\pdfoutput=1
\documentclass[runningheads]{llncs}
\usepackage[T1]{fontenc}
\usepackage{graphicx}
\usepackage{float}
\usepackage{amsmath}
\usepackage{mathtools}
\usepackage{amssymb}
\usepackage{subfig}
\usepackage{tikz}
\usepackage[hidelinks]{hyperref}
\usepackage{microtype}
\usepackage[symbol,perpage]{footmisc}
\usepackage{array}

\usetikzlibrary{matrix}
\definecolor{light}{HTML}{C6DEBD}
\definecolor{dark}{HTML}{218439}
\newcommand{\R}{\text{R}}
\renewcommand{\S}{\text{S}}

\newcommand{\U}{\text{U}}
\newcommand{\V}{\text{V}}
\newcommand{\W}{\text{W}}
\newcommand{\X}{\text{X}}
\newcommand{\Z}{\mathbb{Z}}
\newcommand{\w}{\texttt{w}}
\renewcommand{\d}{\texttt{d}}

\newcommand{\limp}{\mathbin\rightarrow}
\newcommand{\MOLS}[2]{#1\operatorname{MOLS}(#2)}
\newcommand{\0}{\phantom{0}}

\newcommand{\sq}[1]{\parbox[c][1.4em][c]{1.4em}{\centering #1}}
\tikzset{element/.style = {minimum width=1.4em,minimum height=1.4em}}

\renewcommand{\orcidID}[1]{\unskip$^{[\text{#1}]}$}

\begin{document}

\title{Myrvold's Results on Orthogonal Triples of $10\times10$ Latin Squares: A SAT Investigation}

\titlerunning{Myrvold's Results on Orthogonal Triples of $10\times10$ Latin Squares}
\author{Curtis Bright\inst{1,2,3}\orcidID{0000-0002-0462-625X} \and
Amadou Keita\inst{2}\orcidID{0009-0001-5861-4617} \and
Brett Stevens\inst{3}\orcidID{0000-0003-4336-1773}}
\authorrunning{C. Bright et al.}
\institute{School of Computer Science, University of Waterloo,
Canada \\
 \and
Department of Mathematics and Statistics, University of Windsor,
Canada \\
\and
School of Mathematics and Statistics, Carleton University,
Canada \\
\email{cbright@uwaterloo.ca}, \email{keitaa@uwindsor.ca}, \email{brett@math.carleton.ca}
}
\maketitle
\begin{abstract}
Ever since E.~T.~Parker constructed an orthogonal pair
of $10\times10$ Latin squares in 1959,
an orthogonal triple of $10\times10$ Latin squares
has been one of the most sought-after combinatorial designs.
Despite extensive work, the existence of
such an orthogonal triple remains an open
problem, though some negative results are known.
In 1999, W.~Myrvold derived some highly restrictive constraints
in the special case in which
one of the Latin squares in the triple contains a $4\times4$ Latin subsquare.
In particular, Myrvold showed there were twenty-eight possible cases for an
orthogonal pair in such a triple,
twenty of which were removed from consideration.
We implement a computational approach that quickly verifies all of Myrvold's nonexistence results
and in the remaining eight cases finds explicit examples of orthogonal pairs---%
thus explaining for the first time why Myrvold's approach left eight cases unsolved.
As a consequence, the eight remaining cases cannot be removed by a strategy of
focusing on the existence of an orthogonal pair; the third square in the triple must necessarily
be considered as well.

Our approach uses a Boolean satisfiability (SAT) solver to
derive the nonexistence of twenty of the orthogonal pair types
and find explicit examples of orthogonal pairs in the eight remaining cases.
To reduce the existence problem into Boolean logic we
use a duality between the concepts
of \emph{transversal representation} and \emph{orthogonal pair}
and we provide a formulation of this duality in terms of a composition operation on Latin squares.
Using our SAT encoding,
we find transversal representations (and equivalently orthogonal pairs)
in the remaining eight cases in under two hours of computing on a
large computing cluster.
\keywords{Latin square \and orthogonal Latin square \and transversal representation \and satisfiability solving.}
\end{abstract}

\section{Introduction}

A \emph{Latin square} of order~$n$ is an $n\times n$ array filled with $n$ distinct symbols,
usually taken to be $\{0,1,\dotsc,n-1\}$,
such that each symbol appears exactly once in each row and exactly once in each column. 
A \emph{transversal} of a Latin square of order~$n$ consists of~$n$ cells of the square 
chosen so that there is exactly one cell from each row, exactly one cell from each column, and 
exactly~$n$ distinct symbols all together.
There are many ways of representing a transversal, but we follow Myrvold~\cite{myrvold1999negative}
and represent a transversal by listing the symbols in the transversal in each column from left to right.
For example, the highlighted transversal in $\begin{bsmallmatrix}0&\textcolor{red}{\bf1}&2\\\textcolor{red}{\bf2}&0&1\\1&2&\textcolor{red}{\bf0}\end{bsmallmatrix}$
is represented by the row vector $[2,1,0]$.  We call this row vector the transversal's \emph{row representation}.

Two Latin squares $A$ and~$B$ of order~$n$ are said to be \emph{orthogonal} when all~$n^2$ possible symbol pairs
occur when the two squares are superimposed over each other.
This happens exactly when the~$n$ cell positions of the same symbol in $A$
form a transversal in~$B$ (regardless of the symbol chosen), thereby
decomposing~$B$ into~$n$ non-overlapping transversals.
A set of Latin squares that
are pairwise orthogonal to each other are known as \emph{mutually orthogonal Latin squares} (MOLS) and
a set of $k$ MOLS of order~$n$ are known as a $\MOLS{k}{n}$.
For each order $n$, let $N(n)$ denote the largest value of $k$
for which a $\MOLS{k}{n}$ exists.  Determining values of $N(n)$ has a long
history~\cite[Ch.~III]{abel2006mutually} and has been
of intense interest to mathematicians ever since Euler conjectured in 1782
that $N(n)=1$ for $n\equiv2\pmod4$.  It is easily seen that $N(2)=1$,
and Tarry showed in 1900 that $N(6)=1$~\cite{zbMATH02662385}.
However, in 1959, Euler's conjecture
was shown to be false by the discovery of a $\MOLS{2}{22}$~\cite{bose1959falsity}
and a $\MOLS{2}{10}$~\cite{parker1959orthogonal}.  In fact, in 1960
it was shown that $N(n)\geq2$
for all $n>6$~\cite{bose1960further}.
It is also known that $N(n)=n-1$ if and only if a projective plane
of order~$n$ exists.  Projective planes exist for all prime
powers, so the first order for which the value of $N(n)$ is uncertain is
$n=10$.  It is unknown if $N(10)\geq3$, and
determining the value of $N(10)$ is one of the
most prominent unsolved problems concerning MOLS\@.
In particular, finding a $\MOLS{3}{10}$ or proving its
nonexistence is a longstanding open problem in combinatorial design theory.

Although it is not known if a $\MOLS{3}{10}$ exists or not,
there are several special results known about this case.
Mann~\cite{mann1944orthogonal} proved that a $10\times10$ Latin square
with a $5\times5$ Latin subsquare cannot belong to an orthogonal pair, let alone an orthogonal triple.
Parker~\cite{parker1962orthogonal} proved that two orthogonal $10\times10$ Latin squares 
with orthogonal $3\times3$ Latin subsquares cannot be part of an orthogonal triple.
Myrvold~\cite{myrvold1999negative} considered a $10\times10$ Latin square $L$ with a $4\times4$ Latin subsquare.
She showed that it \emph{is} possible for $L$ to be part of an orthogonal pair, and further considered
if $L$ can be part of an orthogonal triple.
Myrvold showed
that orthogonal mates of $L$ can be classified into seven possible mate pattern types.
Furthermore, if $L$ is in an orthogonal triple the other two squares in the triple
can be classified into twenty-eight mate pattern type pairs.
Myrvold ruled out the existence of twenty of the twenty-eight mate pattern type pairs,
and this required only the consideration of constraints arising from two of the three putative squares.
Her work left open the remaining eight cases:
\begin{quote}
\emph{The most obvious next step in extending the current work is to eliminate the remaining eight cases from consideration}.~\cite{myrvold1999negative}
\end{quote}
We provide a reason why Myrvold's method was unable to rule out these eight cases,
and show any argument ruling out these cases must necessarily be more involved---because
orthogonal pairs in the remaining eight cases exist (though
it is unclear if orthogonal \emph{triples} in the remaining eight cases exist).
Thus, any argument ruling out the remaining eight cases must necessarily
involve the triple as a whole, not only two of the three squares.
We give more background on Latin squares
and the formulation of Myrvold's twenty-eight cases in Section~\ref{sec:background}.

Our approach uses a satisfiability (SAT) solver to explicitly construct a $\MOLS{2}{10}$
in each of the eight cases that Myrvold left open.  Additionally, in under a second of compute time
the SAT solver shows the nonexistence of a $\MOLS{2}{10}$ in the twenty cases solved by Myrvold.
To use a SAT solver, it is necessary to reduce the problem of searching
for the object in question to the problem of searching for a satisfying assignment
to a formula in Boolean logic representing Myrvold's framework and cases.

We reduce the problem of finding a $\MOLS{2}{10}$
in each of Myrvold's twenty-eight cases to SAT---see Section~\ref{sec:encoding}
for a description of our encoding.
We develop a SAT encoding of orthogonality that relies
on an equivalence between the orthogonality of Latin squares and what Myrvold calls a
``transversal representation'' Latin square~\cite{myrvold1999negative}.
Myrvold uses this equivalence for \emph{``designing computer programs for exploring squares
and their mates''}.  We provide a precise duality relating these two
concepts via a composition operation on Latin squares and
a generalization of Latin squares where only the columns (and not necessarily the rows)
contain all~$n$ symbols (see Section~\ref{sec:duality}).
This transversal representation encoding allowed
finding a $\MOLS{2}{10}$ for all of Myrvold's previously unsolved cases
in a reasonable amount of computation, even for a single desktop computer.
By exploiting the parallelization ability of a large computing cluster, we were able to solve
the hardest of the eight cases in less than
two hours of real time---see Section~\ref{sec:results} for more details.

\section{Background}\label{sec:background}

We define the notion of transversal representation and relate it
to the orthogonality of Latin squares in Section~\ref{sec:trans_ortho}.
Next, we explain the transversal representation types classified by Myrvold~\cite{myrvold1999negative} in Section~\ref{sec:trans_rep_types}, and
give a brief description of satisfiability solving in Section~\ref{sec:sat}.
Lastly, we give a summary of related work in Section~\ref{sec:related_work},
with a focus on work applying automated reasoning tools to solve problems
related to Latin squares.

\subsection{Transversals and Orthogonality}\label{sec:trans_ortho}

It is well-known that a Latin square of order~$n$ has an orthogonal mate
if and only if it can be decomposed into~$n$ disjoint transversals~\cite{Wanless2011}.
From the~$n$ disjoint transversals, a new Latin square can be formed by writing each transversal in its row representation
and stacking the rows together.
We call such a square a \emph{transversal representation} of the original square.  An
example of a $4\times4$ Latin square~$D$ with four
disjoint transversals and the associated transversal representation $D'$
is provided in Figure~\ref{fig:pairMN}.
The pair $(D,D')$ is known as a \emph{transversal representation pair} or \emph{TRP}.
\begin{figure}
\centering
\begin{tikzpicture}
\matrix (m) [matrix of nodes,nodes={element},column sep=-\pgflinewidth, row sep=-\pgflinewidth, label=left: \text{$D=$} ]{ 
|[draw,fill=black!70!green]| \textcolor{white}{1} &
|[draw,fill=yellow]| \textcolor{black}{2} &
|[draw,fill=white]| \textcolor{black}{0} &
|[draw,fill=light]| \textcolor{black}{3} \\
|[draw,fill=light]| \textcolor{black}{0} &
|[draw,fill=white]| \textcolor{black}{3} &
|[draw,fill=yellow]| \textcolor{black}{1} &
|[draw,fill=black!70!green]| \textcolor{white}{2} \\
|[draw,fill=white]| \textcolor{black}{2} &
|[draw,fill=light]| \textcolor{black}{1} &
|[draw,fill=black!70!green]| \textcolor{white}{3} &
|[draw,fill=yellow]| \textcolor{black}{0} \\
|[draw,fill=yellow]| \textcolor{black}{3} &
|[draw,fill=black!70!green]| \textcolor{white}{0} &
|[draw,fill=light]| \textcolor{black}{2} &
|[draw,fill=white]| \textcolor{black}{1} \\
 };
\end{tikzpicture} 
\begin{tikzpicture}
\matrix (m) [matrix of nodes,nodes={element},column sep=-\pgflinewidth, row sep=-\pgflinewidth, label=left: \text{$D'=$} ]{ 
|[draw,fill=light]| \textcolor{black}{0} &
|[draw,fill=light]| \textcolor{black}{1} &
|[draw,fill=light]| \textcolor{black}{2} &
|[draw,fill=light]| \textcolor{black}{3} \\
|[draw,fill=black!70!green]| \textcolor{white}{1} &
|[draw,fill=black!70!green]| \textcolor{white}{0} &
|[draw,fill=black!70!green]| \textcolor{white}{3} &
|[draw,fill=black!70!green]| \textcolor{white}{2} \\
|[draw,fill=white]| 2 &
|[draw,fill=white]| 3 &
|[draw,fill=white]| 0 &
|[draw,fill=white]| 1 \\
|[draw,fill=yellow]| \textcolor{black}{3} &
|[draw,fill=yellow]| \textcolor{black}{2} &
|[draw,fill=yellow]| \textcolor{black}{1} &
|[draw,fill=yellow]| \textcolor{black}{0} \\
 };
\end{tikzpicture} 
\caption{A transversal representation pair of Latin squares of order four.
Each transversal of $D$ is highlighted in a different colour, and the
row representations of the transversals are given in $D'$.}
\label{fig:pairMN}
\end{figure}

Although we are primarily interested in Latin squares, in the course
of our investigations, we found that it was helpful to consider the
more general case of column-Latin squares.
A \emph{column-Latin} square of order~$n$ is an $n\times n$ array filled with~$n$ distinct symbols
and in which each column contains distinct symbols (and is thus a permutation), but the rows
are not required to contain distinct symbols.
\emph{Row-Latin} squares are defined similarly: the rows of the square must contain distinct
entries, but the columns might not~\cite{laywine1998discrete}.
It follows immediately that an $n\times n$ array filled with~$n$ distinct symbols is a Latin square if and only if it is both
row-Latin and column-Latin.
For our purposes, the usefulness of column-Latin squares stems from the fact that
two column-Latin squares can be composed in a sensible way to form a third column-Latin square which preserves structure related to orthogonality (see Section~\ref{sec:duality}).
Thus, we state most of our results in terms of column-Latin squares.

The concept of orthogonality of Latin squares translates directly to column-Latin squares.
However, the concept of transversal needs some modification.  A generalized transversal
of a column-Latin square of order~$n$ must still be a selection of~$n$ entries
from each row and column, but the entries may not all be distinct.
Figure~\ref{fig:pairMN2} shows an example of this generalization;
note the generalized transversals highlighted in~$D_1$ contain duplicate entries
and therefore are not traditional transversals.  However, the row representation
construction
can still be used to construct the column-Latin square $D'_1$
and we refer to the pair $(D_1,D'_1)$ as a transversal representation pair
of column-Latin squares.
\begin{figure}
\centering
\begin{tikzpicture}
\matrix (m) [matrix of nodes,nodes={element},column sep=-\pgflinewidth, row sep=-\pgflinewidth, label=left: \text{$D_1=$} ]{ 
|[draw,fill=light]| \textcolor{black}{0} &
|[draw,fill=black!70!green]| \textcolor{white}{1} &
|[draw,fill=white]| \textcolor{black}{3} &
|[draw,fill=yellow]| \textcolor{black}{2} \\
|[draw,fill=black!70!green]| \textcolor{white}{1} &
|[draw,fill=yellow]| \textcolor{black}{3} &
|[draw,fill=light]| \textcolor{black}{2} &
|[draw,fill=white]| \textcolor{black}{0} \\
|[draw,fill=yellow]| \textcolor{black}{3} &
|[draw,fill=white]| \textcolor{black}{2} &
|[draw,fill=black!70!green]| \textcolor{white}{1} &
|[draw,fill=light]| \textcolor{black}{1} \\
|[draw,fill=white]| \textcolor{black}{2} &
|[draw,fill=light]| \textcolor{black}{0} &
|[draw,fill=yellow]| \textcolor{black}{0} &
|[draw,fill=black!70!green]| \textcolor{white}{3} \\
};
\end{tikzpicture} 
\begin{tikzpicture}
\matrix (m) [matrix of nodes,nodes={element},column sep=-\pgflinewidth, row sep=-\pgflinewidth, label=left: \text{$D'_1=$} ]{ 
|[draw,fill=light]| \textcolor{black}{0} &
|[draw,fill=light]| \textcolor{black}{0} &
|[draw,fill=light]| \textcolor{black}{2} &
|[draw,fill=light]| \textcolor{black}{1} \\
|[draw,fill=black!70!green]| \textcolor{white}{1} &
|[draw,fill=black!70!green]| \textcolor{white}{1} &
|[draw,fill=black!70!green]| \textcolor{white}{1} &
|[draw,fill=black!70!green]| \textcolor{white}{3} \\
|[draw,fill=white]| 2 &
|[draw,fill=white]| 2 &
|[draw,fill=white]| 3 &
|[draw,fill=white]| 0 \\
|[draw,fill=yellow]| \textcolor{black}{3} &
|[draw,fill=yellow]| \textcolor{black}{3} &
|[draw,fill=yellow]| \textcolor{black}{0} &
|[draw,fill=yellow]| \textcolor{black}{2} \\
 };
\end{tikzpicture} 
\caption{A transversal representation pair of $4\times4$ column-Latin squares.
Note that the highlighted entries of $D_1$ are \emph{not} transversals, but
their row representations when placed in a $4\times4$ array do form a
column-Latin square.}
\label{fig:pairMN2}
\end{figure}

We now give purely logical definitions of orthogonal pair
and transversal representation and state the definitions in a way
that highlights the similarity between the concepts.
Suppose $[a_0,\dotsc,a_{n-1}]$ is a row representing a generalized transversal of a column-Latin square~$B$.
This means if $i$ is a row index, $j$ and~$j'$ are two distinct column indices, and
$B[i,j]=a_j$, then $B[i,j']\neq a_{j'}$ (otherwise, both the $j$th and $j'$th
entries of the generalized transversal are in row~$i$, which is not allowed in any transversal, generalized or not).
Equivalently, if both $B[i,j]=a_j$ and $B[i,j']=a_{j'}$, then the only possibility is that $j=j'$.
This motivates the following definition.

\begin{definition}\label{def:transrep}
Let $A$ and $B$ be order~$n$ column-Latin squares.
Row\/ $i$ of $A$ \emph{represents a transversal} of~$B$ when
$A[i, j] = B[i', j]$ and $A[i, j'] = B[i', j']$ imply $j = j'$.
The square $A$ is said to be a \emph{transversal representation} of\/~$B$ when
each row of $A$ represents a transversal of $B$, i.e.,
for all\/ $0\leq i,i',j,j'<n$,
\[ A[i, j] = B[i', j] \text{ and\/ } A[i, j'] = B[i', j'] \text{ imply } j = j' . \]
\end{definition}

Because Definition~\ref{def:transrep} is symmetric in $A$ and~$B$, $A$ is a transversal representation of $B$
if and only if $B$ is a transversal representation of~$A$.
As before, we say $(A,B)$ is a \emph{transversal representation pair} or \emph{TRP}.

On the other hand, if two column-Latin squares $A$ and $B$ are orthogonal this means that
if $(i,j)$ and $(i',j')$ are two distinct (row, column) pairs then
$(A[i,j],B[i,j])\neq(A[i',j'],B[i',j'])$.  Equivalently, it means that if both
$A[i,j]=A[i',j']$ and $B[i,j]=B[i',j']$, the only possibility is that $(i,j)=(i',j')$.
This motivates the following definition.

\begin{definition}\label{def:ortho}
Let $A$ and $B$ be order~$n$ column-Latin squares.
$A$ is said to be \emph{orthogonal} to $B$ if
for all\/ $0\leq i,i',j,j'<n$,
\[ A[i,j] = A[i',j'] \text{ and\/ } B[i,j] = B[i',j'] \text{ imply } j = j' . \]
\end{definition}

Note that the equality of $j$ and~$j'$ in Definition~\ref{def:ortho} also implies the equality of $i$ and~$i'$
because $A$ and~$B$ are column-Latin squares.
The consequent in Definition~\ref{def:ortho} thus could equivalently have
been written as the more typical
$(i,j)=(i',j')$, but we use the simpler $j=j'$
in order to highlight the striking similarity between Definitions~\ref{def:transrep} and~\ref{def:ortho}.

\subsection{Transversal Representation Types}\label{sec:trans_rep_types}

We now review Myrvold's results~\cite{myrvold1999negative}
on the possible transversal representation types of a $10\times10$ Latin square~$L$
containing a $4\times4$ Latin subsquare~$\Omega$.
Without loss of generality, we assume the subsquare appears in the bottom-right of~$L$,
i.e., in the rows and columns labeled 6 to 9.  We also assume
$L$ consists of the symbols from the set $\{0,1,2,\dotsc,9\}$
and $\Omega$ consists of symbols from the set $\{0,1,2,3\}$.
We partition the other regions of~$L$ into~$\Delta$ (lower-left), $\Gamma$ (upper-right), and $\Sigma$ (upper-left) as shown in Figure~\ref{fig:decomp}.
Since the subsquare $\Omega$ is a Latin square containing symbols from the set $\{0,1,2,3\}$, the rectangles $\Delta$ and~$\Gamma$
must take symbols only from the set $\{4,5,6,\dotsc,9\}$ and each row and column of $\Sigma$ must contain exactly $6-4=2$ symbols
from the set $\{4,5,6,\dotsc,9\}$.

\begin{figure}
  \subfloat{
	\begin{minipage}[c][1\width]{
	   0.48\textwidth}
	   \centering
	  \begin{tikzpicture}[element/.style={minimum width=0.5cm,minimum height=0.5cm}]
\matrix (m) [matrix of nodes,nodes={element},column sep=-\pgflinewidth, row sep=-\pgflinewidth, ampersand replacement=\&]{ 
|[draw,fill=white ]| \& |[draw,fill=white ]| \& |[draw,fill=white ]| \& |[draw,fill=white ]| \& |[draw,fill=black!70!green]| \& |[draw,fill=black!70!green]| \& |[draw,fill=light ]| \& |[draw,fill=light ]| \& |[draw,fill=light ]| \& |[draw,fill=light ]|\\
|[draw,fill=white ]| \& |[draw,fill=white ]| \& |[draw,fill=white ]| \& |[draw,fill=white ]| \& |[draw,fill=black!70!green]| \& |[draw,fill=black!70!green]| \& |[draw,fill=light ]| \& |[draw,fill=light ]| \& |[draw,fill=light ]| \& |[draw,fill=light ]| \\
|[draw,fill=white ]| \& |[draw,fill=white ]| \& |[draw,fill=white ]| \& |[draw,fill=white ]| \& |[draw,fill=black!70!green]| \& |[draw,fill=black!70!green]| \& |[draw,fill=light ]| \& |[draw,fill=light ]| \& |[draw,fill=light ]| \& |[draw,fill=light ]| \\
|[draw,fill=white ]| \& |[draw,fill=white ]| \& |[draw,fill=white ]| \& |[draw,fill=white ]| \& |[draw,fill=black!70!green]| \& |[draw,fill=black!70!green]| \& |[draw,fill=light ]| \& |[draw,fill=light ]| \& |[draw,fill=light ]| \& |[draw,fill=light ]| \\
|[draw,fill=white ]| \& |[draw,fill=white ]| \& |[draw,fill=white ]| \& |[draw,fill=white ]| \& |[draw,fill=black!70!green]| \& |[draw,fill=black!70!green]| \& |[draw,fill=light ]| \& |[draw,fill=light ]| \& |[draw,fill=light ]| \& |[draw,fill=light ]| \\
|[draw,fill=white ]| \& |[draw,fill=white ]| \& |[draw,fill=white ]| \& |[draw,fill=white ]| \& |[draw,fill=black!70!green]| \& |[draw,fill=black!70!green]| \& |[draw,fill=light ]| \& |[draw,fill=light ]| \& |[draw,fill=light ]| \& |[draw,fill=light ]|\\
|[draw,fill=light ]| \& |[draw,fill=light ]| \& |[draw,fill=light ]| \& |[draw,fill=light ]| \& |[draw,fill=light ]| \& |[draw,fill=light ]| \& |[draw,fill=white ]| \& |[draw,fill=white ]| \& |[draw,fill=white ]| \& |[draw,fill=white ]|\\
|[draw,fill=light ]| \& |[draw,fill=light ]| \& |[draw,fill=light ]| \& |[draw,fill=light ]| \& |[draw,fill=light ]| \& |[draw,fill=light ]| \& |[draw,fill=white ]| \& |[draw,fill=white ]| \& |[draw,fill=white ]| \& |[draw,fill=white ]|\\
|[draw,fill=light ]| \& |[draw,fill=light ]| \& |[draw,fill=light ]| \& |[draw,fill=light ]| \& |[draw,fill=light ]| \& |[draw,fill=light ]| \& |[draw,fill=white ]| \& |[draw,fill=white ]| \& |[draw,fill=white ]| \& |[draw,fill=white ]| \\
|[draw,fill=light ]| \& |[draw,fill=light ]| \& |[draw,fill=light ]| \& |[draw,fill=light ]| \& |[draw,fill=light ]| \& |[draw,fill=light ]| \& |[draw,fill=white ]| \& |[draw,fill=white ]| \& |[draw,fill=white ]| \& |[draw,fill=white ]| \\
 };
\draw[line width=4pt, black, -] (m-1-7.north west) -- (m-10-7.south west);
\draw[thick,black] (1.5,-1.5) node[transform shape, scale=3] {$\Omega$};
\draw[thick,black] (-1.0,-1.5) node[transform shape, scale=3] {$\Delta$};
\draw[thick,black] (1.5,1.0) node[transform shape, scale=3] {$\Gamma$};
\draw[thick,black] (-1.0,1.0) node[transform shape, scale=3] {$\Sigma$};
\end{tikzpicture}
	\end{minipage}}
 \hfill 	
  \subfloat{
	\begin{minipage}[c][1\width]{
	   0.48\textwidth}
	   \centering
	   \begin{tikzpicture}[element/.style={minimum width=0.5cm,minimum height=0.5cm}]
\matrix (m) [matrix of nodes,nodes={element},column sep=-\pgflinewidth, row sep=-\pgflinewidth, ampersand replacement=\&]{ 
$p_0$: \& |[draw,fill=white ]| \& |[draw,fill=white ]| \& |[draw,fill=white ]| \& |[draw,fill=white ]| \& |[draw,fill=light ]| \& |[draw,fill=light ]| \& |[draw,fill=light ]| \& |[draw,fill=light ]| \& |[draw,fill=light ]| \& |[draw,fill=light ]|\\
 };
\draw[line width=4pt, black, -] (m-1-7.north east) -- (m-1-7.south east);
\end{tikzpicture} 
	   \begin{tikzpicture}[element/.style={minimum width=0.5cm,minimum height=0.5cm}]
\matrix (m) [matrix of nodes,nodes={element},column sep=-\pgflinewidth, row sep=-\pgflinewidth, ampersand replacement=\&]{ 
$p_1$: \& |[draw,fill=white ]| \& |[draw,fill=white ]| \& |[draw,fill=white ]| \& |[draw,fill=light ]| \& |[draw,fill=light ]| \& |[draw,fill=light ]| \& |[draw,fill=white ]| \& |[draw,fill=light ]| \& |[draw,fill=light ]| \& |[draw,fill=light ]| \\
 };
\draw[line width=4pt, black, -] (m-1-7.north east) -- (m-1-7.south east);
\end{tikzpicture} 
	   \begin{tikzpicture}[element/.style={minimum width=0.5cm,minimum height=0.5cm}]
\matrix (m) [matrix of nodes,nodes={element},column sep=-\pgflinewidth, row sep=-\pgflinewidth, ampersand replacement=\&]{ 
$p_2$: \& |[draw,fill=white ]| \& |[draw,fill=white ]| \& |[draw,fill=light ]| \& |[draw,fill=light ]| \& |[draw,fill=black!70!green]| \& |[draw,fill=black!70!green]| \& |[draw,fill=white ]| \& |[draw,fill=white ]| \& |[draw,fill=light ]| \& |[draw,fill=light ]| \\
 };
\draw[line width=4pt, black, -] (m-1-7.north east) -- (m-1-7.south east);
\end{tikzpicture} 
	   \begin{tikzpicture}[element/.style={minimum width=0.5cm,minimum height=0.5cm}]
\matrix (m) [matrix of nodes,nodes={element},column sep=-\pgflinewidth, row sep=-\pgflinewidth, ampersand replacement=\&]{ 
$p_3$: \& |[draw,fill=white ]| \& |[draw,fill=light ]| \& |[draw,fill=black!70!green]| \& |[draw,fill=black!70!green]| \& |[draw,fill=black!70!green]| \& |[draw,fill=black!70!green]| \& |[draw,fill=white ]| \& |[draw,fill=white ]| \& |[draw,fill=white ]| \& |[draw,fill=light ]| \\
 };
\draw[line width=4pt, black, -] (m-1-7.north east) -- (m-1-7.south east);
\end{tikzpicture} 
	   \begin{tikzpicture}[element/.style={minimum width=0.5cm,minimum height=0.5cm}]
\matrix (m) [matrix of nodes,nodes={element},column sep=-\pgflinewidth, row sep=-\pgflinewidth, ampersand replacement=\&]{ 
$p_4$: \& |[draw,fill=black!70!green]| \& |[draw,fill=black!70!green]| \& |[draw,fill=black!70!green]| \& |[draw,fill=black!70!green]| \& |[draw,fill=black!70!green]| \& |[draw,fill=black!70!green]| \& |[draw,fill=white ]| \& |[draw,fill=white ]| \& |[draw,fill=white ]| \& |[draw,fill=white ]| \\
 };
\draw[line width=4pt, black, -] (m-1-7.north east) -- (m-1-7.south east);
\end{tikzpicture}  \\
	\end{minipage}}
	\caption{The Latin square $L$ (left) and its possible transversal types (right).
	White cells represent symbols in $\{0,1,2,3\}$, light cells represent
	symbols in the rectangles $\Delta$ and $\Gamma$, 
	and dark cells represent the symbols $\{4,5,\dotsc,9\}$ in $\Sigma$.
	The cells of $\Sigma$ are not shown in
	absolute positions; in actuality, each row and column of $\Sigma$ has exactly two
	dark cells.  Similarly, the transversal types are shown up to a permutation
	of the first six entries and the last four entries.}
    \label{fig:decomp}
\end{figure}

Suppose the cells with symbols in $\{0,1,2,3\}$ are coloured white.
A transversal of $L$ can be of five possible forms depending on
how many white cells it takes from the Latin subsquare~$\Omega$.
A transversal containing $i$ white cells from $\Omega$ (i.e., in its last four columns)
is said to be of form $p_i$ (see Figure~\ref{fig:decomp}).  
Since any transversal will contain
exactly four white cells in total, it must contain $4-i$ white cells
in its first six columns. Consider the entries of $p_i$ that were chosen from the first six rows of~$L$ (i.e., $\Sigma$ or $\Gamma$).
We have $4-i$ white entries (all from $\Sigma$) and $4-i$
entries from the last four columns of $L$ (i.e., from $\Gamma$), so there are $6-2(4-i)=2i-2$ remaining entries.
The only possibilities for these are the nonwhite entries of $\Sigma$, and we colour these entries dark.
This results in the following lemma.
\begin{lemma}[{\cite[Lemma~3.1]{myrvold1999negative}}]\label{lem:darkcount}
A transversal of type $p_i$ contains exactly $2i-2$ dark entries.
\end{lemma}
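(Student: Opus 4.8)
The plan is to prove the lemma by a direct counting argument that tracks how the ten cells of a transversal $T$ of type $p_i$ are distributed among the four regions $\Sigma$, $\Gamma$, $\Delta$, and $\Omega$. Two structural facts will do all the work. First, since $T$ contains each of the ten symbols exactly once, precisely four of its cells are white (carry a symbol from $\{0,1,2,3\}$), and these white cells can only lie in $\Omega$ or in $\Sigma$, because $\Delta$ and $\Gamma$ contain symbols from $\{4,\dots,9\}$ only. Second, as $T$ meets each row and each column exactly once, it places exactly four cells in the last four columns (those meeting $\Gamma$ and $\Omega$) and six cells in the first six columns, and dually four cells in the last four rows and six cells in the first six rows.

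First I would analyse the last four columns. Each of the four cells there lies in $\Omega$ or $\Gamma$, and by definition of type $p_i$ exactly $i$ of them lie in $\Omega$; hence exactly $4-i$ lie in $\Gamma$. Since the $\Gamma$ cells are light, all of $T$'s white cells in the last four columns are its $\Omega$ cells, which number $i$. The remaining $4-i$ white cells of $T$ therefore cannot lie in $\Delta$, $\Gamma$, or $\Omega$, and so must all lie in $\Sigma$.

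Next I would count the cells of $T$ in the first six rows, which can only meet $\Sigma$ and $\Gamma$. There are six such cells, and the $\Gamma$ cells among them are exactly the $4-i$ cells already identified. This is the one point requiring care, since $\Gamma$ sits simultaneously in the first six rows and the last four columns and is thus the quantity linking the two counts. It follows that $T$ has $6-(4-i)=2+i$ cells in $\Sigma$, of which $4-i$ are white; the rest are dark, giving $(2+i)-(4-i)=2i-2$ dark entries. Equivalently, and matching the phrasing in the surrounding text, the six first-six-row cells split into $4-i$ white $\Sigma$ cells and $4-i$ $\Gamma$ cells, leaving $6-2(4-i)=2i-2$ cells that can only be the dark (nonwhite) entries of $\Sigma$.

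I do not anticipate a genuine obstacle here: the statement is ultimately a bookkeeping identity, and the only place to be careful is the shared role of $\Gamma$ noted above, which is precisely what forces the row-based and column-based counts to agree. As a sanity check, the formula returns $2i-2=-2$ for $i=0$, correctly signalling that type $p_0$ admits no transversal of $L$ at all, while for $i=1,\dots,4$ it yields the nonnegative values $0,2,4,6$.
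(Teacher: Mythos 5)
Your argument is correct and is essentially the paper's own proof: both count the four white cells of the transversal, place $i$ of them in $\Omega$ and the remaining $4-i$ in $\Sigma$, then count the six transversal cells in the first six rows as $4-i$ from $\Gamma$ plus $4-i$ white from $\Sigma$, leaving $6-2(4-i)=2i-2$ dark entries. Your extra care about $\Gamma$ lying in both the first six rows and the last four columns is exactly the implicit linking step in the paper's version, so there is nothing further to add.
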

A simple corollary of Lemma~\ref{lem:darkcount} is that $p_0$ is not a possible type,
as it would have to contain $-2$ dark entries.

Let $n_i$ be the number of transversals of type $p_i$ in a transversal representation of $L$. Simple counting arguments give that the values $\{n_1,n_2,n_3,n_4\}$ satisfy the following Diophantine linear system.
\begin{align*}
    n_i&\geq 0 && \text{ nonnegativity of the counts,} \\
    n_1 + n_2 + n_3 + n_4 &= 10 && \text{ ten total transversals,} \\
    n_1 + 2n_2 + 3n_3 + 4n_4 &= 16 && \text{ sixteen total symbols in $\Omega$.}
\end{align*}
There are seven possible solutions to this linear system and correspondingly seven transversal representation types of~$L$.
These types are denoted R, S, T, U, V, W, and X by Myrvold.
Table~\ref{tbl:transtypes} gives the transversal type counts of each case.

\begin{table}
\caption{A summary of Myrvold's seven possible transversal types of $L$.}\label{tbl:transtypes}%
\begin{center}
\begin{tabular}{ccccc}
Type & $n_1$ & $n_2$ & $n_3$ & $n_4$ \\ \hline
R    &8 &0 &0 &2 \\ 
S    &7 &0 &3 &0 \\ 
T    &7 &1 &1 &1 \\ 
U    &6 &2 &2 &0 \\ 
V    &6 &3 &0 &1 \\ 
W    &5 &4 &1 &0 \\ 
X    &4 &6 &0 &0 
\end{tabular}
\end{center}\vspace{-1em}
\end{table}

Up to ordering, there are $\binom{7}{2}=21$ ways of choosing a pair with two different types,
and $7$ ways of choosing a pair with matching types,
for a total of $28$ possible transversal representation
pair combinations.  Under the assumption that~$L$ is part of an orthogonal triple,
Myrvold~\cite[Thm 4.4]{myrvold1999negative} showed that the only possible pair types that could potentially be transversal representations
of~$L$ simultaneously are
$(\S,\X)$, $(\U,\U)$, $(\U,\W)$, $(\U,\X)$, $(\V,\X)$, $(\W,\W)$, $(\W,\X)$, and $(\X,\X)$.

\subsection{Satisfiability Solving}\label{sec:sat}

In this section, we provide some basic preliminaries on Boolean logic
and satisfiability (SAT) solving.  A \emph{SAT solver} is a program that can determine
if a Boolean logic formula can be \emph{satisfied}---that is, if there is a truth
assignment under which the formula becomes true.
In practice, the formulas provided to SAT solvers must be written in conjunctive normal form (CNF)\@.
Formulas in CNF only contain the Boolean connective operators $\land$ (and), $\lor$ (or), and $\lnot$ (not).
These operators have meanings similar to those in everyday English:
the formula $x\land y$ is true if and only if both $x$ and $y$ are true;
the formula $x\lor y$ is true if and only if $x$ or $y$ (or both) are true;
and the formula $\lnot x$ is true if and only if $x$ is false.

A \emph{literal} is a Boolean variable or its negation, i.e., a formula of the form $x$ or $\lnot x$ where $x$ is a Boolean variable.
A \emph{clause} is a disjunction of literals, i.e., a formula of the form $l_1\lor\dotsb\lor l_k$ where $l_1$, $\dotsc$, $l_k$ are literals.
Finally, a formula is in \emph{conjunctive normal form} when it is a conjunction of clauses, i.e., a formula of the form $c_1\land\dotsb\land c_k$ where $c_1$, $\dotsc$, $c_k$ are clauses.

When $A$ is a conjunction of literals and $B$ is a disjunction of literals, we use the notation $A\limp B$
as shorthand for $\lnot A\lor B$.  By basic logic equivalences, the formula $(\lnot\bigwedge_i a_i)\lor\bigvee_i b_i$
is equivalent to $\bigvee_i\lnot a_i\lor\bigvee_i b_i$, which (after applying the simplification $\lnot\lnot x\equiv x$
to any doubly negated literal) is a clause.  Thus, we consider the notation $A\limp B$ to be shorthand for a clause
when $B$ is a clause and $A$ is a conjunction of literals.

Although there is no guarantee that SAT solvers can solve the SAT problem in
a feasible amount of time, modern SAT solvers are highly effective at solving
many kinds of problems arising in practice~\cite{Vardi2014},
including mathematical problems such as the Boolean Pythagorean triples
problem~\cite{Heule2016} and Lam's problem
of proving the nonexistence of a projective plane of order ten~\cite{bright2021sat}.
Although these problems at first seem unconnected to logic, they can
be reduced to SAT due to the versatility of Boolean logic~\cite{bright2020effective}.
Another advantage of using a SAT solver is that they offer
a higher amount of confidence in a computational search.  It is typically less error-prone
to write a SAT encoding than it is to write optimized search code, and moreover, the
SAT solver itself does not need to be trusted because it produces a proof certificate
which can be later checked by simpler and independently-written software.
This is particularly
relevant when purporting to demonstrate
the \emph{nonexistence} of a mathematical object, such as in
Lam's problem of
proving projective planes of order ten do not exist~\cite{Lam1991}.

Lam's problem was resolved in 1989 using a massive computer search
by Lam, Thiel, and Swiercz~\cite{lam1989non}.
In 2011, the search was independently performed by Roy~\cite{roy2011confirmation}.
Although these works are amazing achievements, they both crucially rely on highly optimized
computer code that is essentially impossible to verify for correctness, and the programmers
of the search code were upfront that the code may contain bugs.  Indeed,
discrepancies in the results of these searches were later found: a SAT-based search of Bright et al.~\cite{bright2021sat} found
inconsistencies in the intermediate counts provided by Lam et al., implying a small
number of missing subcases in the proof.
Also, the independent
confirmation of Roy~\cite{roy2011confirmation} was based in part on the nonexistence
of a partial projective plane later determined to actually exist~\cite{Bright2020b}.
There is no formal proof that Bright et al.'s SAT-based resolution
of Lam's problem is without error---because the SAT encoding itself is unverified---%
but it does have the advantage that no \emph{search code} has to be trusted.

\subsection{Related Work}\label{sec:related_work}

Extensive searches for a $\MOLS{3}{10}$
have been performed, and some important cases have been ruled out.  For example,
it is known that any such triple must only contain Latin squares with trivial symmetry groups~\cite{mckay2007small}.
Independent computer searches~\cite{bright2021sat,lam1989non,roy2011confirmation}
have revealed that there is no projective plane of order ten,
and because a projective plane of order $n$ is equivalent to a $\MOLS{(n-1)}{n}$~\cite{Bose1938,Moore1896},
these searches imply that no $\MOLS{9}{10}$s exist or equivalently that $N(10)<9$.
Together with a result of Bruck~\cite{bruck1963finite}, this implies
that $N(10)\leq 6$ which is currently the best upper bound known on $N(10)$.

Egan and Wanless~\cite{Egan2015} enumerate MOLS of small orders, providing counts of 
orthogonal mates and classifications up to various equivalence notions for orders $n \leq 9$. 
They also present a set of three Latin squares $L_1$, $L_2$, $L_3$ of order 10 that is the closest known to forming a 
complete set of MOLS: $L_1$ is orthogonal to both $L_2$ and $L_3$,
and $91$ out of the $100$ symbol pairs are different when $L_2$ and $L_3$ are superimposed.
They also showed that $L_2$ and $L_3$ have seven common disjoint transversals.

Numerous studies have leveraged SAT solving, integer programming, and constraint programming
in order to search for Latin squares of various forms.
Appa, Magos, and Mourtos~\cite{appa2006searching,appa2002integrating} integrated integer programming and 
constraint programming to tackle the problem of searching for mutually orthogonal Latin squares.
Their comparative study against traditional constraint and integer programming algorithms revealed the 
effectiveness of combining integer and constraint programming in searching for $\MOLS{2}{n}$ for $n\leq12$ and 
$\MOLS{3}{n}$ for $n\leq9$.
Rubin et al.~\cite{rubin2021integer}
formulated a symmetry breaking method and also provided an alternative constraint programming
encoding based on a theorem of Mann~\cite{mann1942construction}
which performed much better in their search for pairs of orthogonal Latin squares.
The SAT encoding that we use in our work can be viewed as a
reformulation of their constraint programming encoding
into Boolean satisfiability.

Ma and Zhang~\cite{ma2013finding} use a general-purpose model searching program 
to find MOLS\@. 
They show a $\MOLS{k}{n}$ exists if and only if there exists a Latin square of order $n$ which has $k - 1$ transversal matrices $T_1$, $\dotsc$, $T_{k-1}$ with any two transversal matrices $T_i$ and $T_j$ ($i \neq j$)
being transversal matrices of each other~\cite[Prop~1]{ma2013finding}.
As a result, instead of searching for $\MOLS{k}{n}$, 
they searched for $k$ Latin squares $L$, $T_1$, $\dotsc$, $T_{k-1}$
that are mutual transversal matrices of each other.
The initial Latin square $L$ was defined as a function $f \colon \mathcal{R} \times \mathcal{C} \to \mathcal{D}$
on row indices~$\mathcal{R}$, column indices $\mathcal{C}$, and symbol set $\mathcal{D}$.
Similarly, the $i$th transversal matrix $T_i$ ($1 \leq i \leq k-1$) was defined as a function
$f_i \colon \mathcal{D}_i \times \mathcal{C} \to \mathcal{R}$,
where $\mathcal{D}_i$ is the symbol set of $L_i$, the Latin square represented by the transversal matrix $T_i$.
The formulae they used for encoding a $\MOLS{k}{n}$ then consist of three types:
\begin{enumerate}
\item Formulae to specify that $f$ and $f_i$ are Latin squares:
	\begin{align*}
	f(x_1, y) = f(x_2, y) &\limp x_1 = x_2, &
	f(x, y_1) = f(x, y_2) &\limp y_1 = y_2, \\
	f_i(t_1, y) = f_i(t_2, y) &\limp t_1 = t_2, &
	f_i(t, y_1) = f_i(t, y_2) &\limp y_1 = y_2.
	\end{align*}
\item Formulae to specify that $f_i$ is a transversal matrix of $f$:
 \[ f(f_i(t, y_1), y_1) = f(f_i(t, y_2), y_2) \limp y_1 = y_2.\]
\item Formulae to ensure that $L_i$ and $L_j$ are orthogonal
by stating that $T_i$ and $T_j$ are a transversal representation pair:
\[ \bigl(f_i(t_1, y_1) = f_j (t_2, y_1) \land f_i(t_1, y_2) = f_j(t_2, y_2)\bigr) \limp y_1 = y_2 . \]
\end{enumerate}
Our encoding of a transversal representation pair uses formulae that are similar to their first two types,
though our encoding is purely represented as a Boolean satisfiability
problem which does not natively support expressions like $f(f_i(t, y_1), y_1)$.
Constraints of type~3 could theoretically be replaced by constraints like
those of type~2 (e.g., $f_i(f_j(t, y_1), y_1) = f_i(f_j(t, y_2), y_2) \limp y_1 = y_2$),
though it is unclear if this encoding variant was tried by Ma and Zhang.
Our experience suggests that (at least for a SAT solver)
it is preferable to encode
a transversal representation pair using constraints of type~2
instead of constraints of type~3.

A Latin square that is orthogonal to its transpose is known as \emph{self-orthogonal}
and if it is additionally orthogonal to its anti-diagonal transpose it
is known as \emph{doubly self-orthogonal}.
For orders $n\equiv2\pmod4$, the existence of doubly self-orthogonal Latin squares is unknown for $n > 10$.
In 2011, Lu et~al.~\cite{lu2011searching} proved the nonexistence of a doubly self-orthogonal Latin square of order ten.
They encoded the existence of a doubly self-orthogonal Latin square of order ten as a SAT problem
and proved the nonexistence by showing the resulting SAT instance was unsatisfiable.
To describe their encoding, let $A$ be a self-orthogonal Latin square of order $n$, let $A^T$ denote the transpose of $A$,
and let $A^*$ denote the transpose across the anti-diagonal of $A$, i.e.,  
\( A^T[x, y] = A[y, x] \) and \( A^*[x, y] = A[n - 1 - y, n - 1 - x] \) where $0\leq x,y<n$.
In addition to the properties of a Latin square, they generated the constraints
\begin{align*}
&(A[x_1, y_1] = A[x_2, y_2] \land A[y_1, x_1] = A[y_2, x_2]) \\
&\quad\limp (x_1 = x_2 \land y_1 = y_2), \quad\text{i.e., orthogonality of $A$ and $A^T$, and } \\
&(A[x_1, y_1] = A[x_2, y_2] \land A[n - 1 - y_1, n - 1 - x_1] = A[n - 1 - y_2, n - 1 - x_2]) \\
&\quad\limp (x_1 = x_2 \land y_1 = y_2), \quad\text{i.e., orthogonality of $A$ and $A^*$}.
\end{align*}

A \emph{Costas array} of order $n$ is an $n\times n$ grid with $n$ dots and $n^2 - n$ empty cells,
with one dot in every row and column, and with
no two dots sharing the same relative horizontal, vertical, or diagonal displacement.
A \emph{Costas Latin square} is a Latin square in which the cells for each symbol form a Costas array;
see Figure~\ref{fig:costas} for an example.
\begin{figure}\centering
\begin{tikzpicture}
\matrix (m) [matrix of nodes,nodes={element,draw},column sep=-\pgflinewidth, row sep=-\pgflinewidth]{ 
0 & 1 & 2 & 3 \\
1 & 0 & 3 & 2 \\
3 & 2 & 1 & 0 \\
2 & 3 & 0 & 1 \\
};
\end{tikzpicture}
\caption{An example $4\times4$ Costas Latin square.}\label{fig:costas}
\end{figure}
Jin et al.~\cite{jin2021investigating} used SAT solvers to search for Costas Latin squares.
They established new existence and nonexistence results for various types of Costas Latin squares of even orders $n \leq 10$
including orthogonal pairs of Costas Latin squares.
In their encoding, they define from the square~$A$ a new square $\mathit{TA}$ by the rule $A[i,j]=k\rightarrow\mathit{TA}[k,j]=i$.
This makes $\mathit{TA}$
the \emph{$(3,2,1)$-parastrophe} of~$A$ (the Latin square obtained by swapping the meaning of rows and symbols),
though they refer to $\mathit{TA}$ as a transversal matrix.
To encode orthogonality of $(A,B)$, they impose the constraints
\[ x \neq y \limp (\mathit{TA}[u, x] \neq \mathit{TB}[v, x] \lor \mathit{TA}[u, y] \neq \mathit{TB}[v, y]) \quad \text{for $0 \leq x,y,u,v < n$}. \]
The $(3,2,1)$-parastrophe is also called the \emph{column inverse} since it can also be obtained
by treating each column as a permutation of $[0,\dotsc,n-1]$ and replacing each column with
its inverse~\cite{keedwell}.
In the rest of this paper, we will use the notation $A^{-1}$ for the column inverse of $A$ (see Section~\ref{sec:composition}).

A Latin square of order~$n$ is \emph{idempotent} when its diagonal consists of the
entries $0$, $1$, $\dotsc$, $n-1$ in order, and is \emph{symmetric} when it is equal to its
own transpose.
A \emph{golf design} of order $n$ is a collection of $n - 2$ idempotent symmetric Latin squares of order~$n$ that are mutually disjoint,
meaning that any two Latin squares in the collection share no common symbols in any cell (except for the cells along their diagonals).
Two golf designs are \emph{orthogonal} if every Latin square in one design has an orthogonal mate in the other design.

Huang et al.~\cite{Huang2019} investigated the existence of orthogonal golf designs via constraint programming
and satisfiability testing.
They reformulated the orthogonal mate finding problem as a transversal finding problem.
They constructed the transversal matrix $T$ of a Latin square $L$ with the constraints
\[ \left( y_1 = y_2 \lor L[T[x, y_1],y_1] \neq L[T[x, y_2],y_2]\right) \quad\text{for $0\leq x,y_1,y_2<n$} , \]
and additionally used constraints specifying that $T$ is a Latin square.

Latin squares are known as \emph{diagonal} if they feature distinct symbols along both the main and back diagonals. 
Zaikin and Kochemazov~\cite{oleg2015search} constructed SAT encodings to discover pairs of orthogonal 
diagonal Latin squares of order ten and pseudotriples of orthogonal diagonal Latin squares. A \emph{pseudotriple} 
refers to a set of three Latin squares that nearly form an orthogonal triple, but the orthogonality
condition is only required to hold on a subset of the cells of the Latin squares.  They discovered a
triple of diagonal Latin squares of order ten for which the orthogonality condition holds across 73 cells
(the same 73 cells in each Latin square in the triple).

An \emph{extended self-orthogonal diagonal Latin square} is a diagonal Latin square
that is orthogonal to a diagonal Latin square in its main class---the
\emph{main class} of a Latin square being the set of Latin squares produced by
application of row permutations, column permutations, symbol permutations,
or interchanging the roles of rows, columns, and symbols.  Extended self-orthogonal
diagonal Latin squares generalize the notion of self-orthogonal diagonal Latin squares,
since the transpose of a Latin square is always a member of its main class
(obtained by interchanging the roles of rows and columns).  Zaikin, Vatutin, and
Bright~\cite{Zaikin2025} use a SAT solver to enumerate all extended self-orthogonal diagonal Latin squares
up to order ten and show that in order ten no such squares are part of an orthogonal triple.
Their SAT encoding for orthogonality is based off of the one we present in this paper
relying on a consequence of Mann's theorem described in Section~\ref{sec:composition}.

In a separate recently published paper~\cite{Bright2025}, we use a SAT encoding for orthogonality
based on the one described in Section~\ref{sec:TRPC} in order to enumerate
$\MOLS{2}{10}$ whose incidence matrices have at least two nontrivial linear
dependencies.  This enumeration had been previously completed using custom-written search code
of Delisle~\cite{delisle2010search} and was motivated by work of Dukes and Howard~\cite{Dukes2012}
which classified the kinds of linear dependencies that could occur in the incidence
matrix of a hypothetical set of $\MOLS{4}{10}$.  Dukes and Howard also showed that the incidence matrix
of a $\MOLS{4}{10}$ must have at least two nontrivial linear dependencies.
Based on a later computational search of Gill and Wanless~\cite{Gill2023}, it is now known
that the incidence matrix of any pair of squares in a $\MOLS{3}{10}$ must only have trivial
linear dependencies.  Consequently, the rank of the linear code generated
by any pair of squares in a $\MOLS{3}{10}$ must be exactly 37.

\section{Composition and Duality}\label{sec:duality}

In this section, we describe a duality between the concepts of
orthogonality and transversal representation.  First, in Section~\ref{sec:composition}
we define a composition operation on column-Latin squares.
Then in Section~\ref{sec:op_tr} we use
the composition operation to concisely characterize the duality.

\subsection{Composition of Column-Latin Squares}\label{sec:composition}

A column-Latin square of order $n$ can be represented by $(c_0, c_1, \dotsc, c_{n-1})$ where~$c_j$ is the permutation of $[0,\dotsc,n-1]$ formed by the $j$th column.
For any two permutations $f$ and~$g$ on the same set, the \emph{composition} $fg$ is another permutation where $(fg)(i) = f(g(i))$, i.e., applying $g$ then $f$.
The composition of two column-Latin squares $F=(f_0, \dotsc, f_{n-1})$ and $G=(g_0, \dotsc, g_{n-1})$ is defined as 
\[ FG = (f_0g_0,\dotsc,f_{n-1}g_{n-1}). \]
The $(i,j)$th entry of $FG$ is then $f_jg_j(i)=F[G[i,j],j]$.
The \emph{column inverse} of a column-Latin square~$F$, denoted $F^{-1}$,
is the column-Latin square in which each column is the inverse permutation of the corresponding column of~$F$. 

Let $e$ denote the identity column permutation with $e(i)= i$ for $0\leq i<n$ and
$E = (e,\dotsc,e)$ the column-Latin square of order $n$ formed by $n$ copies of $e$.
The following two lemmas appear in Laywine and Mullen~\cite[pp.~98--99]{laywine1998discrete},
except stated in terms of row-Latin squares instead of column-Latin squares.

\begin{lemma}\label{A and E}
Let\/ $C$ be a column-Latin square. Then $(C,E)$ is an orthogonal pair if and only if\/ $C$ is a Latin square.
\end{lemma}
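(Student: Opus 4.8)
The statement asserts an equivalence, so I would prove the two implications separately. Recall that $E=(e,\dotsc,e)$ has $E[i,j]=e(i)=i$ for every column $j$; that is, the symbol in row $i$ of every column of $E$ is simply $i$. The hypothesis is that $C$ is column-Latin, so each of its columns is already a permutation and distinctness within each column is automatic. The only thing that can fail to make $C$ a Latin square is the row condition: whether each row of $C$ contains $n$ distinct symbols. So the crux of the equivalence should be that orthogonality of $(C,E)$ is exactly the statement that the rows of $C$ are distinct, i.e.\ that $C$ is row-Latin, which for an $n\times n$ array of $n$ symbols is equivalent (as noted in Section~\ref{sec:trans_ortho}) to $C$ being Latin.

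\textbf{Unwinding the orthogonality condition.}
First I would apply Definition~\ref{def:ortho} to the pair $(C,E)$. Orthogonality says: for all $0\leq i,i',j,j'<n$, if $C[i,j]=C[i',j']$ and $E[i,j]=E[i',j']$, then $j=j'$. Since $E[i,j]=i$ and $E[i',j']=i'$, the second antecedent $E[i,j]=E[i',j']$ is simply $i=i'$. Substituting $i=i'$ into the first antecedent, the orthogonality condition collapses to: for all $i,j,j'$, if $C[i,j]=C[i,j']$ then $j=j'$. This is precisely the statement that within any fixed row $i$ of $C$, no symbol is repeated---i.e.\ each row of $C$ has $n$ distinct entries, so $C$ is row-Latin.

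\textbf{Closing the equivalence.}
For the forward direction, assume $(C,E)$ is orthogonal; the computation above shows $C$ is row-Latin. Since $C$ is already column-Latin by hypothesis and is an $n\times n$ array on $n$ symbols, the observation in Section~\ref{sec:trans_ortho} (an array is Latin iff it is both row-Latin and column-Latin) gives that $C$ is a Latin square. For the converse, assume $C$ is a Latin square; then in particular its rows have distinct entries, so $C[i,j]=C[i,j']$ forces $j=j'$, which is exactly the reduced orthogonality condition derived above, hence $(C,E)$ is orthogonal.

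\textbf{Where the difficulty lies.}
There is essentially no hard step here: the proof is a direct unfolding of the definitions once one observes the key simplification that $E[i,j]=E[i',j']$ encodes $i=i'$. The only point requiring care is making sure the column-Latin hypothesis is genuinely used---it is what guarantees the column condition for free and lets the row condition alone decide whether $C$ is Latin. I would state this reduction explicitly to keep the argument transparent rather than invoking orthogonality as a black box.
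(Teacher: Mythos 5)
Your proof is correct. The paper itself does not prove Lemma~\ref{A and E}; it only cites Laywine and Mullen, so there is no in-paper argument to compare against, but your direct unfolding of Definition~\ref{def:ortho} --- observing that $E[i,j]=E[i',j']$ collapses to $i=i'$, so orthogonality to $E$ is exactly row-Latinness, which together with the column-Latin hypothesis gives a Latin square --- is exactly the intended argument, and it mirrors in style the proof the paper does supply for the dual statement, Lemma~\ref{R and E}. Both directions are handled and the column-Latin hypothesis is used where it must be; nothing is missing.
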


\begin{lemma}\label{orthogonal product}
If\/ $\{C_1, C_2, \dotsc, C_m\}$ is a set of mutually orthogonal column-Latin squares, then for any column-Latin square $G$, the set $\{C_1G, C_2G, \dotsc, C_mG\}$ comprises a set of mutually orthogonal column-Latin squares.
\end{lemma}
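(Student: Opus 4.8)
The plan is to establish the statement in two stages: first that each composed square $C_kG$ is genuinely column-Latin, and then that any two distinct members $C_aG$ and $C_bG$ satisfy the orthogonality condition of Definition~\ref{def:ortho}. For the first stage, I would write $C_k=(c_{k,0},\dotsc,c_{k,n-1})$ and $G=(g_0,\dotsc,g_{n-1})$ and observe that the $j$th column of $C_kG$ is the permutation $c_{k,j}g_j$. Since the composition of two permutations is again a permutation, every column of $C_kG$ is a permutation, so $C_kG$ is column-Latin; this part is immediate from the definition of the composition operation.

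For the second stage, fix indices $a\neq b$ and suppose $(C_aG)[i,j]=(C_aG)[i',j']$ together with $(C_bG)[i,j]=(C_bG)[i',j']$; the goal is to deduce $j=j'$. Applying the entry formula $(FG)[i,j]=F[G[i,j],j]$ established just above, these two hypotheses become $C_a[G[i,j],j]=C_a[G[i',j'],j']$ and $C_b[G[i,j],j]=C_b[G[i',j'],j']$. The key step is the substitution $u=G[i,j]$ and $u'=G[i',j']$, which rewrites the hypotheses as $C_a[u,j]=C_a[u',j']$ and $C_b[u,j]=C_b[u',j']$. Since $C_a$ and $C_b$ are orthogonal, Definition~\ref{def:ortho}, applied with row indices $u,u'$ and column indices $j,j'$, immediately gives $j=j'$, which is exactly what is required.

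The argument is short precisely because Definition~\ref{def:ortho} quantifies over \emph{all} row indices and constrains only the column indices in its conclusion; composing on the right by $G$ merely permutes entries within each column, replacing the row index $i$ by $g_j(i)=G[i,j]$, and since each $g_j$ is a bijection this reindexing does not disturb the orthogonality condition. Consequently I do not anticipate a genuine obstacle: the only point demanding care is the bookkeeping of the substitution, namely confirming that $u$ and $u'$ again range over valid row indices while the column indices $j,j'$ are left untouched by the composition---both of which are guaranteed by the entry formula $(FG)[i,j]=F[G[i,j],j]$. Once this unfolding of definitions is carried out cleanly, the conclusion that $\{C_1G,\dotsc,C_mG\}$ is a set of mutually orthogonal column-Latin squares follows for every pair $C_aG,C_bG$.
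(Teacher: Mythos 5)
Your proof is correct. The paper gives no proof of this lemma itself (it simply cites Laywine and Mullen), but your argument---unfolding $(C_kG)[i,j]=C_k[G[i,j],j]$, substituting $u=G[i,j]$ and $u'=G[i',j']$, and invoking Definition~\ref{def:ortho} for the pair $(C_a,C_b)$ to conclude $j=j'$---is the standard one and exactly mirrors the paper's own proof of the dual Lemma~\ref{product}, with the roles of left and right composition interchanged.
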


The next proposition provides criteria establishing a necessary and sufficient condition for
the orthogonality of two column-Latin squares.  In particular, the existence of a Latin square of a certain
form guarantees the orthogonality of the two column-Latin squares.
The biconditional statement in the proposition
was proven by Mann~\cite{mann1942construction} and also appears in Norton~\cite[Thm.~2]{Norton1952}
and Laywine--Mullin~\cite[Thm.~6.6]{laywine1998discrete},
though we strengthen the proposition by showing that when the squares are Latin (not just column-Latin)
the square providing the guarantee of orthogonality arises as a transversal representation
of one of the original two squares.

\begin{proposition}\label{orthogonal pair result}
Let\/ $C$ and $F$ be column-Latin squares. Then $(C,F)$ is an orthogonal pair if and only if there is a Latin square $Z$ such that $ZC = F$.
Moreover, if in addition, $C$ is a Latin square, then $(Z,F)$ is a TRP\@.
\end{proposition}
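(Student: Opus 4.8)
The plan is to prove the biconditional directly from Lemmas~\ref{A and E} and~\ref{orthogonal product}, and then to settle the ``Moreover'' clause by an entrywise computation using the composition formula $(ZC)[i,j] = Z[C[i,j],j]$.

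For the backward direction, I would suppose $ZC = F$ for a Latin square $Z$. Lemma~\ref{A and E} makes $(Z,E)$ an orthogonal pair, so applying Lemma~\ref{orthogonal product} with $G = C$ shows $\{ZC, EC\}$ is an orthogonal set; since $EC = C$ (the identity permutation acts trivially in each column) and $ZC = F$, this is precisely the assertion that $(C,F)$ is orthogonal. For the forward direction, I would set $Z := FC^{-1}$ and use $C^{-1}C = E$ together with $FE = F$ to get $ZC = FC^{-1}C = FE = F$; this $Z$ is in fact the \emph{unique} solution, since $ZC = Z'C$ forces $Z = Z'$ upon composing on the right with $C^{-1}$. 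To see that $Z$ is Latin, I would feed the orthogonal pair $(C,F)$ into Lemma~\ref{orthogonal product} with $G = C^{-1}$: the set $\{CC^{-1}, FC^{-1}\} = \{E, Z\}$ is orthogonal, so $(Z,E)$ is an orthogonal pair and Lemma~\ref{A and E} returns that $Z$ is a Latin square.

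For the ``Moreover'' clause I would add the hypothesis that $C$ is a Latin square and work with the unique $Z = FC^{-1}$, whose defining identity $ZC = F$ reads entrywise as $F[i,j] = Z[C[i,j],j]$ for all $i,j$. To check that $(Z,F)$ is a TRP I would verify Definition~\ref{def:transrep}: assuming $Z[i,j] = F[i',j]$ and $Z[i,j'] = F[i',j']$, I would substitute the entrywise identity into the right-hand sides to obtain $Z[i,j] = Z[C[i',j],j]$ and $Z[i,j'] = Z[C[i',j'],j']$. Since $Z$ is column-Latin (being a composition of column-Latin squares), each of its columns is a permutation, so these equalities force $i = C[i',j]$ and $i = C[i',j']$, and hence $C[i',j] = C[i',j']$. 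The final step is where the stronger hypothesis enters: because $C$ is a \emph{full} Latin square its rows are permutations as well, so $C[i',j] = C[i',j']$ yields $j = j'$, completing the verification.

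The biconditional is essentially immediate once the two lemmas are available; the only genuine choices are the composition factors ($G = C$ in one direction, $G = C^{-1}$ in the other) together with the bookkeeping identities $EC = C$, $FE = F$, and $CC^{-1} = C^{-1}C = E$. I expect the main obstacle to be the ``Moreover'' clause: one must unwind the composition formula carefully and isolate precisely the point at which the row-Latin property of $C$ is indispensable---namely the implication $C[i',j] = C[i',j'] \Rightarrow j = j'$, which fails for a merely column-Latin $C$. This is exactly the reason the clause requires $C$ to be Latin rather than only column-Latin.
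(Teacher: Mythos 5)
Your proposal is correct and follows essentially the same route as the paper: both directions of the biconditional use Lemmas~\ref{A and E} and~\ref{orthogonal product} with the same choices $G=C$ and $G=C^{-1}$, and the ``Moreover'' clause rests on the same entrywise identity $F[i,j]=Z[C[i,j],j]$, column-distinctness of $Z$, and the row-permutation property of the Latin square $C$. The only differences are cosmetic---you argue the TRP condition directly where the paper argues by contradiction, and you add a remark on the uniqueness of $Z$.
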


\begin{proof}
Suppose $Z$ is a Latin square and $ZC=F$ for column-Latin squares $C$ and $F$.
By Lemma~\ref{A and E}, $(Z,E)$ is an orthogonal pair.
By Lemma~\ref{orthogonal product}, $(ZC,EC)$ is an orthogonal pair. 
Since $ZC=F$ and $EC=C$, it follows that $(F,C)$ is an orthogonal pair.

Conversely, suppose $(C,F)$ is an orthogonal pair.
Let $Z = FC^{-1}$ (i.e., $ZC=F$).
Since $(C,F)$ is an orthogonal pair, by Lemma~\ref{orthogonal product}, $(Z,E)$ is an orthogonal pair (since $FC^{-1} = Z$ and $CC^{-1} = E$).
By Lemma~\ref{A and E}, $Z$ is a Latin square.

We now show that if $C$ is a Latin square and $F$ is a column-Latin square 
such that $(C,F)$ is an orthogonal pair,
then $(Z,F)$, which is equal to $(Z,ZC)$, is a TRP\@. 
Suppose that $(Z,F)$ is not a TRP\@.
Then there exist $i$, $i'$, $j$, $j' \in \{0,1,2,\dotsc,n-1\}$ where $j\neq j'$ with
\begin{align*}
Z[i,j] &= ZC[i',j] = Z[C[i',j],j] , \text{ and } \\
Z[i,j'] &= ZC[i',j'] = Z[C[i',j'], j'] .
\end{align*}
Since $Z$ is a Latin square, the symbols in each of its columns are distinct.
Thus, considering the entries of column~$j$ of $Z$,
we must have $C[i',j] = i$ and $C[i',j']=i$, but $C[i',j]=C[i',j']$ is a contradiction because
the rows of $C$ (in particular, row~$i'$) are permutations, implying $j=j'$.
Thus $(Z,F)$ is a TRP\@. \qed
\end{proof}

\subsection{Orthogonal Pair / Transversal Representation Duality}\label{sec:op_tr}

We now state a duality between orthogonality and transversal representations.
This duality was already used by Myrvold~\cite[Thm~1.1]{myrvold1999negative},
but we show how the duality can be concisely formulated in terms
of the composition operation on column-Latin squares---a convenient viewpoint
that we were unable to find in the literature.
Roughly speaking, Lemmas~\ref{R and E} and~\ref{product}
are the analogue of Lemmas~\ref{A and E} and~\ref{orthogonal product}
with ``orthogonal pair'' replaced by ``transversal representation pair''.

\begin{lemma}\label{R and E}
Let\/ $C$ be a column-Latin square. Then $(C,E)$ is a TRP if and only if\/ $C$ is a Latin square.
\end{lemma}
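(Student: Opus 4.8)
The plan is to unwind Definition~\ref{def:transrep} in the special case $B = E$ and observe that the resulting condition is precisely the row-Latin property of $C$. First I would record that since $E = (e,\dotsc,e)$ with $e$ the identity permutation, every column of $E$ is the identity, so $E[i',j] = i'$ for all column indices $j$. Substituting $A = C$ and $B = E$ into Definition~\ref{def:transrep}, the pair $(C,E)$ is a TRP exactly when, for all $i,i',j,j'$, the equalities $C[i,j] = i'$ and $C[i,j'] = i'$ force $j = j'$.

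Next I would observe that the two antecedent equalities together say nothing more than $C[i,j] = C[i,j']$, since both entries equal the single symbol $i'$ (which is in fact determined by the pair $(i,j)$). Hence the TRP condition collapses to the following: for all $i,j,j'$, if $C[i,j] = C[i,j']$ then $j = j'$. This is exactly the assertion that each row of $C$ consists of distinct entries, i.e., that $C$ is row-Latin.

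To finish, I would invoke the observation recorded in Section~\ref{sec:trans_ortho} that an $n\times n$ array of $n$ distinct symbols is a Latin square if and only if it is both row-Latin and column-Latin. Since $C$ is assumed to be column-Latin throughout, the derived equivalence between ``$(C,E)$ is a TRP'' and ``$C$ is row-Latin'' immediately upgrades to an equivalence between ``$(C,E)$ is a TRP'' and ``$C$ is Latin,'' which is the claim.

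There is no genuine obstacle here: the entire content is the bookkeeping of substituting the identity columns of $E$ into the definition, and the result is a direct transversal-representation analogue of Lemma~\ref{A and E} (with ``orthogonal pair'' replaced by ``TRP''), mirroring the parallel structure already noted between Definitions~\ref{def:transrep} and~\ref{def:ortho}. If anything warrants a moment of care, it is merely verifying that the antecedent truly reduces to an equality between two entries of the same row of $C$, rather than imposing a constraint across different rows; this is guaranteed by the fact that both $E$-entries carry the common row index $i'$.
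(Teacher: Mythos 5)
Your proposal is correct and follows essentially the same route as the paper's proof: both reduce the TRP condition with $E$ to the statement that each row of $C$ has distinct entries, and then combine row-Latinness with the assumed column-Latinness to conclude $C$ is Latin. Your version is a slightly cleaner, more direct unwinding of Definition~\ref{def:transrep} (using $E[i',j]=i'$), whereas the paper phrases the forward direction as a contradiction in terms of a repeated symbol forcing two transversal cells into the same row of $E$; the logical content is identical.
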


\begin{proof}
Let $C$ be a column-Latin square and $(C,E)$ be a TRP\@. It is enough to show that rows of $C$ are each an $n$-permutation. Assume, for a contradiction, that this is not the case. Then for some $0\leq i,j,j',k <n$ with $j \neq j'$, $C[i,j] =k= C[i,j']$. Since
$E$ is a transversal representation of $C$,
row $i$ of $C$ has its $t$-th symbol from column $t$ of $E$. Therefore, the symbol $k$ is on two different rows of $E$, which contradicts the definition of $E$. Therefore, rows of $C$ are each an $n$-permutation, and consequently, $C$ is a Latin square. 

Conversely, suppose $C$ is a Latin square. Since all symbols are distinct on each row of $C$ and the same on each row of $E$, then each row of $C$ takes symbols from distinct rows and columns of $E$ and the $t$-th symbol on each row is from column $t$ of $E$. Thus $E$  is a transversal representation of $C$. It follows that $(C,E)$ is a TRP\@. \qed
\end{proof}

\begin{lemma}\label{product}
Let\/ $\{C_1, C_2, \dotsc, C_m\}$ be a set of mutual TRPs of column-Latin squares, then for any column-Latin square $G$, the set $\{GC_1, GC_2, \dotsc, GC_m\}$ comprises mutual TRPs. 
\end{lemma}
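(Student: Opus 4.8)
The plan is to reduce the claim to a statement about a single pair and then strip off the factor $G$ using the fact that its columns are permutations. Since a set of mutual TRPs means that every two of its squares form a TRP, it suffices to fix distinct indices $k$ and $\ell$ and prove that $(GC_k, GC_\ell)$ is a TRP\@. First I would confirm that each $GC_k$ is genuinely column-Latin, so that Definition~\ref{def:transrep} applies: its $j$th column is the composition of the $j$th column of $G$ with the $j$th column of $C_k$, and a composition of two permutations is again a permutation.

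Next I would unfold Definition~\ref{def:transrep} for the pair $(GC_k, GC_\ell)$ using the entry formula for a composition recorded in Section~\ref{sec:composition}, namely $(GC_k)[i,j] = G[C_k[i,j],j]$. Assuming the two antecedents $(GC_k)[i,j] = (GC_\ell)[i',j]$ and $(GC_k)[i,j'] = (GC_\ell)[i',j']$, these expand to $G[C_k[i,j],j] = G[C_\ell[i',j],j]$ and $G[C_k[i,j'],j'] = G[C_\ell[i',j'],j']$.

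The key step is to cancel $G$. Because $G$ is column-Latin, the map $x \mapsto G[x,j]$ is a permutation of $\{0,\dotsc,n-1\}$ and hence injective, and likewise for column $j'$. Applying injectivity to the two displayed equalities yields $C_k[i,j] = C_\ell[i',j]$ and $C_k[i,j'] = C_\ell[i',j']$. These are precisely the hypotheses of Definition~\ref{def:transrep} for the pair $(C_k, C_\ell)$, which is a TRP by assumption, so we conclude $j = j'$. As this holds for all $i,i',j,j'$, the pair $(GC_k, GC_\ell)$ is a TRP, and letting $k$ and $\ell$ range over all distinct pairs finishes the argument.

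I expect the only real subtlety to be bookkeeping: keeping the composition on the correct (left) side and matching the column index $j$ that is cancelled with the column index of $G$ appearing in the entry formula. Notably, $G$ is cancelled \emph{columnwise}, which is exactly why column-Latinness of $G$---rather than any row condition---is the hypothesis that gets used, dually to how Lemma~\ref{orthogonal product} used right multiplication to preserve orthogonality. Beyond this careful tracking of indices, no genuine obstacle is anticipated.
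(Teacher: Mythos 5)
Your proposal is correct and follows essentially the same argument as the paper: reduce to a single pair, note that each $GC_k$ is column-Latin, expand the entries via $(GC_k)[i,j]=G[C_k[i,j],j]$, cancel $G$ using the injectivity of its columns, and invoke the TRP hypothesis on $(C_k,C_\ell)$. The only cosmetic difference is that the paper phrases this as a proof by contradiction while you argue directly.
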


\begin{proof}
It is enough to prove this statement for a set of two column-Latin squares. The columns of $GC_1$ and $GC_2$ are compositions of two permutations, therefore $GC_1$ and $GC_2$ are column-Latin squares. 
Assume, for a contradiction, that this is not the case.
Suppose there exist $i$, $i'$, $j$, $j' \in \{0,1,2,\dotsc,n-1\}$ where $j\neq j'$ with
\[ GC_1[i, j] = GC_2[i', j] \text{ and } GC_1[i, j'] = GC_2[i', j']. \]
Thus by equality of the symbols 
\[ G[C_1[i,j],j] = G[C_2[i',j],j] 
\text{ and }
G[C_1[i,j'],j'] = G[C_2[i',j'],j']. \]
Since $G$ is a column-Latin square, the uniqueness of symbols in its columns provides that
\[ C_1[i,j] = C_2[i',j] \text{ and } C_1[i,j'] = C_2[i',j'].\]
Since $(C_1,C_2)$ is a TRP, we have $j=j'$. This contradicts our assumption. Thus $(GC_1,GC_2)$ is a TRP\@. Therefore, the set consists of mutual TRPs. \qed
\end{proof}

\begin{proposition}\label{transversal pair result}
Let $C$ and $F$ be column-Latin squares.
Then $(C,F)$ is a TRP if and only if there is a Latin square $Z$ such that $CZ = F$.
Moreover, if\/ $C$ is a Latin square, then $Z$ is orthogonal to $F$.
\end{proposition}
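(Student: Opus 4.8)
The plan is to mirror the proof of Proposition~\ref{orthogonal pair result}, replacing the two ``orthogonal'' lemmas by their transversal-representation analogues: Lemma~\ref{R and E} in place of Lemma~\ref{A and E}, and Lemma~\ref{product} in place of Lemma~\ref{orthogonal product}. The one structural difference to keep in mind is the side of composition: orthogonality is preserved under \emph{right} composition (Lemma~\ref{orthogonal product}), whereas the TRP property is preserved under \emph{left} composition (Lemma~\ref{product}). This is exactly why the witness equation in this proposition is $CZ = F$ rather than $ZC = F$.

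For the forward direction I would assume $Z$ is a Latin square with $CZ = F$. By Lemma~\ref{R and E}, $(Z, E)$ is a TRP. Applying Lemma~\ref{product} with $G = C$ shows that $(CZ, CE)$ is a TRP. Since $CZ = F$ and $CE = C$ (from the definition of composition and $E$ being the identity), this says $(F, C)$ is a TRP, and by the symmetry of Definition~\ref{def:transrep} so is $(C, F)$. For the converse I would assume $(C, F)$ is a TRP and set $Z = C^{-1}F$, so that $CZ = C C^{-1} F = E F = F$ as required. To see $Z$ is Latin, I would apply Lemma~\ref{product} with $G = C^{-1}$ to the TRP $(C, F)$, obtaining that $(C^{-1}C, C^{-1}F) = (E, Z)$ is a TRP; Lemma~\ref{R and E} then yields that $Z$ is a Latin square.

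For the ``moreover'' clause I would bootstrap directly off the already-proven Proposition~\ref{orthogonal pair result} rather than argue from scratch. Assuming $C$ is a Latin square, the equation $CZ = F$ exhibits a Latin square (namely $C$) whose left composition with the column-Latin square $Z$ produces $F$. Proposition~\ref{orthogonal pair result}, applied to the pair $(Z, F)$ with witness $C$, then immediately gives that $(Z, F)$ is an orthogonal pair, i.e., $Z$ is orthogonal to $F$. Here it suffices that $Z$ is column-Latin (which it is, being a composition of column permutations), and that $C$ is Latin, which is the standing hypothesis of this clause.

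None of the three steps poses a genuine obstacle once the correct lemma is matched to each side of the composition. The only real care needed is bookkeeping: tracking left- versus right-composition, verifying the identities $CE = C$, $C C^{-1} = E$, and $E F = F$ from the definition of composition, and invoking the symmetry of the TRP definition at the two moments where the pair is produced in reversed order. The mildly nonobvious observation is that the last clause need not be reproved by hand at all, since the equation $CZ = F$ is precisely the hypothesis of Proposition~\ref{orthogonal pair result} in the case where the leading factor is Latin.
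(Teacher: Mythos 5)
Your proof of the biconditional is essentially identical to the paper's: both directions use Lemma~\ref{R and E} to relate $(Z,E)$ being a TRP to $Z$ being Latin, and Lemma~\ref{product} to transport the TRP property under left composition by $C$ (forward direction) or $C^{-1}$ (converse), with the witness $Z=C^{-1}F$. Where you genuinely diverge is the ``moreover'' clause. The paper proves it from scratch by contradiction: assuming $Z[i,j]=Z[i',j']$ and $F[i,j]=F[i',j']$ with $j\neq j'$, it unfolds $F=CZ$ to get $C[Z[i,j],j]=C[Z[i,j],j']$ and then uses the fact that the rows of the Latin square $C$ are permutations to force $j=j'$. You instead observe that the equation $CZ=F$, with $C$ Latin and $Z,F$ column-Latin, is exactly the witness required by the forward direction of Proposition~\ref{orthogonal pair result} applied to the pair $(Z,F)$, so orthogonality of $Z$ and $F$ follows immediately. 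This bootstrap is valid --- the hypotheses of Proposition~\ref{orthogonal pair result} only require the pair to be column-Latin and the witness to be Latin, and orthogonality is symmetric --- and it is arguably cleaner, since it makes the duality between the two propositions do the work rather than repeating a symbol-chasing argument. What the paper's self-contained argument buys is independence: its Proposition~\ref{transversal pair result} does not depend on Proposition~\ref{orthogonal pair result}, whereas in your version the two results are no longer logically parallel but chained.
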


\begin{proof}
Assume there exists a Latin square $Z$ such that $CZ = F$.
By Lemma~\ref{R and E}, $(Z,E)$ is a TRP\@.
By Lemma~\ref{product}, $(C,F)$, which is equal to $(CE,CZ)$, is a TRP\@.

Conversely, assume $(C,F)$ is a TRP\@. Let $Z = C^{-1}F$.
Since $(C,F)$ is a TRP and $(C^{-1}C ,C^{-1}F)=(E,Z)$, by Lemma~\ref{product}, $(E,Z)$ is a TRP\@.
Thus $(E,Z)$ is a TRP\@.
We have that $Z$ is a Latin square by Lemma~\ref{R and E}.

Now we prove that if $C$ is a Latin square, $Z$ and $F$ are orthogonal.
Assume, for a contradiction, that $(Z,F)$ (where $F=CZ$) is not an orthogonal pair, i.e.,
there exist $i$, $i'$, $j$, $j'\in \{0,1,2,\dotsc,n-1\}$ with $j\neq j'$ for which
\[ Z[i,j] = Z[i',j'] \text{ and } F[i,j] = F[i',j'] . \]
The second equation implies $C[Z[i,j],j] = C[Z[i',j'],j']$ an equality between two symbols in rows $j$ and~$j'$ of $C$,
which, after using the first equation, yields
$C[Z[i,j],j] = C[Z[i,j],j']$.
Since $C$ is a Latin square, its rows are permutations, which implies $j=j'$
and contradicts the assumption that $j\neq j'$.
Therefore, $(Z,F)$ must be an orthogonal pair. \qed
\end{proof}
    
The following result describes the equivalence between
a set of mutually orthogonal column-Latin squares
and a set of mutually TRPs.
The correctness of our SAT encoding relies on this equivalence.

\begin{theorem}[cf.~\cite{myrvold1999negative}]\label{ortho-to-transv}
Let $\mathcal{C}$ denote a set $\{C_1,\dotsc,C_r\}$ of\/ $r$ column-Latin squares of order~$n$.
\begin{enumerate}
\item[(a)] If\/ $\mathcal{C}$ contains
mutually orthogonal squares, then the set
\[ \{\, Z_1, \dotsc, Z_r : Z_1= C_1, Z_t = C_1C_t^{-1} \text{ for } 2\leq t \leq r \,\} \]
contains mutual TRPs.
\item[(b)] If\/ $\mathcal{C}$ consists of mutual TRPs, then the set
\[ \{\, Y_1, \dotsc, Y_r : Y_1= C_1, Y_t=C_t^{-1}C_1 \text{ for } 2\leq t \leq r \,\} \] 
contains mutually orthogonal pairs.  
\end{enumerate}
\end{theorem}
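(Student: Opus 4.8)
The plan is to prove both halves in parallel by passing through the single auxiliary set $S=\{E,C_2^{-1},\dotsc,C_r^{-1}\}$ and then transporting its structure with one of the two product lemmas. The key bookkeeping observation is that, in the group of order-$n$ column-Latin squares under composition (with identity $E$ and inverses given by the column inverse), we have $Z_1=C_1E$ and $Z_t=C_1C_t^{-1}$, so each $Z_t$ is $C_1$ composed \emph{on the left} with a member of $S$; dually $Y_1=EC_1$ and $Y_t=C_t^{-1}C_1$, so each $Y_t$ is a member of $S$ composed \emph{on the right} with $C_1$. Hence, once $S$ is known to be a set of mutual TRPs, Lemma~\ref{product} applied with $G=C_1$ yields part~(a), and once $S$ is known to be mutually orthogonal, Lemma~\ref{orthogonal product} applied with $G=C_1$ yields part~(b).

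For part~(a) I would verify that $S$ consists of mutual TRPs by splitting into two families of pairs. For a pair $(E,C_t^{-1})$, Lemma~\ref{R and E} together with the symmetry of Definition~\ref{def:transrep} shows it is a TRP precisely when $C_t^{-1}$ is a Latin square, which holds because the column inverse of a Latin square is again Latin (a repeated symbol in a row of $C_t^{-1}$ would force a repeated entry in a row of $C_t$). For a pair $(C_s^{-1},C_t^{-1})$ with $2\le s<t$, Proposition~\ref{transversal pair result} reduces the question to whether the witness $(C_s^{-1})^{-1}C_t^{-1}=C_sC_t^{-1}$ is Latin; and since $(C_s,C_t)$ is orthogonal, Proposition~\ref{orthogonal pair result} applied to the (symmetric) pair $(C_t,C_s)$ exhibits exactly $C_sC_t^{-1}$ as a Latin square. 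Thus $S$ is a mutual TRP set, and Lemma~\ref{product} sends it to $\{Z_1,\dotsc,Z_r\}$.

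Part~(b) is the mirror image: the pair $(E,C_t^{-1})$ is orthogonal iff $C_t^{-1}$ is Latin by Lemma~\ref{A and E} (same justification), and for $(C_s^{-1},C_t^{-1})$ Proposition~\ref{orthogonal pair result} reduces the claim to $C_t^{-1}C_s$ being Latin, which Proposition~\ref{transversal pair result} provides from the TRP $(C_t,C_s)$. Lemma~\ref{orthogonal product} then sends $S$ to $\{Y_1,\dotsc,Y_r\}$. Throughout, the only algebraic facts used are associativity, $CC^{-1}=E$, and the symmetry of Definitions~\ref{def:transrep} and~\ref{def:ortho}.

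The step I expect to be the crux is the treatment of the pairs containing $E$, that is, the pairs $(Z_1,Z_t)$ and $(Y_1,Y_t)$ in which $C_1$ enters asymmetrically. Unlike the pairs $(C_s^{-1},C_t^{-1})$, these are not supplied by orthogonality or the TRP property of the original squares alone; they rely on each $C_t^{-1}$ itself being Latin, that is, on each $C_t$ being a genuine Latin square. I would therefore make the Latin-ness of the squares explicit at this point, since it is exactly what powers Lemmas~\ref{R and E} and~\ref{A and E} here (and it is automatic in the intended application, where $\mathcal{C}$ is a set of MOLS); for a merely column-Latin $C_t$ the pair $(C_1,C_1C_t^{-1})$ need not be a TRP, so this is the one place where the hypothesis cannot be weakened.
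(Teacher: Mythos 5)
Your argument is correct and rests on the same algebraic identities as the paper's, but it is organized quite differently. The paper verifies the pairs directly on the $Z$'s and $Y$'s: for each $t\geq2$ it applies the ``moreover'' clause of Proposition~\ref{orthogonal pair result} to $(C_t,C_1)$ to get that $(Z_1,Z_t)$ is a TRP, and for $s,t\geq2$ it observes $Z_t(C_tC_s^{-1})=Z_s$ and invokes that clause again (part (b) is the mirror image via Proposition~\ref{transversal pair result}). You instead establish once that the auxiliary set $S=\{E,C_2^{-1},\dotsc,C_r^{-1}\}$ consists of mutual TRPs (resp.\ is mutually orthogonal) and transport it by $C_1$ on the left with Lemma~\ref{product} (resp.\ on the right with Lemma~\ref{orthogonal product}). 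Your factorization buys two things: it makes the duality between (a) and (b) completely mechanical (one set $S$, two sided compositions, two product lemmas), and for the pairs not involving $C_1$ it uses only the biconditional parts of Propositions~\ref{orthogonal pair result} and~\ref{transversal pair result}, whose hypotheses are fully supplied by the orthogonality (resp.\ TRP) of $(C_s,C_t)$; the paper's route through the ``moreover'' clause at the analogous step needs $C_tC_s^{-1}$ to be Latin but only records that it is column-Latin, so your handling of those pairs is actually tighter. Your closing remark is a genuine catch rather than a quibble: by Proposition~\ref{transversal pair result}, $(Z_1,Z_t)=(C_1,C_1C_t^{-1})$ is a TRP if and only if $C_1^{-1}(C_1C_t^{-1})=C_t^{-1}$ is a Latin square, i.e.\ if and only if $C_t$ is row-Latin as well as column-Latin, and orthogonality to $C_1$ does not force this. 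The paper's proof silently uses the same hypothesis when it invokes the ``moreover'' clause with $C=C_t$; this is harmless in the intended application, where the squares are Latin, but you are right that it is exactly the point where the column-Latin hypothesis cannot be weakened, and it deserves to be stated.
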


\begin{proof}
For (a), suppose the set $\{\,C_i : 1\leq i \leq r\,\}$ consists of mutually orthogonal column-Latin squares of order $n$. Construct a set of $r$ squares $\{\,Z_i : 1\leq i \leq r\,\}$ by letting $Z_1 = C_1$ and $Z_t =C_1C_t^{-1}$ for $2\leq t \leq r$. Proposition~\ref{orthogonal pair result} gives that each $Z_t$, $2\leq t \leq r$  is a Latin square; further it ensures that $(Z_1,Z_t)$ is a TRP\@. Observe that $Z_tC_tC_s^{-1} = Z_s$ for $2\leq t,s \leq r$ where $t\neq s$. Since both $C_t$ and $C_s^{-1}$ are column-Latin squares, their composition is a column-Latin square.  Thus $(Z_t,Z_s)$ for $2\leq t,s \leq r$ where $t\neq s$, being a TRP also follows from Proposition~\ref{orthogonal pair result}. 

For (b), suppose the set $\{\,C_i: 1\leq i \leq r\,\}$ consists of column-Latin squares of order~$n$ such that any two squares form a TRP\@. Construct a set of $r$ squares $\{\,Y_i : 1\leq i \leq r\,\}$ by letting $Y_1 = C_1$ and $Y_t=C_t^{-1}C_1$ for $2\leq t \leq r$. Proposition~\ref{transversal pair result} gives that each $Y_t$, $2\leq t \leq r$  is a Latin square; and that $Y_1$ and~$Y_t$ are orthogonal. Observe that $C_s^{-1}C_tY_t = Y_s$ for $2\leq t,s \leq r$ where $t\neq s$. Since both $C_s^{-1}$ and $C_t$ are column-Latin squares, their composition is a column-Latin square. Therefore, $Y_t$ being orthogonal to~$Y_s$ for $2\leq t,s \leq r$ where $t\neq s$ also follows from Proposition~\ref{transversal pair result}. \qed
\end{proof}

\section{Encoding and Implementation}\label{sec:encoding}

In this section we describe our encoding of the problem of constructing transversal representation pairs (TRPs)
into a Boolean satisfiability problem and how we use our encoding to search for TRPs for each of Myrvold's 28
possible types described in Section~\ref{sec:trans_rep_types}.
Recall that Myrvold's 28 types describe TRPs $(P,Q)$ for which $P$ and $Q$ are
each transversal representations of a Latin square $L$ of order $n=10$
containing a $4\times4$ Latin subsquare.

To reduce the existence of the $n\times n$ square $P$ into Boolean logic,
we use $n^3$ Boolean variables $P_{i,j,k}$ (for $0\leq i,j,k < n$)
with $P_{i,j,k}$ denoting the fact that the $(i,j)$th entry of $P$ is $k$.
Similarly, another $n^3$ Boolean variables $Q_{i,j,k}$ for $0\leq i,j,k < n$
represent the entries of the square $Q$.

Once these variables have been defined, we need to specify constraints that $P$
and~$Q$ are Latin squares (see Section~\ref{sec:symbol}),
are a transversal representation pair (see Section~\ref{sec:TRPC}),
and conform to one of Myrvold's 28 types (see Section~\ref{sec:colour}).
Additionally, we ensure that the white entries in the last four columns of $P$ and~$Q$
appear in a way that is consistent with a $4\times4$ Latin subsquare $\Omega$
being in a square $L$ having mutual transversal representations
$P$ and~$Q$
(see Section~\ref{sec:subsquare_consistency}).
We also describe a method of symmetry breaking
which reduces the size of the search space by adding additional constraints
which hold without loss of generality (see Section~\ref{sec:symmetry_breaking}).
Finally, once we have found a collection of TRPs, we run a postprocessing step on
them, ensuring that the TRPs are pairwise inequivalent and that they
cannot be extended to a set of three mutual TRPs (see Section~\ref{sec:extendability}).
Our encoding scripts are written in Python and are freely available
at \href{https://doi.org/10.5281/zenodo.18130631}{doi.org/10.5281/zenodo.18130631}.

\subsection{Latin Square Constraints}\label{sec:symbol}

First, we need to describe constraints on the
variables $P_{i,j,k}$ (meaning that $P[i,j]=k$) asserting that $P$
is a Latin square.  Direct methods for doing this from the definition of a
Latin square are well known and widely used; e.g., see (10.1)--(10.4) in Zhang's survey~\cite{zhang1997specifying}.
The direct method asserts that every cell of $P$ contains \emph{at least one} symbol
and \emph{at most one} symbol, i.e.,
\[ \bigvee_{0\leq i<n} P_{p,q,i} \quad\text{and}\quad \bigwedge_{0\leq i < j < n} \left(\neg P_{p,q,i} \lor \neg P_{p,q,j} \right) \quad\text{for all $0\leq p,q<n$.} \]
Additionally, every column of $P$ contains $n$ distinct symbols,
\[ \bigvee_{0\leq i<n} P_{i,q,r} \quad\text{and}\quad \bigwedge_{0\leq i < j < n} \left(\neg P_{i,q,r} \lor \neg P_{j,q,r} \right) \quad\text{for all $0\leq q,r<n$,} \]
and similarly every row of $P$ contains $n$ distinct symbols,
\[ \bigvee_{0\leq i<n} P_{p,i,r} \quad\text{and}\quad \bigwedge_{0\leq i < j < n} \left(\neg P_{p,i,r} \lor \neg P_{p,j,r} \right) \quad\text{for all $0\leq p,r<n$.} \]
This encoding uses what is known as
the binomial or pairwise encoding of the \emph{exactly one} predicate~\cite{MarquesSilva}
and uses $3n^2\bigl(\binom{n}{2}+1\bigr)$ clauses in total.
While this encoding gave good performance, in our experiments we got slightly better performance
with the cardinality constraint encoding of Bailleux and Boufkhad~\cite{Bailleux2003}.
Their encoding reduces a constraint like $x_1+\dotsb+x_n=r$ (where $r$ is a fixed integer
between~$0$ and~$n$ and we think of the Boolean $x_i$s as $\{0,1\}$ variables)
into conjunctive normal form.  Using this encoding we specify that $P$ is a Latin square with the cardinality constraints
\[ \sum_{0\leq i<n} P_{p,q,i} = 1, \quad \sum_{0\leq i<n} P_{i,p,q} = 1, \quad \sum_{0\leq i<n} P_{p,i,q} = 1 \quad\text{for all $0\leq p,q<n$,} \]
and a similar encoding can be used to specify that $Q$ is also a Latin square.

\subsection{Transversal Representation Constraints}\label{sec:TRPC}

The direct encoding that $(P,Q)$ is a TRP using the contrapositive of Definition~\ref{def:transrep}
would be
\[ (P_{i,j,k}\land P_{i,j',k'}\land Q_{i',j,k})\limp \lnot Q_{i',j',k'} \quad\text{for all $0\leq i,i',j,j',k,k'<n$ with $j<j'$}. \]
This is because if row $i$ of $P$ has its $j$th entry as $k$ and its $(j')$th entry as $k'$,
then in whatever row of~$Q$ which has its $j$th entry as $k$ (one such row must exist since $Q$ is a Latin square)
that row \emph{cannot} have its $(j')$th entry as $k'$, or that row wouldn't represent a transversal.
However, this encoding uses $n^4\binom{n}{2}=\Theta(n^6)$ clauses of length 4 which is not ideal in practice.
Instead, our encoding that $(P,Q)$ is a TRP will assert the existence
of the Latin square $Z=P^{-1}Q$ and by Proposition~\ref{transversal pair result} this
implies that $P$ and~$Q$ are a transversal representation pair.

As before, the entries of the square $Z$ are encoded via $n^3$
new variables $Z_{i,j,k}$ (with $0\leq i,j,k<n$) and $Z$ is enforced
to be a Latin square using the same encoding described in Section~\ref{sec:symbol}.
Now we need to enforce the relationship $Q=PZ$, which means
that the $(i,j)$th entry of $Q$ is equal to the $(i',j)$th entry of~$P$, where $i'=Z[i,j]$.
Letting $k$ represent the $(i,j)$th entry of $Q$, this gives the constraints
\[ (Z_{i,j,i'}\land P_{i',j,k})\limp Q_{i,j,k} \quad\text{for all $0\leq i,i',j,k<n$.} \]
Moreover, because $P=QZ^{-1}$ and $Z=P^{-1}Q$, we similarly derive the constraints
\begin{gather*}
(Z_{i,j,i'}\land Q_{i,j,k})\limp P_{i',j,k} \quad\text{for all $0\leq i,i',j,k<n$,} \\
(P_{i',j,k}\land Q_{i,j,k})\limp Z_{i,j,i'} \quad\text{for all $0\leq i,i',j,k<n$.}
\end{gather*}
These last two kinds of constraints are technically redundant, but we found that
they tended to improve the performance
of the solving in practice.

Thus, our encoding that $(P,Q)$ is a TRP uses $3n^4$ clauses and the
$3n^2$ cardinality constraints
$\sum_{i}Z_{i,j,k}=\sum_{i}Z_{j,k,i}=\sum_{i}Z_{j,i,k}=1$ for all $0\leq j,k<n$.
Altogether, this TRP encoding
uses $\Theta(n^4)$ clauses of length at most 3, and in practice this
is preferable to the $\Theta(n^6)$ clauses of length 4 used by the direct
encoding.

A similar $\Theta(n^4)$ clause encoding was previously derived
by Zhang (see~\cite[Lemma~2]{zhang1997specifying}), for ensuring
the orthogonality of a pair $(A,B)$ of Latin squares of order~$n$.  Zhang's encoding
for orthogonality uses a new predicate $\Phi(i,j,k)$ introduced via a clever
trick and Zhang mentions that \emph{``It is a
challenge to develop a method which can automatically generate the predicates
like $\Phi$\dots''}~\cite{Zhang2021}.
Zhang does not view $\Phi$ as a square, but
viewing $\Phi(i,j,k)$ as asserting that $\Phi[i,j]=k$,
Zhang uses constraints saying that
$\Phi$'s columns have distinct symbols and that
the entries of $A$ and $B$ determine $\Phi$'s entries.
Following our notation, Zhang uses constraints of the form
\[ (A_{i,j,k}\land B_{i,j,\ell})\limp\Phi(i,k,\ell) , \qquad\text{for all $0\leq i,j,k,\ell<n$} . \]
In light of the above and Proposition~\ref{orthogonal pair result}, this means that
not only is $\Phi$ itself a Latin square, it
can be naturally viewed as a transversal representation of
one of the original Latin squares and
conveniently expressed via a composition square.\footnote{The
constraints used by Zhang causes the \emph{columns} of $\Phi$
to represent transversals of $B$ and for $\Phi$
to be the composition square $BA^{-1}$
where the composition and inverse are defined
\emph{row-wise} instead of column-wise like in the rest of this paper.}
Viewing $\Phi$ as a composition square, one can derive
additional constraints on $\Phi$ using this extra structure (e.g., the entries of $A$ and~$\Phi$
determine the entries of~$B$).  As previously mentioned, such constraints are technically
redundant, but tended to help the efficiency of the solver
in our experiments.

\subsection{Colour Constraints}\label{sec:colour}

We now describe how we encode that the square $P$ is one
of Myrvold's eight types described in Table~\ref{tbl:transtypes};
an identical encoding is used for $Q$.
In order to do this, we need to be able to specify the \emph{colour}
of each cell in the square $P$ to be either white, light, or dark.
Let $\w$ and $\d$ represent fixed symbols that are not
in our symbol set $\{0,\dotsc,n-1\}$.

We let the Boolean variable $P_{i,j,\w}$ represent that the $(i,j)$th entry of $P$ is white,
and let the Boolean variable $P_{i,j,\d}$ represent that the $(i,j)$th entry of $P$ is dark.
Otherwise,
if both $P_{i,j,\w}$ and $P_{i,j,\d}$ are false, then the $(i,j)$th entry
of $P$ will be light.  Note that dark variables are only necessary in the first
six columns, since no dark entries appear in the last four columns (see
Figure~\ref{fig:decomp}).  Additionally, the position of the dark cells in the
first six columns completely determines the position of the white cells in the
first six columns---the whites containing the symbols $\{4,\dotsc,9\}$
not darkly coloured---making the variables $P_{i,j,\w}$ only necessary for $j\geq6$.
Altogether, we introduce $n^2$ new variables encoding the colours of $P$.

To ensure the symbols $\{0,\dotsc,3\}$ are coloured white, we use the clauses
\[ P_{i,j,r} \limp P_{i,j,\w} \quad\text{for all $0\leq i<n$, $6\leq j<n$, and $0\leq r<4$,} \]
and conversely to ensure that only symbols $\{0,\dotsc,3\}$ are coloured white we use
$P_{i,j,\w} \limp \bigvee_{0\leq r<4} P_{i,j,r}$ for all $0\leq i<n$ and $6\leq j<n$.
Similarly, to ensure that only symbols $\{4,\dotsc,9\}$ are coloured dark, we 
use the clauses
\[ P_{i,j,\d} \limp \bigvee_{4\leq r<n} P_{i,j,r} \quad\text{for all $0\leq i<n$ and $0\leq j<6$.} \]
Recall that a transversal is said to be of type $p_k$ when it has $k$ whites
in its last four entries.  By Lemma~\ref{lem:darkcount}, transversals of type~$p_k$ will also
have $2k-2$ dark entries in its first six entries.  Thus, in order to specify
that row $i$ in $P$ is of type~$p_k$, we use the constraints
\[ \sum_{0\leq j<6}P_{i,j,\d} = 2k-2 \quad\text{and}\quad \sum_{6\leq j<n}P_{i,j,\w} = k . \]
Here, like in Section~\ref{sec:symbol}, we think of Boolean variables as taking
$\{0,1\}$ values and encode the cardinality constraints with the
encoding of Bailleux and Boufkhad~\cite{Bailleux2003}.  We also know
that each of the first six columns of $P$ contain exactly two dark entries,
so we use the cardinality constraints
\[ \sum_{0\leq i<n}P_{i,j,\d} = 2 \quad\text{for all $0\leq j<6$.} \]

Similarly, we also use $n^2$ Boolean variables $Q_{i,j,\w}$ and $Q_{i,j,\d}$
to represent the colours of the square $Q$ and add similar constraints to
those above (using the $Q_{i,j,\w}$ and $Q_{i,j,\d}$ variables
in place of the $P_{i,j,\w}$ and $P_{i,j,\d}$ variables).
We now have specified a coloured TRP $(P,Q)$ with each of $P$ and $Q$ conforming
to any of Myrvold's types $\R$, $\S$, $\dotsc$, $\X$ selected in advance.
However, because $P$ and $Q$ are
both transversal representations of the same coloured square $L$, it is important
that their colours be \emph{consistent} between themselves.  In particular, the 
two entries coloured dark in each of the first six columns of $P$ must match
the two entries coloured dark in each of the first six columns of $Q$.  (The
white colours always match as they correspond exactly to the symbols $\{0,1,2,3\}$, so
if the dark colours match then so must the light colours.)

Suppose the $(i,j)$th entry of $P$ has symbol $k$ and is coloured dark.
Then, in order for the colouring to be consistent, the
entry of $Q$ in the $j$th column having symbol $k$ must also be coloured dark.
The symbol~$k$ must exist in the $j$th column of $Q$
because $Q$ is a Latin square, so say this happens in row $i'$.
Then to express the consistency of the colours in $P$ and $Q$ we use the constraints
\[ (P_{i,j,k}\land P_{i,j,\d}\land Q_{i',j,k}) \limp Q_{i',j,\d} \quad\text{for all $0\leq i,i'<n$, $0\leq j<6$, and $4\leq k<n$.} \]
Although not strictly necessary, we also add constraints
deriving the colour of cell $(i,j)$ in $P$ from the colour of cell $(i',j)$ in $Q$,
giving the constraints
\[ (P_{i,j,k}\land Q_{i',j,\d}\land Q_{i',j,k}) \limp P_{i,j,\d} \quad\text{for all $0\leq i,i'<n$, $0\leq j<6$, and $4\leq k<n$.} \]

\subsection{Consistency with the $4\times4$ Subsquare $\Omega$}\label{sec:subsquare_consistency}

Recall Myrvold's seven transversal representation types of a Latin square $L$
are under the assumption that $L$ has a $4\times4$ Latin subsquare $\Omega$.
As described in Section~\ref{sec:trans_rep_types}, we assume that the subsquare
$\Omega$ contains the symbols $\{0,1,2,3\}$ and appears in the lower-right of $L$.
There are two possibilities for~$\Omega$ up to isotopism, where two Latin squares
are \emph{isotopic} if one can be transformed into the other by row, column,
or symbol permutations~\cite{mckay2007small}.  The two possibilities 
for $\Omega$ up to isotopism are the Cayley tables
of~$\Z_4$ and $\Z_2\times\Z_2$,
and we assume
that $\Omega$ is either
\[ \Omega_1 \coloneqq \mspace{3mu}
\begin{tabular}{| *4{ @{\hskip 0pt} c @{\hskip 0pt} |}}
\hline
\sq0 & \sq1 & \sq2 & \sq3 \\ \hline
\sq1 & \sq2 & \sq3 & \sq0 \\ \hline
\sq2 & \sq3 & \sq0 & \sq1 \\ \hline
\sq3 & \sq0 & \sq1 & \sq2 \\ \hline
\end{tabular}\mspace{6mu},
\qquad\text{or}\qquad
\Omega_2 \coloneqq \mspace{3mu}\begin{tabular}{| *4{ @{\hskip 0pt} c @{\hskip 0pt} |}}
\hline
\sq0 & \sq1 & \sq2 & \sq3 \\ \hline
\sq1 & \sq0 & \sq3 & \sq2 \\ \hline
\sq2 & \sq3 & \sq0 & \sq1 \\ \hline
\sq3 & \sq2 & \sq1 & \sq0 \\ \hline
\end{tabular}\mspace{6mu}. \]
Since we are searching for Latin squares $P$ and $Q$ that are
both transversal representations of $L$, this restricts the
possible locations for the white entries in the last four
columns of $P$ and $Q$.  For example, if either $\Omega_1$
or $\Omega_2$ is the lower-right subsquare of $L$, then since
$P$ is a transversal representation of $L$, it cannot be
the case that $P[i,6]=0$ and $P[i,7]=1$,
regardless of the row~$i$ chosen.
This is because the $0$ in column~6 of $L$ and the $1$ in column~7 of $L$
appear in the same row and therefore cannot appear in the same transversal.

Noting that the first row of
$\Omega_1$ and $\Omega_2$ are both $[0,1,2,3]$, we add the clauses
\[ P_{i,j,j-6}\limp\lnot P_{i,j',j'-6} \quad\text{for all $0\leq i<n$ and $6\leq j<j'<n$}, \]
and use similar clauses for $Q$.  Generalizing this, let $\omega_1$ be a Boolean variable that is
true when $\Omega_1$ is to be used in $L$, and let $\omega_2$ be a Boolean variable
that is to be true when $\Omega_2$ is to be used in $L$.  We add the clauses
\begin{gather*}
(\omega_1\land P_{i,j,\Omega_1[i',j-6]})\limp\lnot P_{i,j',\Omega_1[i',j'-6]} \\
(\omega_2\land P_{i,j,\Omega_2[i',j-6]})\limp\lnot P_{i,j',\Omega_2[i',j'-6]}
\end{gather*}
for all $0\leq i<n$, $i'\in\{1,2,3\}$, and $6\leq j<j'<n$, and use similar clauses for~$Q$.
Specifying either $\Omega_1$ or~$\Omega_2$ is to be used in $L$
is done with the clause $\omega_1\lor\omega_2$.
If a particular subsquare $\Omega_1$ or $\Omega_2$ is desired,
it can be enforced with either the unit clause $\omega_1$ or
the unit clause $\omega_2$.

\subsection{Symmetry Breaking}\label{sec:symmetry_breaking}

The ordering of rows of a transversal representation square
is arbitrary
in the sense that if $P$ is a transversal representation
of $Q$, then the rows of $P$ can be freely permuted while preserving
the fact that it is a transversal representation of~$Q$.
Similarly, the rows of $Q$ may also be permuted.
Columns may not be permuted independently, but if $(P,Q)$ is a TRP
and the same permutation of columns is applied to both
$P$ and $Q$ simultaneously, then the resulting new pair will also be a TRP\@.
Similarly, the same permutation of symbols applied to both squares in a TRP
maintains the property of the pair being a TRP\@.
Since we have already supposed that the symbols in the lower-right $4\times4$
submatrix of~$L$ are in $\{0,1,2,3\}$, in order to not disturb this structure
all permutations on symbols will operate on $\{0,1,2,3\}$ and $\{4,\dotsc,9\}$
independently.  Similarly, we only use permutations of the first six and last
four columns when
transforming a TRP into the normal form
defined below.

By a \emph{coloured} TRP we mean one whose cells have been assigned
the colours $\{\text{white},\text{light},\text{dark}\}$
corresponding to Myrvold's types from Section~\ref{sec:trans_rep_types}.
If $(P,L)$ is a coloured TRP where~$L$ has been coloured
corresponding to Figure~\ref{fig:decomp}, then permutations of the
rows of $P$ will also permute the colour positions in~$P$.  Similarly, permutations
of the columns of $P$ and $L$ simultaneously will permute
the colour positions in $(P,L)$, whereas
permuting the symbols $\{4,\dotsc,9\}$ or $\{0,1,2,3\}$
in $(P,L)$ will not permute the colour positions in $(P,L)$.

Row permutations of $P$, row
permutations of $Q$, column permutations of the first six or last four columns
of $(P,Q)$,
and symbol permutations of the first four or last six symbols of $(P,Q)$
generate a group $G$ of size $10!^2\cdot6!^2\cdot4!^2\approx4\cdot10^{21}$.
We call two coloured TRPs \emph{equivalent} if one can be transformed to the
other using operations in $G$.
The large size of $G$ means that our search space contains a large number of TRPs
that are equivalent.
This artificially increases the size of the search space,
and we would like to constrain
the search space in order to limit
the search to as few representatives from each equivalence class
as possible---this is known as \emph{symmetry breaking}.
We are able to remove many representatives from the search by
only searching for TRPs in the normal form defined below.

\begin{definition}\label{normal form}
    A coloured TRP $(P,Q)$ is in \emph{normal form} if
    the rows of each square are sorted by transversal type
    (i.e., if row $i$ has type $p_k$ and row $i'\geq i$ has type $p_{k'}$
    then $k\leq k'$), all the rows of the same transversal type are sorted
    in increasing lexicographic order, and the first row of\/ $P$ is one of
    \begin{center}
    \begin{tikzpicture}[baseline={([yshift=-.5ex]current bounding box.center)}]
\matrix (m) [matrix of nodes,nodes={element},column sep=-\pgflinewidth, row sep=-\pgflinewidth]{
|[draw,fill=white]| $0$ &
|[draw,fill=white]| $1$ &
|[draw,fill=white]| $2$ &
|[draw,fill=light]| $4$ &
|[draw,fill=light]| $5$ &
|[draw,fill=light]| $6$ &
|[draw,fill=white]| $3$ &
|[draw,fill=light]| $7$ &
|[draw,fill=light]| $8$ &
|[draw,fill=light]| $9$ \\
}; \end{tikzpicture}\rlap{\!,} \\
    \begin{tikzpicture}[baseline={([yshift=-.5ex]current bounding box.center)}]
\matrix (m) [matrix of nodes,nodes={element},column sep=-\pgflinewidth, row sep=-\pgflinewidth]{
|[draw,fill=white]| $0$ &
|[draw,fill=white]| $1$ &
|[draw,fill=white]| $3$ &
|[draw,fill=light]| $4$ &
|[draw,fill=light]| $5$ &
|[draw,fill=light]| $6$ &
|[draw,fill=white]| $2$ &
|[draw,fill=light]| $7$ &
|[draw,fill=light]| $8$ &
|[draw,fill=light]| $9$ \\
}; \end{tikzpicture}\rlap{\!, or} \\
    \begin{tikzpicture}[baseline={([yshift=-.5ex]current bounding box.center)}]
\matrix (m) [matrix of nodes,nodes={element},column sep=-\pgflinewidth, row sep=-\pgflinewidth]{
|[draw,fill=white]| $0$ &
|[draw,fill=white]| $2$ &
|[draw,fill=white]| $3$ &
|[draw,fill=light]| $4$ &
|[draw,fill=light]| $5$ &
|[draw,fill=light]| $6$ &
|[draw,fill=white]| $1$ &
|[draw,fill=light]| $7$ &
|[draw,fill=light]| $8$ &
|[draw,fill=light]| $9$ \\
}; \end{tikzpicture}\rlap{\!.}
    \end{center}
\end{definition}

In Theorem~\ref{isomorphism pro}, we demonstrate that
every equivalence class of TRPs of the kind we are looking for
contains at least one TRP in normal form.  First, we prove
a simple lemma used in the proof of Theorem~\ref{isomorphism pro}.

\begin{lemma}\label{lem:4x4}
Suppose $\Omega$ is a Latin square of order 4.
Then $\Omega$ is isotopic to
either $\Omega_1$ or\/~$\Omega_2$.  In either case,
$\Omega$ can be transformed into $\Omega_1$ or $\Omega_2$
without permuting column~0 or symbol~0.
\end{lemma}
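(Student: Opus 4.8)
The plan is to exhibit one explicit normalization procedure that only ever row-permutes (freely), column-permutes within columns $1,2,3$, and symbol-permutes within symbols $1,2,3$, so that column~$0$ and symbol~$0$ are never disturbed, and that always terminates at $\Omega_1$ or $\Omega_2$. A single such construction proves both sentences of the lemma at once, because every operation used is itself an isotopy.

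First I would reduce $\Omega$. Since column~$0$ is a permutation of $\{0,1,2,3\}$, a (unique) row permutation makes column~$0$ equal to $(0,1,2,3)^{\mathsf T}$; this uses rows only. The $(0,0)$ entry is now~$0$, so row~$0$ is a permutation of $\{0,1,2,3\}$ fixing its first entry, and a permutation of columns $1,2,3$ (which fixes column~$0$ and touches no symbol) makes row~$0$ equal to $(0,1,2,3)$. The resulting square $\Omega'$ is \emph{reduced}: its first row and first column are both $0,1,2,3$.

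The key step is to read $\Omega'$ as a multiplication table. Setting $i\ast j\coloneqq\Omega'[i,j]$ makes $\{0,1,2,3\}$ a quasigroup, and reducedness gives $0\ast j=j$ and $i\ast 0=i$, so $0$ is a two-sided identity and $(\{0,1,2,3\},\ast)$ is a \emph{loop}. I would then invoke the classical fact that every loop of order at most four is a group (the smallest nonassociative loop has order five), so $(\{0,1,2,3\},\ast)$ is one of the two groups of order four, $\Z_4$ or $\Z_2\times\Z_2$. Their Cayley tables written with the identity labelled~$0$ are exactly $\Omega_1$ and $\Omega_2$. An isomorphism $g$ from $(\{0,1,2,3\},\ast)$ onto the appropriate group sends the identity to the identity, so $g(0)=0$; and because $g$ is an isomorphism, the triple $(\alpha,\beta,\gamma)=(g,g,g)$ is an isotopy carrying $\Omega'$ onto $\Omega_1$ or $\Omega_2$, which fixes both column~$0$ and symbol~$0$.

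Finally I would verify that composing the three stages preserves the constraint: the column permutations used (identity in the row stage, the column stage fixing~$0$, and $g$) all fix~$0$, as do the symbol permutations (only $g$ is nontrivial), while the row permutations are unconstrained; hence the net isotopy fixes column~$0$ and symbol~$0$. The main things to get right are this bookkeeping and the check that the standard Cayley tables of $\Z_4$ and $\Z_2\times\Z_2$ really coincide with $\Omega_1$ and $\Omega_2$. The potential obstacle is justifying ``every loop of order at most four is a group,'' which I would either cite or replace by the elementary alternative of listing the four reduced Latin squares of order four and exhibiting, for each, an explicit relabelling of $\{1,2,3\}$ and of columns $\{1,2,3\}$ taking it onto $\Omega_1$ or $\Omega_2$.
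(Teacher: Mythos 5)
Your proposal is correct, and its skeleton---normalize to a reduced square using only operations that fix column~0 and symbol~0, then dispatch the finitely many reduced squares---matches the paper's. The difference lies in how each handles the two stages. For the normalization, the paper fixes the first row by permuting symbols $\{1,2,3\}$ and the first column by permuting rows, whereas you fix the first column by rows and the first row by permuting columns $\{1,2,3\}$; both respect the constraint, and both are among the operations the lemma is later invoked with in Theorem~\ref{isomorphism pro}. For the final step, the paper simply cites that there are four reduced Latin squares of order~4, writes down the two extra ones $\Omega_3$ and $\Omega_4$, and exhibits an explicit transposition of rows, columns, and symbols (the same transposition in each role, fixing~$0$) carrying each onto $\Omega_1$. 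You instead read the reduced square as the Cayley table of a loop with identity~$0$ and invoke the classical fact that every loop of order at most four is associative, so the loop is $\Z_4$ or $\Z_2\times\Z_2$ and the group isomorphism $g$, applied as the isotopy $(g,g,g)$, fixes $0$ in every coordinate and lands on $\Omega_1$ or $\Omega_2$. This is a genuinely different justification: it buys you the isotopy classification of order-4 Latin squares as a corollary rather than a cited input, and it explains conceptually why the fix-up permutation can be taken diagonal and $0$-fixing (isomorphisms preserve the identity), at the cost of importing the loop fact---which, as you note, would itself be verified by exactly the four-square enumeration the paper performs, so the two arguments are ultimately interchangeable in rigor.
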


\begin{proof}
There are exactly two
Latin squares of order~4 up to isotopy ($\Omega_1$ and~$\Omega_2$)
and a total of four \emph{reduced}
Latin squares of order 4 (i.e., with entries in the first
row and column appearing in sorted order)~\cite{mckay2007small}.  The two additional reduced
Latin squares of order four are both isotopic to $\Omega_1$
and are given by
\[ \Omega_3 \coloneqq \mspace{3mu}
\begin{tabular}{| *4{ @{\hskip 0pt} c @{\hskip 0pt} |}}
\hline
\sq0 & \sq1 & \sq2 & \sq3 \\ \hline
\sq1 & \sq0 & \sq3 & \sq2 \\ \hline
\sq2 & \sq3 & \sq1 & \sq0 \\ \hline
\sq3 & \sq2 & \sq0 & \sq1 \\ \hline
\end{tabular}\mspace{6mu},
\qquad\text{and}\qquad
\Omega_4 \coloneqq \mspace{3mu}\begin{tabular}{| *4{ @{\hskip 0pt} c @{\hskip 0pt} |}}
\hline
\sq0 & \sq1 & \sq2 & \sq3 \\ \hline
\sq1 & \sq3 & \sq0 & \sq2 \\ \hline
\sq2 & \sq0 & \sq3 & \sq1 \\ \hline
\sq3 & \sq2 & \sq1 & \sq0 \\ \hline
\end{tabular}\mspace{6mu}. \]

If $\Omega$ is isotopic to $\Omega_2$, it can be transformed into reduced form
by using row permutations to put $0$ in the upper-left corner,
then symbol permutations of $\{1,2,3\}$ to make the first row $[0,1,2,3]$,
and then permuting the last three rows to transform the first column into $[0,1,2,3]$.
Since there is only one reduced Latin square of order~4 isotopic to $\Omega_2$,
this must transform $\Omega$ to $\Omega_2$.

Otherwise, if $\Omega$ is isotopic to $\Omega_1$, use row and symbol permutations
as above to transform it into reduced form, thereby transforming it into $\Omega_1$,
$\Omega_3$, or $\Omega_4$.  To transform $\Omega_3$ into $\Omega_1$, swap
columns~1 and~2, rows~1 and~2, and symbols~1 and~2.  To transform $\Omega_4$
into $\Omega_1$, swap columns~2 and~3, rows~2 and~3, and symbols~2 and~3. \qed
\end{proof}

\begin{theorem}\label{isomorphism pro}
Suppose $(P,Q)$, $(P,L)$, and $(Q,L)$ are coloured TRPs
where $L$ contains a $4\times4$ Latin subsquare
and is coloured according to Figure~\ref{fig:decomp}.
Then $(P,Q)$ is equivalent to a coloured TRP in normal form
and the lower-right\/ $4\times4$ Latin subsquare in $L$ can be taken to be
either\/ $\Omega_1$ or\/ $\Omega_2$.
\end{theorem}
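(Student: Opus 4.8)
The plan is to exhibit an explicit sequence of operations from the symmetry group $G$ that carries an arbitrary qualifying TRP into normal form, applying them in the order dictated by the normal-form conditions so that each step preserves the conditions already established. First I would handle the subsquare: by Lemma~\ref{lem:4x4}, the lower-right $4\times4$ subsquare $\Omega$ of $L$ is isotopic to either $\Omega_1$ or $\Omega_2$, and can be brought to one of these two forms using row, column, and symbol permutations that fix column~0 and symbol~0. The key point is that these operations act on the last four rows/columns of $L$ and on the symbols $\{0,1,2,3\}$, and are exactly the operations permitted in $G$ (symbol permutations confined to $\{0,1,2,3\}$, column permutations of the last four columns); crucially they operate only on $L$'s \emph{rows and columns}, which correspond to the \emph{columns} of the transversal representations $P$ and $Q$ via the duality, so applying them to $(P,L)$ and $(Q,L)$ simultaneously keeps all three pairs $(P,Q)$, $(P,L)$, $(Q,L)$ TRPs and does not disturb the symbol partition $\{0,1,2,3\}\cup\{4,\dotsc,9\}$.

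Next I would sort the rows of $P$ (and independently of $Q$) by transversal type and then lexicographically within each type. Since row permutations of $P$ and of $Q$ act independently in $G$ and do not affect the subsquare structure just fixed, this is a free move: I simply choose the permutation of rows of $P$ that realizes the required ordering, and likewise for $Q$. This establishes the first two clauses of Definition~\ref{normal form}.

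The remaining and most delicate step is to normalize the first row of $P$ into one of the three prescribed patterns. After the sorting step, the first row of $P$ has minimal transversal type; by Lemma~\ref{lem:darkcount} and Table~\ref{tbl:transtypes} every admissible type has $n_1\geq4$, so a type-$p_1$ row exists and sits first. A type-$p_1$ row has exactly one white cell among its last four columns and no dark cells, so its first six entries are three whites (symbols from $\{0,1,2,3\}$) and three lights (symbols from $\{4,\dotsc,9\}$), and its last four entries are one white plus three lights. I would use the still-available symbol permutations of $\{4,\dotsc,9\}$ and column permutations of the first six and last four columns (applied simultaneously to $P$, $Q$, and $L$) to force the three light symbols in the first six columns to be $4,5,6$ in columns $3,4,5$, and the three light symbols in the last four columns to be $7,8,9$ in columns $7,8,9$, leaving three white symbols of $\{0,1,2,3\}$ in columns $0,1,2$ and the fourth white symbol in column~6. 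Then permuting the four white symbols $\{0,1,2,3\}$ so that they appear in increasing order across columns $0,1,2,6$ leaves exactly the three listed possibilities, according to which of $\{1,2,3\}$ lands in column~6.

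The main obstacle is verifying that this first-row normalization can be carried out \emph{without undoing} the subsquare normalization or the row-sorting. I expect this to require a careful accounting of which generators of $G$ remain free after the earlier steps: the subsquare fixing consumes only permutations acting on $L$'s last four rows and columns and on symbols $\{0,1,2,3\}$ in a constrained way, while the first-row normalization additionally needs symbol permutations of $\{4,\dotsc,9\}$ and column permutations; I would argue these commute with, or at least do not disturb, the subsquare's canonical form (since relabeling symbols $\{4,\dotsc,9\}$ and permuting columns within the first-six/last-four blocks leave $\Omega_1$ and $\Omega_2$ fixed). Because the final symbol relabeling of $\{0,1,2,3\}$ to sort the first row could in principle alter $\Omega$, I would invoke Lemma~\ref{lem:4x4}'s guarantee that the reduction to $\Omega_1$ or $\Omega_2$ can be arranged \emph{compatibly} with whatever symbol~0 fixing is needed, and re-sort the rows (a free move) at the very end to restore the lexicographic ordering if the first-row operations perturbed it. Assembling these observations shows the orbit of $(P,Q)$ under $G$ meets the normal form, completing the proof.
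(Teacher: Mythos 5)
There is a genuine gap, and it is concentrated exactly where you flag the ``main obstacle.'' Your order of operations creates a circularity that your proposed fixes do not resolve. You normalize the subsquare $\Omega$ \emph{first}, but the subsequent normalization of the first row of $P$ requires (i) a permutation of the last four columns to move the single white cell of that row into column~6, and (ii) a permutation of the symbols $\{0,1,2,3\}$ to arrange the white entries. Both act nontrivially on $\Omega$: your parenthetical claim that column permutations within the last-four block ``leave $\Omega_1$ and $\Omega_2$ fixed'' is false (they permute the columns of $\Omega$, and neither $\Omega_1$ nor $\Omega_2$ is invariant under such permutations), and Lemma~\ref{lem:4x4} does not provide the ``compatibility'' you invoke --- it only guarantees that the reduction to $\Omega_1$ or $\Omega_2$ can avoid permuting column~0 of $\Omega$ and symbol~0, a guarantee that has no force unless column~6 and symbol~0 have \emph{already} been pinned down by the first-row normalization. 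If you re-apply Lemma~\ref{lem:4x4} after disturbing $\Omega$, the symbol permutations of $\{1,2,3\}$ it uses will in turn un-sort the first row of $P$, and the loop does not obviously terminate.

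The paper's proof breaks this circularity by reversing your order: sort the rows, position the colours of the first row of $P$ as (3~white, 3~light, 1~white, 3~light) so that the lone white cell among the last four columns sits in column~6, and force $P[0,0]=0$; \emph{only then} invoke Lemma~\ref{lem:4x4}, whose ``no column~0, no symbol~0'' clause is tailored to leave exactly that structure untouched. A second, related error: after $\Omega$ is fixed there is no remaining freedom to permute $\{1,2,3\}$, so the paper sorts $P[0,1]$ and $P[0,2]$ by swapping \emph{columns}~1 and~2 (harmless to $\Omega$) and leaves $P[0,6]$ alone --- which is precisely why Definition~\ref{normal form} admits three first rows rather than one. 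Your final step of ``permuting the four white symbols so that they appear in increasing order across columns $0,1,2,6$'' would collapse the normal form to a single case if it were available, and it is not available without re-disturbing $\Omega$; this indicates the mechanism producing the three cases has been misidentified.
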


\begin{proof}
Let $(P,Q)$ be a coloured TRP satisfying the preconditions
of the theorem that we want to transform to a pair in normal form.
First, permute the rows of $P$ to put together rows
of the same transversal type~$p_i$ (for $i\in \{1,2,3,4\}$)
such that all rows of type $p_k$ come before
all rows of type $p_{k'}$ when $k<k'$.  Next, permute the rows of $Q$
in a similar fashion so the rows of $Q$ are also sorted by transversal type.

Since all square types contain transversals of type $p_1$,
and none contain transversals of type $p_0$, the
above sorting process implies the first row of $P$ is of type $p_1$.
Now use column permutations of the first six columns (and the last
four columns) to position the colours of the first row of $P$
in the following order: 3~white, 3 light, 1 white, 3 light.
Following this, if the symbol of $P$ in the upper-left corner
is not symbol~0, use a symbol permutation to make it 0.

By Lemma~\ref{lem:4x4},
we can now use symbol permutations of $\{1,2,3\}$,
simultaneous permutations of the last three columns of $(P,Q,L)$, and row permutations in~$L$
to ensure that the lower-right $4\times4$ subsquare of $L$ is either $\Omega_1$
or $\Omega_2$.
Row permutations of $L$, symbol permutations of $\{1,2,3\}$, and column
permutations of the last three columns will not disturb
the colouring of the first row of $P$ or the fact $P[0,0]=0$.

Afterward, apply permutations of the symbols $\{4,\dotsc,9\}$
to $P$, $Q$, and $L$ simultaneously to put the light
entries of the first row of $P$ into normal form.  If $P[0,1]$
and $P[0,2]$ are not in ascending order, use a column permutation
to sort them.  As a result, the first three entries of the
first row of $P$ are now $[0,1,2]$, $[0,1,3]$, or $[0,2,3]$,
so the first row of $P$ is in one of the three cases
given in Definition~\ref{normal form}.

Finally, within each subset of rows of the same transversal type of $P$ (and
independently $Q$),
permute the rows so they appear in increasing lexicographic order.
The first row of $P$ already begins with the symbol~$0$,
so it will not be moved. \qed
\end{proof}

Thus, without loss of generality we can assume the TRP we are searching
for is in normal form and so we add extra constraints into our encoding
to enforce this.
Fixing the lightly coloured entries in the first row of $P$
and the $(0,0)$th symbol of $P$
can be done by adding appropriate unit clauses (clauses of length~1),
namely,
\[ P_{0,0,0}\land\bigwedge_{3\leq j\leq 5}P_{0,j,j+1}\land\bigwedge_{7\leq j\leq 9}P_{0,j,j} . \]
The remaining entries in the first row of $P$ are determined by
the value of $P[0,6]$,
giving the constraints
\[ P_{0,6,3}\limp (P_{0,1,1}\land P_{0,2,2}), P_{0,6,2}\limp (P_{0,1,1}\land P_{0,2,3}), \text{ and } P_{0,6,1}\limp (P_{0,1,2}\land P_{0,2,3}) . \]
Each constraint $x\limp (y\land z)$ is broken into two clauses of length two ($x\limp y$ and $x\limp z$).
Although not strictly necessary, clauses
for other facts about the white entries in the first row of $P$,
such as $P_{0,1,2}\limp P_{0,2,3}$, are also included.

Enforcing the fact that rows are sorted by transversal type is done with the cardinality constraints
discussed in Section~\ref{sec:colour}, as these constraints allow us to fix which rows are of which types.
For example, suppose that $P$ is of type R, meaning that $P$ consists of eight transversals of type $p_1$
and two transversals of type $p_4$.  Then we would enforce the first eight rows of $P$ to be of type $p_1$
with $P_{i,6,\w}+\dotsb+P_{i,9,\w}=1$ and $P_{i,0,\d}+\dotsb+P_{i,5,\d}=0$ for $0\leq i<8$, and the last two rows of $P$ to be of type $p_4$
with $P_{i,6,\w}+\dotsb+P_{i,9,\w}=4$ and $P_{i,0,\d}+\dotsb+P_{i,5,\d}=6$ for $i=8$ and~$9$.

Finally, we enforce that rows with the same transversal type in $P$ are sorted in lexicographic order
by ensuring their initial entries are increasing.  For example, suppose rows $i$ and $i+1$
of $P$ have the same transversal type.  Then we add the constraint
$P_{i,0,k}\limp\lnot P_{i+1,0,l}$ for all $0\leq l<k<n$, which says that the initial
entry of row $i+1$ of $P$ cannot be smaller than the initial entry of row $i$.
We add the same constraints for $Q$ as well.

\subsection{Postprocessing}\label{sec:extendability}

As we will describe in Section~\ref{sec:results}, the encoding presented thus far
successfully found many TRPs $(P,Q)$ corresponding to Myrvold's eight unsolved cases.
We performed some postprocessing on these pairs to check if they were extendable to
a triple of mutual transversal representations and also to check the pairs for
equivalence.

First, we used a SAT solver to check all pairs $(P,Q)$ for extendability to a triple.
This was done by creating new SAT instances for each pair encoding
both squares $P$ and $Q$, along with a new Latin square $L$, and then asserting that $(L,P)$
is a TRP and $(L,Q)$ is a TRP by using the encoding described in Section~\ref{sec:TRPC}
twice.  The entries of $P$ and $Q$ were specified using unit clauses; i.e., if
$P[i,j]=k$ then the clause $P_{i,j,k}$ was added to the SAT instance.  Because
of the presence of so many unit clauses these instances were highly constrained
and in all cases were shown by the SAT solver to be unsatisfiable within 0.1 seconds.
Thus, no pairs we found were extendable to a triple.  However, this does not eliminate
the possibility that there might exist a triple $(P,Q,L)$ corresponding to some of
Myrvold's cases, because we did not exhaustively enumerate all $(P,Q)$s for any
of Myrvold's unsolved types.

Finally, we checked all the TRPs $(P,Q)$ that we found to see if any were equivalent
to each other.  This was done by converting the TRP into its orthogonal pair representation
$(P^{-1}Q,Q)$, reducing the orthogonal pair to a graph using the reduction given
by Egan and Wanless~\cite{Egan2015}, and finally checking the graphs for equivalence
using the graph isomorphism tool \textsc{nauty}~\cite{mckay2014practical}.

Precisely, the reduction from a $\MOLS{(k-2)}{n}$ to a graph is described using
what is known as an orthogonal array.  An orthogonal array for a $\MOLS{(k-2)}{n}$ is a matrix $O$ of
size $n^2 \times k$, with entries in $\{0,\dotsc,n-1\}$,
with every possible pair of symbols appearing exactly once in any two columns of $O$.
Define an undirected graph $G_O$ corresponding to $O$. The vertices of $G_O$ are of three types:
\begin{itemize}
    \item[$\bullet$] $k$ type $1$ vertices that correspond to the columns of $O$,
    \item[$\bullet$] $kn$ type $2$ vertices that correspond to the symbols in each of the columns of $O$, and
    \item[$\bullet$] $n^2$ type $3$ vertices that correspond to the rows of $O$.
\end{itemize}
Each type $1$ vertex is joined to the $n$ type $2$ vertices that correspond to the symbols in its column. Each type $3$ vertex is connected to the $k$ type $2$ vertices that correspond to the symbols in its row.
Vertices are coloured according to their type so that isomorphisms are not allowed to change the type of a vertex.

After forming the graphs corresponding to all TRPs $(P,Q)$ we found, \textsc{nauty} determined that no two graphs were
isomorphic.  Thus, we have confirmation that the SAT solver is indeed exploring different parts of the search space
and that multiple inequivalent TRPs exist corresponding to Myrvold's unsolved cases.
However, we did not attempt to perform an exhaustive
search for TRPs in any of Myrvold's unsolved cases.
Given the enormity of the search space, and the fact that no solutions were repeated even after several hundred
solutions had already been found, we suspect that an exhaustive search would require
a huge amount of additional computational resources or at least some more restrictive properties
that could be applied to Myrvold's unsolved cases.

\section{Results}\label{sec:results}

We now discuss the results of our computational investigation into
Myrvold's results.  The computations were performed using the SAT solver
Kissat 4.0.4~\cite{biere2022gimsatul} run on AMD EPYC Zen~5
processors running at 2.7~GHz and equipped with
1~GiB of memory.

Recall Myrvold showed~\cite[Thm~4.4]{myrvold1999negative},
if $P$ and $Q$ are both transversal representations of a Latin square of order ten containing
a subsquare of order four, then up to ordering there are twenty-eight possible cases
for $P$ and $Q$ and twenty of these cases can be ruled out.
The eight possible cases Myrvold left remaining are
$(\S,\X)$, $(\U,\U)$, $(\U,\W)$, $(\U,\X)$, $(\V,\X)$, $(\W,\W)$, $(\W,\X)$, and $(\X,\X)$.

We used our SAT encoding to generate twenty-eight SAT instances, one for each of Myrvold's cases.
The twenty cases ruled out by Myrvold were each found
to be unsatisfiable in under 0.2 seconds.  The eight cases left
open by Myrvold were all considerably harder to solve, but 
each was found to be satisfiable,
explaining why Myrvold was unable
to eliminate these eight cases from consideration.
Kissat stops solving as soon
as it finds a satisfying assignment of the provided instance,
and we use the satisfying assignment reported by Kissat to form
a coloured TRP in each of the eight cases (see the \hyperref[sec:appendix]{Appendix}
for explicit examples of TRPs in each case).

Because the satisfiable cases were significantly more difficult
than the unsatisfiable cases, we found it useful to exploit parallelization
when solving the satisfiable instances.  We started 49 independent Kissat processes
for each satisfiable case
and each process was run on one processor core for up to one week.  Each process
was provided with a different random seed, so no two copies of Kissat would
make the same choices during the solving process.
Each process was terminated if Kissat did not find a solution within a week.
Results from these searches are available in Table~\ref{tbl:summary},
and a scatterplot of the running times is given in Figure~\ref{fig:runtimes}.
There is a significant amount of variance in the running times,
but in general the case $(\U,\U)$ was the easiest to solve and the case $(\X,\X)$
was the hardest to solve.

We summarize some statistical information about the TRPs we found in Table~\ref{tbl:stats}.
In particular, for each pair type we provide the number of TRPs found that are
compatible with the $4\times4$ subsquares $\Omega_1$ and $\Omega_2$ in $L$.
In case $(\V,\X)$, the solver was able to show there are no TRPs consistent
with the choice $\Omega_1$ in under 0.2 seconds.  This can be explained by the fact that
the square~$\Omega_1$ has no transversals---it follows that~$\Omega_1$ is inconsistent
with square type~$\V$, because the white entries in a row of type $p_4$
must represent a transversal in $\Omega$.

Usually the TRPs we found were consistent with only one of $\Omega_1$ or~$\Omega_2$,
but two TRPs were consistent
with both choices of~$\Omega$ simultaneously.  Both were of type $(\X,\X)$ and
one of these TRPs is provided as the example $(\X,\X)$ pair in the appendix.
Also listed in Table~\ref{tbl:stats} are the minimum and maximum
number of transversals and mates in each of the
squares in the TRPs we found.
It also reports on the number of
\emph{common} transversals in the TRPs (i.e., transversals of both squares in the TRP whose
row representation is the same in both).
Most TRPs had no common transversals,
and none had more than two common transversals.
This is an indication that the TRPs we found
are not very close to extending to a triple of mutual TRPs,
since for $(P,Q)$ to extend to a triple of mutual
TRPs, $P$ and $Q$ must have at least $n$ common transversals.

\begin{table}
\caption{A summary of the running times (in seconds) of the instances
for each of the eight pair types with solutions.
Each pair type had 49 independently-solved SAT instances and
were run with a one week timeout.
The timeouts were included in the computation of each statistic and
counted as running for a full week.}\label{tbl:summary}
\begin{center}
\begin{tabular}{c@{\quad}c@{\quad}c@{\quad}c@{\quad}c}
pair type           &       mean &     median &        min &        max \\
$(\U,\U)$           &  \031102.1 &  \019619.4 &    \0748.6 &  \098009.2 \\
$(\S,\X)$           &  \058780.5 &  \038453.2 &     2282.4 &   175005.1 \\
$(\U,\W)$           &  \075043.9 &  \056171.4 &     2659.8 &   399428.7 \\
$(\W,\W)$           &   139198.5 &  \097661.6 &     1662.1 &    timeout \\
$(\V,\X)$           &   147169.2 &   114191.0 &     2567.7 &    timeout \\
$(\U,\X)$           &   140560.4 &   117378.6 &    \0327.8 &    timeout \\
$(\W,\X)$           &   222515.7 &   176970.4 &    \0527.3 &    timeout \\
$(\X,\X)$           &   429809.6 &   580524.5 &     6747.1 &    timeout \\
\end{tabular}
\end{center}
\end{table}

\begin{table}
\caption{A summary of the TRPs we found using
49 independently-solved SAT instances for each pair type.  The table
includes the number of solved SAT instances, the number of TRPs
compatible with the $4\times4$ subsquares $\Omega_1$ and $\Omega_2$,
and the minimum and maximum number of transversals, mates,
and common transversals appearing in the TRPs.}\label{tbl:stats}
\begin{center}
\begin{tabular}{c@{\quad}c@{\quad}c@{\quad}c@{\quad}c@{\quad}c@{\quad}c}
pair type           & \# solved      & \#$\Omega_1$ & \#$\Omega_2$ & transversals & mates & common trans. \\
$(\U,\U)$ &            49&             9&            40&      776--900&          1--6&          0--1 \\
$(\S,\X)$ &            49&            27&            22&      768--948&          1--7&          0--1 \\
$(\U,\W)$ &            49&            19&            30&      744--912&          1--5&          0--0 \\
$(\W,\W)$ &            48&            25&            23&      764--900&          1--5&          0--1 \\
$(\V,\X)$ &            48&             0&            48&      756--940&          1--8&          0--2 \\
$(\U,\X)$ &            48&            20&            28&      724--924&          1--6&          0--1 \\
$(\W,\X)$ &            46&            23&            23&      772--924&          1--9&          0--2 \\
$(\X,\X)$ &            25&            13&            14&      772--912&          1--5&          0--1 \\
\end{tabular}
\end{center}
\end{table}

\begin{figure}
\centering
\includegraphics[width=0.9\textwidth]{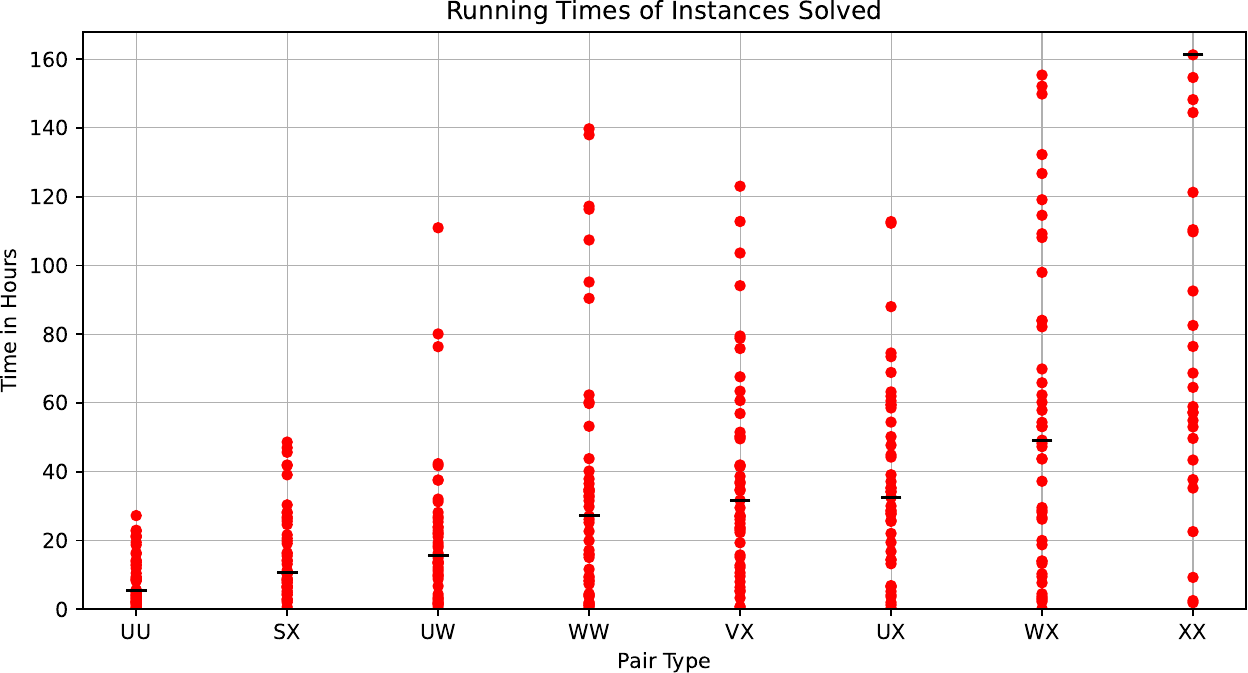}
\caption{A scatterplot of the solver's running time for each pair type.
The median running time is shown as a solid black line.
Timeouts are not plotted
but are used in determining the median.}
\label{fig:runtimes}
\end{figure}

\section{Conclusion}\label{Conclusion}

In this paper we use a satisfiability (SAT) solver to investigate Myrvold's nonexistence results~\cite{myrvold1999negative}
on orthogonal triples of Latin squares of order ten.  The SAT solver almost instantaneously
rules out the cases that Myrvold ruled out, and more significantly, the SAT solver
provides explicit examples of Latin square pairs in each of the cases that
Myrvold was unable to rule out---providing an explanation for why Myrvold
was unable to rule out these cases and determining a negative resolution
to the following question left open by Myrvold:
\begin{quote}
\emph{Possibly, with a bit more ingenuity, the remaining cases can be eliminated.}
\end{quote}
We show that pairs exist in the remaining cases, and so eliminating the
remaining cases with ``a bit more ingenuity'' is probably
not achievable---at the very least,
any argument required to eliminate the remaining cases
would need to be more sophisticated in having to
rely on the existence of the third square,~$L$.
We were also able to show that requiring compatibility
with the $4\times4$ Latin subsquare in~$L$ is not by itself
sufficient to rule out any of the remaining cases.
It would be interesting to know if some of the remaining cases
could be ruled out by considering additional structure in
$L$, but we leave this as future work.

In order to derive a concise and effective SAT encoding for our
search we make use of a duality between orthogonal Latin squares
and transversal representation pairs.  Although such a duality has
long been used in searches for Latin squares, we also give an explicit formulation
of how this duality arises via a composition operation on Latin squares.
We found this viewpoint useful when deriving our encoding
and surprisingly we were not able to find it expressed in prior literature.

\paragraph{Acknowledgements}
We thank the reviewers for their detailed feedback which improved the
paper.  In particular, a reviewer pointed out the possibility of adding
constraints enforcing that the transversal representation pair is
consistent with the $4\times4$ Latin subsquare in $L$.  We also
thank Tanbir Ahmed for his help during the editing process.

\bibliographystyle{splncs04}
\bibliography{bibliography}
 
\section*{Appendix}\label{sec:appendix}

In the appendix we provide eight explicit pairs we found which prove the existence of TRPs
for Myrvold's eight unresolved cases~\cite{myrvold1999negative}.

\begin{figure}[ht]
  \begin{minipage}[c][1\width]{0.44\textwidth}
  \centering
  \begin{tikzpicture}
  \matrix (m) [matrix of nodes,nodes={element},column sep=-\pgflinewidth, row sep=-\pgflinewidth, label={[font=\large]above: type S}]{
  |[draw,fill=white]| 0 &
  |[draw,fill=white]| 1 &
  |[draw,fill=white]| 3 &
  |[draw,fill=light]| \textcolor{black}{4} &
  |[draw,fill=light]| \textcolor{black}{5} &
  |[draw,fill=light]| \textcolor{black}{6} &
  |[draw,fill=white]| 2 &
  |[draw,fill=light]| \textcolor{black}{7} &
  |[draw,fill=light]| \textcolor{black}{8} &
  |[draw,fill=light]| \textcolor{black}{9} \\
  |[draw,fill=white]| 1 &
  |[draw,fill=light]| \textcolor{black}{6} &
  |[draw,fill=light]| \textcolor{black}{9} &
  |[draw,fill=white]| 2 &
  |[draw,fill=white]| 3 &
  |[draw,fill=light]| \textcolor{black}{4} &
  |[draw,fill=white]| 0 &
  |[draw,fill=light]| \textcolor{black}{8} &
  |[draw,fill=light]| \textcolor{black}{5} &
  |[draw,fill=light]| \textcolor{black}{7} \\
  |[draw,fill=white]| 3 &
  |[draw,fill=light]| \textcolor{black}{8} &
  |[draw,fill=white]| 2 &
  |[draw,fill=white]| 1 &
  |[draw,fill=light]| \textcolor{black}{7} &
  |[draw,fill=light]| \textcolor{black}{5} &
  |[draw,fill=light]| \textcolor{black}{6} &
  |[draw,fill=light]| \textcolor{black}{4} &
  |[draw,fill=light]| \textcolor{black}{9} &
  |[draw,fill=white]| 0 \\
  |[draw,fill=light]| \textcolor{black}{4} &
  |[draw,fill=white]| 2 &
  |[draw,fill=light]| \textcolor{black}{7} &
  |[draw,fill=white]| 0 &
  |[draw,fill=white]| 1 &
  |[draw,fill=light]| \textcolor{black}{8} &
  |[draw,fill=white]| 3 &
  |[draw,fill=light]| \textcolor{black}{9} &
  |[draw,fill=light]| \textcolor{black}{6} &
  |[draw,fill=light]| \textcolor{black}{5} \\
  |[draw,fill=light]| \textcolor{black}{5} &
  |[draw,fill=white]| 0 &
  |[draw,fill=light]| \textcolor{black}{8} &
  |[draw,fill=light]| \textcolor{black}{9} &
  |[draw,fill=white]| 2 &
  |[draw,fill=white]| 3 &
  |[draw,fill=light]| \textcolor{black}{7} &
  |[draw,fill=white]| 1 &
  |[draw,fill=light]| \textcolor{black}{4} &
  |[draw,fill=light]| \textcolor{black}{6} \\
  |[draw,fill=light]| \textcolor{black}{7} &
  |[draw,fill=white]| 3 &
  |[draw,fill=white]| 0 &
  |[draw,fill=light]| \textcolor{black}{5} &
  |[draw,fill=light]| \textcolor{black}{9} &
  |[draw,fill=white]| 1 &
  |[draw,fill=light]| \textcolor{black}{4} &
  |[draw,fill=light]| \textcolor{black}{6} &
  |[draw,fill=white]| 2 &
  |[draw,fill=light]| \textcolor{black}{8} \\
  |[draw,fill=light]| \textcolor{black}{8} &
  |[draw,fill=light]| \textcolor{black}{5} &
  |[draw,fill=white]| 1 &
  |[draw,fill=light]| \textcolor{black}{6} &
  |[draw,fill=white]| 0 &
  |[draw,fill=white]| 2 &
  |[draw,fill=light]| \textcolor{black}{9} &
  |[draw,fill=white]| 3 &
  |[draw,fill=light]| \textcolor{black}{7} &
  |[draw,fill=light]| \textcolor{black}{4} \\
  |[draw,fill=white]| 2 &
  |[draw,fill=dark]| \textcolor{white}{4} &
  |[draw,fill=light]| \textcolor{black}{5} &
  |[draw,fill=dark]| \textcolor{white}{7} &
  |[draw,fill=dark]| \textcolor{white}{6} &
  |[draw,fill=dark]| \textcolor{white}{9} &
  |[draw,fill=light]| \textcolor{black}{8} &
  |[draw,fill=white]| 0 &
  |[draw,fill=white]| 3 &
  |[draw,fill=white]| 1 \\
  |[draw,fill=dark]| \textcolor{white}{6} &
  |[draw,fill=light]| \textcolor{black}{9} &
  |[draw,fill=dark]| \textcolor{white}{4} &
  |[draw,fill=white]| 3 &
  |[draw,fill=dark]| \textcolor{white}{8} &
  |[draw,fill=dark]| \textcolor{white}{7} &
  |[draw,fill=white]| 1 &
  |[draw,fill=light]| \textcolor{black}{5} &
  |[draw,fill=white]| 0 &
  |[draw,fill=white]| 2 \\
  |[draw,fill=dark]| \textcolor{white}{9} &
  |[draw,fill=dark]| \textcolor{white}{7} &
  |[draw,fill=dark]| \textcolor{white}{6} &
  |[draw,fill=dark]| \textcolor{white}{8} &
  |[draw,fill=light]| \textcolor{black}{4} &
  |[draw,fill=white]| 0 &
  |[draw,fill=light]| \textcolor{black}{5} &
  |[draw,fill=white]| 2 &
  |[draw,fill=white]| 1 &
  |[draw,fill=white]| 3 \\
  };\end{tikzpicture}
  \end{minipage}
  \hfill
  \begin{minipage}[c][1\width]{0.44\textwidth}
  \centering
  \begin{tikzpicture}
  \matrix (m) [matrix of nodes,nodes={element},column sep=-\pgflinewidth, row sep=-\pgflinewidth, label={[font=\large]above: type X}]{
  |[draw,fill=white]| 2 &
  |[draw,fill=light]| \textcolor{black}{6} &
  |[draw,fill=white]| 1 &
  |[draw,fill=light]| \textcolor{black}{4} &
  |[draw,fill=light]| \textcolor{black}{7} &
  |[draw,fill=white]| 3 &
  |[draw,fill=light]| \textcolor{black}{5} &
  |[draw,fill=light]| \textcolor{black}{9} &
  |[draw,fill=white]| 0 &
  |[draw,fill=light]| \textcolor{black}{8} \\
  |[draw,fill=light]| \textcolor{black}{4} &
  |[draw,fill=light]| \textcolor{black}{9} &
  |[draw,fill=white]| 3 &
  |[draw,fill=light]| \textcolor{black}{5} &
  |[draw,fill=white]| 2 &
  |[draw,fill=white]| 0 &
  |[draw,fill=light]| \textcolor{black}{6} &
  |[draw,fill=light]| \textcolor{black}{8} &
  |[draw,fill=light]| \textcolor{black}{7} &
  |[draw,fill=white]| 1 \\
  |[draw,fill=light]| \textcolor{black}{7} &
  |[draw,fill=white]| 1 &
  |[draw,fill=white]| 2 &
  |[draw,fill=light]| \textcolor{black}{9} &
  |[draw,fill=white]| 0 &
  |[draw,fill=light]| \textcolor{black}{4} &
  |[draw,fill=light]| \textcolor{black}{8} &
  |[draw,fill=light]| \textcolor{black}{5} &
  |[draw,fill=light]| \textcolor{black}{6} &
  |[draw,fill=white]| 3 \\
  |[draw,fill=light]| \textcolor{black}{8} &
  |[draw,fill=white]| 0 &
  |[draw,fill=light]| \textcolor{black}{5} &
  |[draw,fill=white]| 3 &
  |[draw,fill=white]| 1 &
  |[draw,fill=light]| \textcolor{black}{6} &
  |[draw,fill=light]| \textcolor{black}{4} &
  |[draw,fill=white]| 2 &
  |[draw,fill=light]| \textcolor{black}{9} &
  |[draw,fill=light]| \textcolor{black}{7} \\
  |[draw,fill=white]| 0 &
  |[draw,fill=light]| \textcolor{black}{5} &
  |[draw,fill=light]| \textcolor{black}{7} &
  |[draw,fill=dark]| \textcolor{white}{8} &
  |[draw,fill=white]| 3 &
  |[draw,fill=dark]| \textcolor{white}{9} &
  |[draw,fill=white]| 1 &
  |[draw,fill=light]| \textcolor{black}{4} &
  |[draw,fill=white]| 2 &
  |[draw,fill=light]| \textcolor{black}{6} \\
  |[draw,fill=white]| 1 &
  |[draw,fill=dark]| \textcolor{white}{4} &
  |[draw,fill=dark]| \textcolor{white}{6} &
  |[draw,fill=white]| 0 &
  |[draw,fill=light]| \textcolor{black}{9} &
  |[draw,fill=light]| \textcolor{black}{5} &
  |[draw,fill=light]| \textcolor{black}{7} &
  |[draw,fill=white]| 3 &
  |[draw,fill=light]| \textcolor{black}{8} &
  |[draw,fill=white]| 2 \\
  |[draw,fill=white]| 3 &
  |[draw,fill=dark]| \textcolor{white}{7} &
  |[draw,fill=light]| \textcolor{black}{9} &
  |[draw,fill=light]| \textcolor{black}{6} &
  |[draw,fill=dark]| \textcolor{white}{8} &
  |[draw,fill=white]| 1 &
  |[draw,fill=white]| 2 &
  |[draw,fill=white]| 0 &
  |[draw,fill=light]| \textcolor{black}{4} &
  |[draw,fill=light]| \textcolor{black}{5} \\
  |[draw,fill=light]| \textcolor{black}{5} &
  |[draw,fill=white]| 3 &
  |[draw,fill=dark]| \textcolor{white}{4} &
  |[draw,fill=white]| 2 &
  |[draw,fill=dark]| \textcolor{white}{6} &
  |[draw,fill=light]| \textcolor{black}{8} &
  |[draw,fill=light]| \textcolor{black}{9} &
  |[draw,fill=light]| \textcolor{black}{7} &
  |[draw,fill=white]| 1 &
  |[draw,fill=white]| 0 \\
  |[draw,fill=dark]| \textcolor{white}{6} &
  |[draw,fill=light]| \textcolor{black}{8} &
  |[draw,fill=white]| 0 &
  |[draw,fill=dark]| \textcolor{white}{7} &
  |[draw,fill=light]| \textcolor{black}{4} &
  |[draw,fill=white]| 2 &
  |[draw,fill=white]| 3 &
  |[draw,fill=white]| 1 &
  |[draw,fill=light]| \textcolor{black}{5} &
  |[draw,fill=light]| \textcolor{black}{9} \\
  |[draw,fill=dark]| \textcolor{white}{9} &
  |[draw,fill=white]| 2 &
  |[draw,fill=light]| \textcolor{black}{8} &
  |[draw,fill=white]| 1 &
  |[draw,fill=light]| \textcolor{black}{5} &
  |[draw,fill=dark]| \textcolor{white}{7} &
  |[draw,fill=white]| 0 &
  |[draw,fill=light]| \textcolor{black}{6} &
  |[draw,fill=white]| 3 &
  |[draw,fill=light]| \textcolor{black}{4} \\
  };\end{tikzpicture}
  \end{minipage}
\end{figure}
\begin{figure}[ht]
  \begin{minipage}[c][1\width]{0.44\textwidth}
  \centering
  \begin{tikzpicture}
  \matrix (m) [matrix of nodes,nodes={element},column sep=-\pgflinewidth, row sep=-\pgflinewidth, label={[font=\large]above: type U}]{
  |[draw,fill=white]| 0 &
  |[draw,fill=white]| 1 &
  |[draw,fill=white]| 2 &
  |[draw,fill=light]| \textcolor{black}{4} &
  |[draw,fill=light]| \textcolor{black}{5} &
  |[draw,fill=light]| \textcolor{black}{6} &
  |[draw,fill=white]| 3 &
  |[draw,fill=light]| \textcolor{black}{7} &
  |[draw,fill=light]| \textcolor{black}{8} &
  |[draw,fill=light]| \textcolor{black}{9} \\
  |[draw,fill=white]| 1 &
  |[draw,fill=light]| \textcolor{black}{7} &
  |[draw,fill=light]| \textcolor{black}{8} &
  |[draw,fill=white]| 0 &
  |[draw,fill=light]| \textcolor{black}{4} &
  |[draw,fill=white]| 2 &
  |[draw,fill=light]| \textcolor{black}{5} &
  |[draw,fill=light]| \textcolor{black}{9} &
  |[draw,fill=white]| 3 &
  |[draw,fill=light]| \textcolor{black}{6} \\
  |[draw,fill=white]| 2 &
  |[draw,fill=light]| \textcolor{black}{9} &
  |[draw,fill=light]| \textcolor{black}{7} &
  |[draw,fill=white]| 1 &
  |[draw,fill=light]| \textcolor{black}{8} &
  |[draw,fill=white]| 3 &
  |[draw,fill=white]| 0 &
  |[draw,fill=light]| \textcolor{black}{4} &
  |[draw,fill=light]| \textcolor{black}{6} &
  |[draw,fill=light]| \textcolor{black}{5} \\
  |[draw,fill=white]| 3 &
  |[draw,fill=light]| \textcolor{black}{5} &
  |[draw,fill=white]| 0 &
  |[draw,fill=light]| \textcolor{black}{7} &
  |[draw,fill=white]| 2 &
  |[draw,fill=light]| \textcolor{black}{4} &
  |[draw,fill=light]| \textcolor{black}{6} &
  |[draw,fill=light]| \textcolor{black}{8} &
  |[draw,fill=light]| \textcolor{black}{9} &
  |[draw,fill=white]| 1 \\
  |[draw,fill=light]| \textcolor{black}{4} &
  |[draw,fill=white]| 2 &
  |[draw,fill=light]| \textcolor{black}{6} &
  |[draw,fill=white]| 3 &
  |[draw,fill=white]| 0 &
  |[draw,fill=light]| \textcolor{black}{9} &
  |[draw,fill=light]| \textcolor{black}{7} &
  |[draw,fill=white]| 1 &
  |[draw,fill=light]| \textcolor{black}{5} &
  |[draw,fill=light]| \textcolor{black}{8} \\
  |[draw,fill=light]| \textcolor{black}{9} &
  |[draw,fill=light]| \textcolor{black}{6} &
  |[draw,fill=white]| 1 &
  |[draw,fill=white]| 2 &
  |[draw,fill=white]| 3 &
  |[draw,fill=light]| \textcolor{black}{7} &
  |[draw,fill=light]| \textcolor{black}{8} &
  |[draw,fill=light]| \textcolor{black}{5} &
  |[draw,fill=white]| 0 &
  |[draw,fill=light]| \textcolor{black}{4} \\
  |[draw,fill=dark]| \textcolor{white}{5} &
  |[draw,fill=dark]| \textcolor{white}{8} &
  |[draw,fill=white]| 3 &
  |[draw,fill=light]| \textcolor{black}{6} &
  |[draw,fill=light]| \textcolor{black}{9} &
  |[draw,fill=white]| 1 &
  |[draw,fill=light]| \textcolor{black}{4} &
  |[draw,fill=white]| 0 &
  |[draw,fill=white]| 2 &
  |[draw,fill=light]| \textcolor{black}{7} \\
  |[draw,fill=light]| \textcolor{black}{8} &
  |[draw,fill=white]| 3 &
  |[draw,fill=light]| \textcolor{black}{4} &
  |[draw,fill=dark]| \textcolor{white}{9} &
  |[draw,fill=white]| 1 &
  |[draw,fill=dark]| \textcolor{white}{5} &
  |[draw,fill=white]| 2 &
  |[draw,fill=light]| \textcolor{black}{6} &
  |[draw,fill=light]| \textcolor{black}{7} &
  |[draw,fill=white]| 0 \\
  |[draw,fill=light]| \textcolor{black}{6} &
  |[draw,fill=white]| 0 &
  |[draw,fill=dark]| \textcolor{white}{9} &
  |[draw,fill=dark]| \textcolor{white}{5} &
  |[draw,fill=dark]| \textcolor{white}{7} &
  |[draw,fill=dark]| \textcolor{white}{8} &
  |[draw,fill=white]| 1 &
  |[draw,fill=white]| 2 &
  |[draw,fill=light]| \textcolor{black}{4} &
  |[draw,fill=white]| 3 \\
  |[draw,fill=dark]| \textcolor{white}{7} &
  |[draw,fill=dark]| \textcolor{white}{4} &
  |[draw,fill=dark]| \textcolor{white}{5} &
  |[draw,fill=light]| \textcolor{black}{8} &
  |[draw,fill=dark]| \textcolor{white}{6} &
  |[draw,fill=white]| 0 &
  |[draw,fill=light]| \textcolor{black}{9} &
  |[draw,fill=white]| 3 &
  |[draw,fill=white]| 1 &
  |[draw,fill=white]| 2 \\
  };\end{tikzpicture}
  \end{minipage}
  \hfill
  \begin{minipage}[c][1\width]{0.44\textwidth}
  \centering
  \begin{tikzpicture}
  \matrix (m) [matrix of nodes,nodes={element},column sep=-\pgflinewidth, row sep=-\pgflinewidth, label={[font=\large]above: type U}]{
  |[draw,fill=white]| 0 &
  |[draw,fill=white]| 3 &
  |[draw,fill=light]| \textcolor{black}{6} &
  |[draw,fill=white]| 1 &
  |[draw,fill=light]| \textcolor{black}{9} &
  |[draw,fill=light]| \textcolor{black}{7} &
  |[draw,fill=light]| \textcolor{black}{5} &
  |[draw,fill=light]| \textcolor{black}{8} &
  |[draw,fill=light]| \textcolor{black}{4} &
  |[draw,fill=white]| 2 \\
  |[draw,fill=white]| 2 &
  |[draw,fill=light]| \textcolor{black}{5} &
  |[draw,fill=white]| 1 &
  |[draw,fill=light]| \textcolor{black}{8} &
  |[draw,fill=white]| 0 &
  |[draw,fill=light]| \textcolor{black}{6} &
  |[draw,fill=light]| \textcolor{black}{4} &
  |[draw,fill=light]| \textcolor{black}{9} &
  |[draw,fill=light]| \textcolor{black}{7} &
  |[draw,fill=white]| 3 \\
  |[draw,fill=white]| 3 &
  |[draw,fill=white]| 0 &
  |[draw,fill=light]| \textcolor{black}{4} &
  |[draw,fill=light]| \textcolor{black}{6} &
  |[draw,fill=light]| \textcolor{black}{8} &
  |[draw,fill=white]| 2 &
  |[draw,fill=light]| \textcolor{black}{7} &
  |[draw,fill=light]| \textcolor{black}{5} &
  |[draw,fill=white]| 1 &
  |[draw,fill=light]| \textcolor{black}{9} \\
  |[draw,fill=light]| \textcolor{black}{6} &
  |[draw,fill=light]| \textcolor{black}{9} &
  |[draw,fill=white]| 2 &
  |[draw,fill=white]| 0 &
  |[draw,fill=white]| 1 &
  |[draw,fill=light]| \textcolor{black}{4} &
  |[draw,fill=light]| \textcolor{black}{8} &
  |[draw,fill=white]| 3 &
  |[draw,fill=light]| \textcolor{black}{5} &
  |[draw,fill=light]| \textcolor{black}{7} \\
  |[draw,fill=light]| \textcolor{black}{8} &
  |[draw,fill=light]| \textcolor{black}{7} &
  |[draw,fill=white]| 0 &
  |[draw,fill=white]| 3 &
  |[draw,fill=light]| \textcolor{black}{5} &
  |[draw,fill=white]| 1 &
  |[draw,fill=light]| \textcolor{black}{9} &
  |[draw,fill=white]| 2 &
  |[draw,fill=light]| \textcolor{black}{6} &
  |[draw,fill=light]| \textcolor{black}{4} \\
  |[draw,fill=light]| \textcolor{black}{9} &
  |[draw,fill=white]| 2 &
  |[draw,fill=white]| 3 &
  |[draw,fill=light]| \textcolor{black}{7} &
  |[draw,fill=light]| \textcolor{black}{4} &
  |[draw,fill=white]| 0 &
  |[draw,fill=white]| 1 &
  |[draw,fill=light]| \textcolor{black}{6} &
  |[draw,fill=light]| \textcolor{black}{8} &
  |[draw,fill=light]| \textcolor{black}{5} \\
  |[draw,fill=white]| 1 &
  |[draw,fill=light]| \textcolor{black}{6} &
  |[draw,fill=dark]| \textcolor{white}{5} &
  |[draw,fill=light]| \textcolor{black}{4} &
  |[draw,fill=dark]| \textcolor{white}{7} &
  |[draw,fill=white]| 3 &
  |[draw,fill=white]| 2 &
  |[draw,fill=white]| 0 &
  |[draw,fill=light]| \textcolor{black}{9} &
  |[draw,fill=light]| \textcolor{black}{8} \\
  |[draw,fill=dark]| \textcolor{white}{7} &
  |[draw,fill=white]| 1 &
  |[draw,fill=light]| \textcolor{black}{8} &
  |[draw,fill=dark]| \textcolor{white}{5} &
  |[draw,fill=white]| 3 &
  |[draw,fill=light]| \textcolor{black}{9} &
  |[draw,fill=light]| \textcolor{black}{6} &
  |[draw,fill=light]| \textcolor{black}{4} &
  |[draw,fill=white]| 2 &
  |[draw,fill=white]| 0 \\
  |[draw,fill=light]| \textcolor{black}{4} &
  |[draw,fill=dark]| \textcolor{white}{8} &
  |[draw,fill=dark]| \textcolor{white}{9} &
  |[draw,fill=white]| 2 &
  |[draw,fill=dark]| \textcolor{white}{6} &
  |[draw,fill=dark]| \textcolor{white}{5} &
  |[draw,fill=white]| 0 &
  |[draw,fill=light]| \textcolor{black}{7} &
  |[draw,fill=white]| 3 &
  |[draw,fill=white]| 1 \\
  |[draw,fill=dark]| \textcolor{white}{5} &
  |[draw,fill=dark]| \textcolor{white}{4} &
  |[draw,fill=light]| \textcolor{black}{7} &
  |[draw,fill=dark]| \textcolor{white}{9} &
  |[draw,fill=white]| 2 &
  |[draw,fill=dark]| \textcolor{white}{8} &
  |[draw,fill=white]| 3 &
  |[draw,fill=white]| 1 &
  |[draw,fill=white]| 0 &
  |[draw,fill=light]| \textcolor{black}{6} \\
  };\end{tikzpicture}
  \end{minipage}
\end{figure}
\begin{figure}[ht]
  \begin{minipage}[c][1\width]{0.44\textwidth}
  \centering
  \begin{tikzpicture}
  \matrix (m) [matrix of nodes,nodes={element},column sep=-\pgflinewidth, row sep=-\pgflinewidth, label={[font=\large]above: type U}]{
  |[draw,fill=white]| 0 &
  |[draw,fill=white]| 2 &
  |[draw,fill=white]| 3 &
  |[draw,fill=light]| \textcolor{black}{4} &
  |[draw,fill=light]| \textcolor{black}{5} &
  |[draw,fill=light]| \textcolor{black}{6} &
  |[draw,fill=white]| 1 &
  |[draw,fill=light]| \textcolor{black}{7} &
  |[draw,fill=light]| \textcolor{black}{8} &
  |[draw,fill=light]| \textcolor{black}{9} \\
  |[draw,fill=white]| 1 &
  |[draw,fill=light]| \textcolor{black}{4} &
  |[draw,fill=light]| \textcolor{black}{6} &
  |[draw,fill=white]| 0 &
  |[draw,fill=light]| \textcolor{black}{9} &
  |[draw,fill=white]| 2 &
  |[draw,fill=white]| 3 &
  |[draw,fill=light]| \textcolor{black}{8} &
  |[draw,fill=light]| \textcolor{black}{5} &
  |[draw,fill=light]| \textcolor{black}{7} \\
  |[draw,fill=white]| 2 &
  |[draw,fill=white]| 3 &
  |[draw,fill=light]| \textcolor{black}{5} &
  |[draw,fill=light]| \textcolor{black}{9} &
  |[draw,fill=white]| 0 &
  |[draw,fill=light]| \textcolor{black}{8} &
  |[draw,fill=light]| \textcolor{black}{7} &
  |[draw,fill=light]| \textcolor{black}{4} &
  |[draw,fill=light]| \textcolor{black}{6} &
  |[draw,fill=white]| 1 \\
  |[draw,fill=light]| \textcolor{black}{5} &
  |[draw,fill=light]| \textcolor{black}{6} &
  |[draw,fill=light]| \textcolor{black}{7} &
  |[draw,fill=white]| 1 &
  |[draw,fill=white]| 3 &
  |[draw,fill=white]| 0 &
  |[draw,fill=light]| \textcolor{black}{8} &
  |[draw,fill=white]| 2 &
  |[draw,fill=light]| \textcolor{black}{9} &
  |[draw,fill=light]| \textcolor{black}{4} \\
  |[draw,fill=light]| \textcolor{black}{6} &
  |[draw,fill=white]| 0 &
  |[draw,fill=white]| 1 &
  |[draw,fill=white]| 2 &
  |[draw,fill=light]| \textcolor{black}{8} &
  |[draw,fill=light]| \textcolor{black}{5} &
  |[draw,fill=light]| \textcolor{black}{4} &
  |[draw,fill=light]| \textcolor{black}{9} &
  |[draw,fill=light]| \textcolor{black}{7} &
  |[draw,fill=white]| 3 \\
  |[draw,fill=light]| \textcolor{black}{7} &
  |[draw,fill=light]| \textcolor{black}{5} &
  |[draw,fill=white]| 0 &
  |[draw,fill=white]| 3 &
  |[draw,fill=light]| \textcolor{black}{4} &
  |[draw,fill=white]| 1 &
  |[draw,fill=light]| \textcolor{black}{9} &
  |[draw,fill=light]| \textcolor{black}{6} &
  |[draw,fill=white]| 2 &
  |[draw,fill=light]| \textcolor{black}{8} \\
  |[draw,fill=dark]| \textcolor{white}{8} &
  |[draw,fill=light]| \textcolor{black}{7} &
  |[draw,fill=white]| 2 &
  |[draw,fill=dark]| \textcolor{white}{5} &
  |[draw,fill=white]| 1 &
  |[draw,fill=light]| \textcolor{black}{9} &
  |[draw,fill=light]| \textcolor{black}{6} &
  |[draw,fill=white]| 3 &
  |[draw,fill=light]| \textcolor{black}{4} &
  |[draw,fill=white]| 0 \\
  |[draw,fill=light]| \textcolor{black}{9} &
  |[draw,fill=white]| 1 &
  |[draw,fill=light]| \textcolor{black}{4} &
  |[draw,fill=dark]| \textcolor{white}{8} &
  |[draw,fill=white]| 2 &
  |[draw,fill=dark]| \textcolor{white}{7} &
  |[draw,fill=white]| 0 &
  |[draw,fill=light]| \textcolor{black}{5} &
  |[draw,fill=white]| 3 &
  |[draw,fill=light]| \textcolor{black}{6} \\
  |[draw,fill=white]| 3 &
  |[draw,fill=dark]| \textcolor{white}{8} &
  |[draw,fill=dark]| \textcolor{white}{9} &
  |[draw,fill=light]| \textcolor{black}{7} &
  |[draw,fill=dark]| \textcolor{white}{6} &
  |[draw,fill=dark]| \textcolor{white}{4} &
  |[draw,fill=white]| 2 &
  |[draw,fill=white]| 0 &
  |[draw,fill=white]| 1 &
  |[draw,fill=light]| \textcolor{black}{5} \\
  |[draw,fill=dark]| \textcolor{white}{4} &
  |[draw,fill=dark]| \textcolor{white}{9} &
  |[draw,fill=dark]| \textcolor{white}{8} &
  |[draw,fill=light]| \textcolor{black}{6} &
  |[draw,fill=dark]| \textcolor{white}{7} &
  |[draw,fill=white]| 3 &
  |[draw,fill=light]| \textcolor{black}{5} &
  |[draw,fill=white]| 1 &
  |[draw,fill=white]| 0 &
  |[draw,fill=white]| 2 \\
  };\end{tikzpicture}
  \end{minipage}
  \hfill
  \begin{minipage}[c][1\width]{0.44\textwidth}
  \centering
  \begin{tikzpicture}
  \matrix (m) [matrix of nodes,nodes={element},column sep=-\pgflinewidth, row sep=-\pgflinewidth, label={[font=\large]above: type W}]{
  |[draw,fill=white]| 0 &
  |[draw,fill=white]| 3 &
  |[draw,fill=light]| \textcolor{black}{7} &
  |[draw,fill=white]| 2 &
  |[draw,fill=light]| \textcolor{black}{4} &
  |[draw,fill=light]| \textcolor{black}{9} &
  |[draw,fill=light]| \textcolor{black}{5} &
  |[draw,fill=light]| \textcolor{black}{8} &
  |[draw,fill=white]| 1 &
  |[draw,fill=light]| \textcolor{black}{6} \\
  |[draw,fill=white]| 1 &
  |[draw,fill=light]| \textcolor{black}{5} &
  |[draw,fill=white]| 3 &
  |[draw,fill=light]| \textcolor{black}{7} &
  |[draw,fill=white]| 2 &
  |[draw,fill=light]| \textcolor{black}{8} &
  |[draw,fill=light]| \textcolor{black}{6} &
  |[draw,fill=light]| \textcolor{black}{9} &
  |[draw,fill=white]| 0 &
  |[draw,fill=light]| \textcolor{black}{4} \\
  |[draw,fill=white]| 3 &
  |[draw,fill=light]| \textcolor{black}{7} &
  |[draw,fill=white]| 1 &
  |[draw,fill=light]| \textcolor{black}{6} &
  |[draw,fill=light]| \textcolor{black}{5} &
  |[draw,fill=white]| 2 &
  |[draw,fill=white]| 0 &
  |[draw,fill=light]| \textcolor{black}{4} &
  |[draw,fill=light]| \textcolor{black}{9} &
  |[draw,fill=light]| \textcolor{black}{8} \\
  |[draw,fill=light]| \textcolor{black}{5} &
  |[draw,fill=white]| 0 &
  |[draw,fill=light]| \textcolor{black}{4} &
  |[draw,fill=light]| \textcolor{black}{9} &
  |[draw,fill=white]| 1 &
  |[draw,fill=white]| 3 &
  |[draw,fill=white]| 2 &
  |[draw,fill=light]| \textcolor{black}{6} &
  |[draw,fill=light]| \textcolor{black}{8} &
  |[draw,fill=light]| \textcolor{black}{7} \\
  |[draw,fill=light]| \textcolor{black}{9} &
  |[draw,fill=white]| 2 &
  |[draw,fill=light]| \textcolor{black}{6} &
  |[draw,fill=white]| 3 &
  |[draw,fill=light]| \textcolor{black}{8} &
  |[draw,fill=white]| 0 &
  |[draw,fill=light]| \textcolor{black}{7} &
  |[draw,fill=white]| 1 &
  |[draw,fill=light]| \textcolor{black}{4} &
  |[draw,fill=light]| \textcolor{black}{5} \\
  |[draw,fill=white]| 2 &
  |[draw,fill=light]| \textcolor{black}{4} &
  |[draw,fill=dark]| \textcolor{white}{8} &
  |[draw,fill=white]| 1 &
  |[draw,fill=dark]| \textcolor{white}{6} &
  |[draw,fill=light]| \textcolor{black}{5} &
  |[draw,fill=light]| \textcolor{black}{9} &
  |[draw,fill=light]| \textcolor{black}{7} &
  |[draw,fill=white]| 3 &
  |[draw,fill=white]| 0 \\
  |[draw,fill=light]| \textcolor{black}{6} &
  |[draw,fill=white]| 1 &
  |[draw,fill=light]| \textcolor{black}{5} &
  |[draw,fill=white]| 0 &
  |[draw,fill=dark]| \textcolor{white}{7} &
  |[draw,fill=dark]| \textcolor{white}{4} &
  |[draw,fill=light]| \textcolor{black}{8} &
  |[draw,fill=white]| 3 &
  |[draw,fill=white]| 2 &
  |[draw,fill=light]| \textcolor{black}{9} \\
  |[draw,fill=light]| \textcolor{black}{7} &
  |[draw,fill=dark]| \textcolor{white}{9} &
  |[draw,fill=white]| 2 &
  |[draw,fill=dark]| \textcolor{white}{8} &
  |[draw,fill=white]| 3 &
  |[draw,fill=light]| \textcolor{black}{6} &
  |[draw,fill=light]| \textcolor{black}{4} &
  |[draw,fill=white]| 0 &
  |[draw,fill=light]| \textcolor{black}{5} &
  |[draw,fill=white]| 1 \\
  |[draw,fill=dark]| \textcolor{white}{8} &
  |[draw,fill=light]| \textcolor{black}{6} &
  |[draw,fill=dark]| \textcolor{white}{9} &
  |[draw,fill=light]| \textcolor{black}{4} &
  |[draw,fill=white]| 0 &
  |[draw,fill=white]| 1 &
  |[draw,fill=white]| 3 &
  |[draw,fill=light]| \textcolor{black}{5} &
  |[draw,fill=light]| \textcolor{black}{7} &
  |[draw,fill=white]| 2 \\
  |[draw,fill=dark]| \textcolor{white}{4} &
  |[draw,fill=dark]| \textcolor{white}{8} &
  |[draw,fill=white]| 0 &
  |[draw,fill=dark]| \textcolor{white}{5} &
  |[draw,fill=light]| \textcolor{black}{9} &
  |[draw,fill=dark]| \textcolor{white}{7} &
  |[draw,fill=white]| 1 &
  |[draw,fill=white]| 2 &
  |[draw,fill=light]| \textcolor{black}{6} &
  |[draw,fill=white]| 3 \\
  };\end{tikzpicture}
  \end{minipage}
\end{figure}
\begin{figure}[ht]
  \begin{minipage}[c][1\width]{0.44\textwidth}
  \centering
  \begin{tikzpicture}
  \matrix (m) [matrix of nodes,nodes={element},column sep=-\pgflinewidth, row sep=-\pgflinewidth, label={[font=\large]above: type U}]{
  |[draw,fill=white]| 0 &
  |[draw,fill=white]| 1 &
  |[draw,fill=white]| 2 &
  |[draw,fill=light]| \textcolor{black}{4} &
  |[draw,fill=light]| \textcolor{black}{5} &
  |[draw,fill=light]| \textcolor{black}{6} &
  |[draw,fill=white]| 3 &
  |[draw,fill=light]| \textcolor{black}{7} &
  |[draw,fill=light]| \textcolor{black}{8} &
  |[draw,fill=light]| \textcolor{black}{9} \\
  |[draw,fill=white]| 1 &
  |[draw,fill=white]| 3 &
  |[draw,fill=light]| \textcolor{black}{8} &
  |[draw,fill=white]| 2 &
  |[draw,fill=light]| \textcolor{black}{6} &
  |[draw,fill=light]| \textcolor{black}{4} &
  |[draw,fill=light]| \textcolor{black}{7} &
  |[draw,fill=light]| \textcolor{black}{5} &
  |[draw,fill=light]| \textcolor{black}{9} &
  |[draw,fill=white]| 0 \\
  |[draw,fill=white]| 2 &
  |[draw,fill=light]| \textcolor{black}{8} &
  |[draw,fill=light]| \textcolor{black}{9} &
  |[draw,fill=light]| \textcolor{black}{5} &
  |[draw,fill=white]| 3 &
  |[draw,fill=white]| 1 &
  |[draw,fill=light]| \textcolor{black}{4} &
  |[draw,fill=white]| 0 &
  |[draw,fill=light]| \textcolor{black}{7} &
  |[draw,fill=light]| \textcolor{black}{6} \\
  |[draw,fill=light]| \textcolor{black}{5} &
  |[draw,fill=light]| \textcolor{black}{9} &
  |[draw,fill=white]| 0 &
  |[draw,fill=white]| 1 &
  |[draw,fill=white]| 2 &
  |[draw,fill=light]| \textcolor{black}{7} &
  |[draw,fill=light]| \textcolor{black}{8} &
  |[draw,fill=light]| \textcolor{black}{6} &
  |[draw,fill=white]| 3 &
  |[draw,fill=light]| \textcolor{black}{4} \\
  |[draw,fill=light]| \textcolor{black}{7} &
  |[draw,fill=white]| 0 &
  |[draw,fill=light]| \textcolor{black}{4} &
  |[draw,fill=white]| 3 &
  |[draw,fill=white]| 1 &
  |[draw,fill=light]| \textcolor{black}{9} &
  |[draw,fill=white]| 2 &
  |[draw,fill=light]| \textcolor{black}{8} &
  |[draw,fill=light]| \textcolor{black}{6} &
  |[draw,fill=light]| \textcolor{black}{5} \\
  |[draw,fill=light]| \textcolor{black}{9} &
  |[draw,fill=light]| \textcolor{black}{5} &
  |[draw,fill=white]| 3 &
  |[draw,fill=light]| \textcolor{black}{7} &
  |[draw,fill=white]| 0 &
  |[draw,fill=white]| 2 &
  |[draw,fill=light]| \textcolor{black}{6} &
  |[draw,fill=white]| 1 &
  |[draw,fill=light]| \textcolor{black}{4} &
  |[draw,fill=light]| \textcolor{black}{8} \\
  |[draw,fill=white]| 3 &
  |[draw,fill=light]| \textcolor{black}{4} &
  |[draw,fill=dark]| \textcolor{white}{7} &
  |[draw,fill=dark]| \textcolor{white}{6} &
  |[draw,fill=light]| \textcolor{black}{8} &
  |[draw,fill=white]| 0 &
  |[draw,fill=light]| \textcolor{black}{5} &
  |[draw,fill=light]| \textcolor{black}{9} &
  |[draw,fill=white]| 2 &
  |[draw,fill=white]| 1 \\
  |[draw,fill=light]| \textcolor{black}{6} &
  |[draw,fill=dark]| \textcolor{white}{7} &
  |[draw,fill=white]| 1 &
  |[draw,fill=white]| 0 &
  |[draw,fill=light]| \textcolor{black}{4} &
  |[draw,fill=dark]| \textcolor{white}{8} &
  |[draw,fill=light]| \textcolor{black}{9} &
  |[draw,fill=white]| 3 &
  |[draw,fill=light]| \textcolor{black}{5} &
  |[draw,fill=white]| 2 \\
  |[draw,fill=dark]| \textcolor{white}{4} &
  |[draw,fill=dark]| \textcolor{white}{6} &
  |[draw,fill=dark]| \textcolor{white}{5} &
  |[draw,fill=light]| \textcolor{black}{8} &
  |[draw,fill=dark]| \textcolor{white}{9} &
  |[draw,fill=white]| 3 &
  |[draw,fill=white]| 0 &
  |[draw,fill=white]| 2 &
  |[draw,fill=white]| 1 &
  |[draw,fill=light]| \textcolor{black}{7} \\
  |[draw,fill=dark]| \textcolor{white}{8} &
  |[draw,fill=white]| 2 &
  |[draw,fill=light]| \textcolor{black}{6} &
  |[draw,fill=dark]| \textcolor{white}{9} &
  |[draw,fill=dark]| \textcolor{white}{7} &
  |[draw,fill=dark]| \textcolor{white}{5} &
  |[draw,fill=white]| 1 &
  |[draw,fill=light]| \textcolor{black}{4} &
  |[draw,fill=white]| 0 &
  |[draw,fill=white]| 3 \\
  };\end{tikzpicture}
  \end{minipage}
  \hfill
  \begin{minipage}[c][1\width]{0.44\textwidth}
  \centering
  \begin{tikzpicture}
  \matrix (m) [matrix of nodes,nodes={element},column sep=-\pgflinewidth, row sep=-\pgflinewidth, label={[font=\large]above: type X}]{
  |[draw,fill=white]| 0 &
  |[draw,fill=light]| \textcolor{black}{5} &
  |[draw,fill=light]| \textcolor{black}{9} &
  |[draw,fill=white]| 1 &
  |[draw,fill=light]| \textcolor{black}{8} &
  |[draw,fill=white]| 3 &
  |[draw,fill=light]| \textcolor{black}{7} &
  |[draw,fill=light]| \textcolor{black}{4} &
  |[draw,fill=light]| \textcolor{black}{6} &
  |[draw,fill=white]| 2 \\
  |[draw,fill=white]| 3 &
  |[draw,fill=white]| 2 &
  |[draw,fill=white]| 1 &
  |[draw,fill=light]| \textcolor{black}{5} &
  |[draw,fill=light]| \textcolor{black}{6} &
  |[draw,fill=light]| \textcolor{black}{7} &
  |[draw,fill=white]| 0 &
  |[draw,fill=light]| \textcolor{black}{8} &
  |[draw,fill=light]| \textcolor{black}{4} &
  |[draw,fill=light]| \textcolor{black}{9} \\
  |[draw,fill=light]| \textcolor{black}{5} &
  |[draw,fill=white]| 3 &
  |[draw,fill=light]| \textcolor{black}{6} &
  |[draw,fill=light]| \textcolor{black}{4} &
  |[draw,fill=white]| 1 &
  |[draw,fill=white]| 0 &
  |[draw,fill=light]| \textcolor{black}{9} &
  |[draw,fill=white]| 2 &
  |[draw,fill=light]| \textcolor{black}{7} &
  |[draw,fill=light]| \textcolor{black}{8} \\
  |[draw,fill=light]| \textcolor{black}{6} &
  |[draw,fill=light]| \textcolor{black}{4} &
  |[draw,fill=white]| 0 &
  |[draw,fill=light]| \textcolor{black}{8} &
  |[draw,fill=white]| 3 &
  |[draw,fill=white]| 2 &
  |[draw,fill=white]| 1 &
  |[draw,fill=light]| \textcolor{black}{7} &
  |[draw,fill=light]| \textcolor{black}{9} &
  |[draw,fill=light]| \textcolor{black}{5} \\
  |[draw,fill=white]| 1 &
  |[draw,fill=light]| \textcolor{black}{9} &
  |[draw,fill=white]| 3 &
  |[draw,fill=dark]| \textcolor{white}{6} &
  |[draw,fill=light]| \textcolor{black}{4} &
  |[draw,fill=dark]| \textcolor{white}{5} &
  |[draw,fill=white]| 2 &
  |[draw,fill=white]| 0 &
  |[draw,fill=light]| \textcolor{black}{8} &
  |[draw,fill=light]| \textcolor{black}{7} \\
  |[draw,fill=white]| 2 &
  |[draw,fill=dark]| \textcolor{white}{7} &
  |[draw,fill=light]| \textcolor{black}{8} &
  |[draw,fill=white]| 3 &
  |[draw,fill=dark]| \textcolor{white}{9} &
  |[draw,fill=light]| \textcolor{black}{6} &
  |[draw,fill=light]| \textcolor{black}{5} &
  |[draw,fill=white]| 1 &
  |[draw,fill=white]| 0 &
  |[draw,fill=light]| \textcolor{black}{4} \\
  |[draw,fill=dark]| \textcolor{white}{4} &
  |[draw,fill=light]| \textcolor{black}{8} &
  |[draw,fill=white]| 2 &
  |[draw,fill=white]| 0 &
  |[draw,fill=dark]| \textcolor{white}{7} &
  |[draw,fill=light]| \textcolor{black}{9} &
  |[draw,fill=light]| \textcolor{black}{6} &
  |[draw,fill=light]| \textcolor{black}{5} &
  |[draw,fill=white]| 3 &
  |[draw,fill=white]| 1 \\
  |[draw,fill=light]| \textcolor{black}{7} &
  |[draw,fill=white]| 1 &
  |[draw,fill=dark]| \textcolor{white}{5} &
  |[draw,fill=dark]| \textcolor{white}{9} &
  |[draw,fill=white]| 0 &
  |[draw,fill=light]| \textcolor{black}{4} &
  |[draw,fill=light]| \textcolor{black}{8} &
  |[draw,fill=white]| 3 &
  |[draw,fill=white]| 2 &
  |[draw,fill=light]| \textcolor{black}{6} \\
  |[draw,fill=dark]| \textcolor{white}{8} &
  |[draw,fill=dark]| \textcolor{white}{6} &
  |[draw,fill=light]| \textcolor{black}{4} &
  |[draw,fill=light]| \textcolor{black}{7} &
  |[draw,fill=white]| 2 &
  |[draw,fill=white]| 1 &
  |[draw,fill=white]| 3 &
  |[draw,fill=light]| \textcolor{black}{9} &
  |[draw,fill=light]| \textcolor{black}{5} &
  |[draw,fill=white]| 0 \\
  |[draw,fill=light]| \textcolor{black}{9} &
  |[draw,fill=white]| 0 &
  |[draw,fill=dark]| \textcolor{white}{7} &
  |[draw,fill=white]| 2 &
  |[draw,fill=light]| \textcolor{black}{5} &
  |[draw,fill=dark]| \textcolor{white}{8} &
  |[draw,fill=light]| \textcolor{black}{4} &
  |[draw,fill=light]| \textcolor{black}{6} &
  |[draw,fill=white]| 1 &
  |[draw,fill=white]| 3 \\
  };\end{tikzpicture}
  \end{minipage}
\end{figure}
\begin{figure}[ht]
  \begin{minipage}[c][1\width]{0.44\textwidth}
  \centering
  \begin{tikzpicture}
  \matrix (m) [matrix of nodes,nodes={element},column sep=-\pgflinewidth, row sep=-\pgflinewidth, label={[font=\large]above: type V}]{
  |[draw,fill=white]| 0 &
  |[draw,fill=white]| 2 &
  |[draw,fill=white]| 3 &
  |[draw,fill=light]| \textcolor{black}{4} &
  |[draw,fill=light]| \textcolor{black}{5} &
  |[draw,fill=light]| \textcolor{black}{6} &
  |[draw,fill=white]| 1 &
  |[draw,fill=light]| \textcolor{black}{7} &
  |[draw,fill=light]| \textcolor{black}{8} &
  |[draw,fill=light]| \textcolor{black}{9} \\
  |[draw,fill=white]| 1 &
  |[draw,fill=white]| 3 &
  |[draw,fill=light]| \textcolor{black}{7} &
  |[draw,fill=light]| \textcolor{black}{5} &
  |[draw,fill=white]| 0 &
  |[draw,fill=light]| \textcolor{black}{9} &
  |[draw,fill=light]| \textcolor{black}{4} &
  |[draw,fill=light]| \textcolor{black}{8} &
  |[draw,fill=white]| 2 &
  |[draw,fill=light]| \textcolor{black}{6} \\
  |[draw,fill=white]| 3 &
  |[draw,fill=white]| 0 &
  |[draw,fill=white]| 2 &
  |[draw,fill=light]| \textcolor{black}{9} &
  |[draw,fill=light]| \textcolor{black}{6} &
  |[draw,fill=light]| \textcolor{black}{8} &
  |[draw,fill=light]| \textcolor{black}{5} &
  |[draw,fill=white]| 1 &
  |[draw,fill=light]| \textcolor{black}{4} &
  |[draw,fill=light]| \textcolor{black}{7} \\
  |[draw,fill=light]| \textcolor{black}{4} &
  |[draw,fill=light]| \textcolor{black}{6} &
  |[draw,fill=white]| 1 &
  |[draw,fill=light]| \textcolor{black}{8} &
  |[draw,fill=white]| 2 &
  |[draw,fill=white]| 0 &
  |[draw,fill=light]| \textcolor{black}{7} &
  |[draw,fill=light]| \textcolor{black}{9} &
  |[draw,fill=light]| \textcolor{black}{5} &
  |[draw,fill=white]| 3 \\
  |[draw,fill=light]| \textcolor{black}{5} &
  |[draw,fill=white]| 1 &
  |[draw,fill=white]| 0 &
  |[draw,fill=white]| 3 &
  |[draw,fill=light]| \textcolor{black}{9} &
  |[draw,fill=light]| \textcolor{black}{7} &
  |[draw,fill=white]| 2 &
  |[draw,fill=light]| \textcolor{black}{4} &
  |[draw,fill=light]| \textcolor{black}{6} &
  |[draw,fill=light]| \textcolor{black}{8} \\
  |[draw,fill=light]| \textcolor{black}{7} &
  |[draw,fill=light]| \textcolor{black}{5} &
  |[draw,fill=light]| \textcolor{black}{4} &
  |[draw,fill=white]| 0 &
  |[draw,fill=white]| 1 &
  |[draw,fill=white]| 3 &
  |[draw,fill=light]| \textcolor{black}{8} &
  |[draw,fill=light]| \textcolor{black}{6} &
  |[draw,fill=light]| \textcolor{black}{9} &
  |[draw,fill=white]| 2 \\
  |[draw,fill=white]| 2 &
  |[draw,fill=light]| \textcolor{black}{7} &
  |[draw,fill=light]| \textcolor{black}{8} &
  |[draw,fill=dark]| \textcolor{white}{6} &
  |[draw,fill=white]| 3 &
  |[draw,fill=dark]| \textcolor{white}{4} &
  |[draw,fill=light]| \textcolor{black}{9} &
  |[draw,fill=white]| 0 &
  |[draw,fill=white]| 1 &
  |[draw,fill=light]| \textcolor{black}{5} \\
  |[draw,fill=light]| \textcolor{black}{6} &
  |[draw,fill=light]| \textcolor{black}{8} &
  |[draw,fill=dark]| \textcolor{white}{9} &
  |[draw,fill=white]| 2 &
  |[draw,fill=dark]| \textcolor{white}{7} &
  |[draw,fill=white]| 1 &
  |[draw,fill=white]| 3 &
  |[draw,fill=light]| \textcolor{black}{5} &
  |[draw,fill=white]| 0 &
  |[draw,fill=light]| \textcolor{black}{4} \\
  |[draw,fill=dark]| \textcolor{white}{9} &
  |[draw,fill=dark]| \textcolor{white}{4} &
  |[draw,fill=light]| \textcolor{black}{5} &
  |[draw,fill=white]| 1 &
  |[draw,fill=light]| \textcolor{black}{8} &
  |[draw,fill=white]| 2 &
  |[draw,fill=light]| \textcolor{black}{6} &
  |[draw,fill=white]| 3 &
  |[draw,fill=light]| \textcolor{black}{7} &
  |[draw,fill=white]| 0 \\
  |[draw,fill=dark]| \textcolor{white}{8} &
  |[draw,fill=dark]| \textcolor{white}{9} &
  |[draw,fill=dark]| \textcolor{white}{6} &
  |[draw,fill=dark]| \textcolor{white}{7} &
  |[draw,fill=dark]| \textcolor{white}{4} &
  |[draw,fill=dark]| \textcolor{white}{5} &
  |[draw,fill=white]| 0 &
  |[draw,fill=white]| 2 &
  |[draw,fill=white]| 3 &
  |[draw,fill=white]| 1 \\
  };\end{tikzpicture}
  \end{minipage}
  \hfill
  \begin{minipage}[c][1\width]{0.44\textwidth}
  \centering
  \begin{tikzpicture}
  \matrix (m) [matrix of nodes,nodes={element},column sep=-\pgflinewidth, row sep=-\pgflinewidth, label={[font=\large]above: type X}]{
  |[draw,fill=white]| 0 &
  |[draw,fill=white]| 3 &
  |[draw,fill=light]| \textcolor{black}{4} &
  |[draw,fill=light]| \textcolor{black}{9} &
  |[draw,fill=light]| \textcolor{black}{8} &
  |[draw,fill=white]| 1 &
  |[draw,fill=light]| \textcolor{black}{7} &
  |[draw,fill=white]| 2 &
  |[draw,fill=light]| \textcolor{black}{6} &
  |[draw,fill=light]| \textcolor{black}{5} \\
  |[draw,fill=white]| 1 &
  |[draw,fill=light]| \textcolor{black}{5} &
  |[draw,fill=white]| 0 &
  |[draw,fill=light]| \textcolor{black}{8} &
  |[draw,fill=light]| \textcolor{black}{6} &
  |[draw,fill=white]| 2 &
  |[draw,fill=light]| \textcolor{black}{9} &
  |[draw,fill=light]| \textcolor{black}{7} &
  |[draw,fill=white]| 3 &
  |[draw,fill=light]| \textcolor{black}{4} \\
  |[draw,fill=white]| 2 &
  |[draw,fill=light]| \textcolor{black}{6} &
  |[draw,fill=white]| 3 &
  |[draw,fill=white]| 0 &
  |[draw,fill=light]| \textcolor{black}{9} &
  |[draw,fill=light]| \textcolor{black}{8} &
  |[draw,fill=light]| \textcolor{black}{4} &
  |[draw,fill=light]| \textcolor{black}{5} &
  |[draw,fill=light]| \textcolor{black}{7} &
  |[draw,fill=white]| 1 \\
  |[draw,fill=light]| \textcolor{black}{4} &
  |[draw,fill=white]| 1 &
  |[draw,fill=light]| \textcolor{black}{5} &
  |[draw,fill=white]| 2 &
  |[draw,fill=white]| 3 &
  |[draw,fill=light]| \textcolor{black}{9} &
  |[draw,fill=white]| 0 &
  |[draw,fill=light]| \textcolor{black}{6} &
  |[draw,fill=light]| \textcolor{black}{8} &
  |[draw,fill=light]| \textcolor{black}{7} \\
  |[draw,fill=white]| 3 &
  |[draw,fill=light]| \textcolor{black}{8} &
  |[draw,fill=light]| \textcolor{black}{7} &
  |[draw,fill=dark]| \textcolor{white}{6} &
  |[draw,fill=white]| 2 &
  |[draw,fill=dark]| \textcolor{white}{5} &
  |[draw,fill=white]| 1 &
  |[draw,fill=light]| \textcolor{black}{4} &
  |[draw,fill=light]| \textcolor{black}{9} &
  |[draw,fill=white]| 0 \\
  |[draw,fill=light]| \textcolor{black}{5} &
  |[draw,fill=dark]| \textcolor{white}{9} &
  |[draw,fill=white]| 2 &
  |[draw,fill=light]| \textcolor{black}{4} &
  |[draw,fill=dark]| \textcolor{white}{7} &
  |[draw,fill=white]| 0 &
  |[draw,fill=light]| \textcolor{black}{8} &
  |[draw,fill=white]| 3 &
  |[draw,fill=white]| 1 &
  |[draw,fill=light]| \textcolor{black}{6} \\
  |[draw,fill=light]| \textcolor{black}{6} &
  |[draw,fill=dark]| \textcolor{white}{4} &
  |[draw,fill=light]| \textcolor{black}{8} &
  |[draw,fill=dark]| \textcolor{white}{7} &
  |[draw,fill=white]| 0 &
  |[draw,fill=white]| 3 &
  |[draw,fill=white]| 2 &
  |[draw,fill=white]| 1 &
  |[draw,fill=light]| \textcolor{black}{5} &
  |[draw,fill=light]| \textcolor{black}{9} \\
  |[draw,fill=light]| \textcolor{black}{7} &
  |[draw,fill=white]| 0 &
  |[draw,fill=dark]| \textcolor{white}{6} &
  |[draw,fill=white]| 1 &
  |[draw,fill=light]| \textcolor{black}{5} &
  |[draw,fill=dark]| \textcolor{white}{4} &
  |[draw,fill=white]| 3 &
  |[draw,fill=light]| \textcolor{black}{9} &
  |[draw,fill=white]| 2 &
  |[draw,fill=light]| \textcolor{black}{8} \\
  |[draw,fill=dark]| \textcolor{white}{8} &
  |[draw,fill=white]| 2 &
  |[draw,fill=dark]| \textcolor{white}{9} &
  |[draw,fill=light]| \textcolor{black}{5} &
  |[draw,fill=white]| 1 &
  |[draw,fill=light]| \textcolor{black}{7} &
  |[draw,fill=light]| \textcolor{black}{6} &
  |[draw,fill=white]| 0 &
  |[draw,fill=light]| \textcolor{black}{4} &
  |[draw,fill=white]| 3 \\
  |[draw,fill=dark]| \textcolor{white}{9} &
  |[draw,fill=light]| \textcolor{black}{7} &
  |[draw,fill=white]| 1 &
  |[draw,fill=white]| 3 &
  |[draw,fill=dark]| \textcolor{white}{4} &
  |[draw,fill=light]| \textcolor{black}{6} &
  |[draw,fill=light]| \textcolor{black}{5} &
  |[draw,fill=light]| \textcolor{black}{8} &
  |[draw,fill=white]| 0 &
  |[draw,fill=white]| 2 \\
  };\end{tikzpicture}
  \end{minipage}
\end{figure}
\begin{figure}[ht]
  \begin{minipage}[c][1\width]{0.44\textwidth}
  \centering
  \begin{tikzpicture}
  \matrix (m) [matrix of nodes,nodes={element},column sep=-\pgflinewidth, row sep=-\pgflinewidth, label={[font=\large]above: type W}]{
  |[draw,fill=white]| 0 &
  |[draw,fill=white]| 2 &
  |[draw,fill=white]| 3 &
  |[draw,fill=light]| \textcolor{black}{4} &
  |[draw,fill=light]| \textcolor{black}{5} &
  |[draw,fill=light]| \textcolor{black}{6} &
  |[draw,fill=white]| 1 &
  |[draw,fill=light]| \textcolor{black}{7} &
  |[draw,fill=light]| \textcolor{black}{8} &
  |[draw,fill=light]| \textcolor{black}{9} \\
  |[draw,fill=white]| 1 &
  |[draw,fill=white]| 3 &
  |[draw,fill=light]| \textcolor{black}{7} &
  |[draw,fill=light]| \textcolor{black}{6} &
  |[draw,fill=white]| 2 &
  |[draw,fill=light]| \textcolor{black}{9} &
  |[draw,fill=light]| \textcolor{black}{4} &
  |[draw,fill=white]| 0 &
  |[draw,fill=light]| \textcolor{black}{5} &
  |[draw,fill=light]| \textcolor{black}{8} \\
  |[draw,fill=white]| 2 &
  |[draw,fill=light]| \textcolor{black}{7} &
  |[draw,fill=white]| 1 &
  |[draw,fill=light]| \textcolor{black}{5} &
  |[draw,fill=white]| 0 &
  |[draw,fill=light]| \textcolor{black}{4} &
  |[draw,fill=light]| \textcolor{black}{8} &
  |[draw,fill=light]| \textcolor{black}{9} &
  |[draw,fill=white]| 3 &
  |[draw,fill=light]| \textcolor{black}{6} \\
  |[draw,fill=white]| 3 &
  |[draw,fill=light]| \textcolor{black}{5} &
  |[draw,fill=light]| \textcolor{black}{4} &
  |[draw,fill=white]| 0 &
  |[draw,fill=light]| \textcolor{black}{8} &
  |[draw,fill=white]| 1 &
  |[draw,fill=light]| \textcolor{black}{7} &
  |[draw,fill=light]| \textcolor{black}{6} &
  |[draw,fill=light]| \textcolor{black}{9} &
  |[draw,fill=white]| 2 \\
  |[draw,fill=light]| \textcolor{black}{8} &
  |[draw,fill=white]| 0 &
  |[draw,fill=white]| 2 &
  |[draw,fill=white]| 3 &
  |[draw,fill=light]| \textcolor{black}{9} &
  |[draw,fill=light]| \textcolor{black}{7} &
  |[draw,fill=light]| \textcolor{black}{5} &
  |[draw,fill=white]| 1 &
  |[draw,fill=light]| \textcolor{black}{6} &
  |[draw,fill=light]| \textcolor{black}{4} \\
  |[draw,fill=dark]| \textcolor{white}{4} &
  |[draw,fill=white]| 1 &
  |[draw,fill=light]| \textcolor{black}{6} &
  |[draw,fill=light]| \textcolor{black}{8} &
  |[draw,fill=white]| 3 &
  |[draw,fill=dark]| \textcolor{white}{5} &
  |[draw,fill=light]| \textcolor{black}{9} &
  |[draw,fill=white]| 2 &
  |[draw,fill=white]| 0 &
  |[draw,fill=light]| \textcolor{black}{7} \\
  |[draw,fill=light]| \textcolor{black}{5} &
  |[draw,fill=light]| \textcolor{black}{8} &
  |[draw,fill=dark]| \textcolor{white}{9} &
  |[draw,fill=dark]| \textcolor{white}{7} &
  |[draw,fill=white]| 1 &
  |[draw,fill=white]| 2 &
  |[draw,fill=light]| \textcolor{black}{6} &
  |[draw,fill=white]| 3 &
  |[draw,fill=light]| \textcolor{black}{4} &
  |[draw,fill=white]| 0 \\
  |[draw,fill=light]| \textcolor{black}{6} &
  |[draw,fill=light]| \textcolor{black}{9} &
  |[draw,fill=white]| 0 &
  |[draw,fill=white]| 1 &
  |[draw,fill=dark]| \textcolor{white}{7} &
  |[draw,fill=dark]| \textcolor{white}{8} &
  |[draw,fill=white]| 3 &
  |[draw,fill=light]| \textcolor{black}{4} &
  |[draw,fill=white]| 2 &
  |[draw,fill=light]| \textcolor{black}{5} \\
  |[draw,fill=light]| \textcolor{black}{9} &
  |[draw,fill=dark]| \textcolor{white}{4} &
  |[draw,fill=light]| \textcolor{black}{5} &
  |[draw,fill=white]| 2 &
  |[draw,fill=dark]| \textcolor{white}{6} &
  |[draw,fill=white]| 3 &
  |[draw,fill=white]| 0 &
  |[draw,fill=light]| \textcolor{black}{8} &
  |[draw,fill=light]| \textcolor{black}{7} &
  |[draw,fill=white]| 1 \\
  |[draw,fill=dark]| \textcolor{white}{7} &
  |[draw,fill=dark]| \textcolor{white}{6} &
  |[draw,fill=dark]| \textcolor{white}{8} &
  |[draw,fill=dark]| \textcolor{white}{9} &
  |[draw,fill=light]| \textcolor{black}{4} &
  |[draw,fill=white]| 0 &
  |[draw,fill=white]| 2 &
  |[draw,fill=light]| \textcolor{black}{5} &
  |[draw,fill=white]| 1 &
  |[draw,fill=white]| 3 \\
  };\end{tikzpicture}
  \end{minipage}
  \hfill
  \begin{minipage}[c][1\width]{0.44\textwidth}
  \centering
  \begin{tikzpicture}
  \matrix (m) [matrix of nodes,nodes={element},column sep=-\pgflinewidth, row sep=-\pgflinewidth, label={[font=\large]above: type W}]{
  |[draw,fill=white]| 0 &
  |[draw,fill=white]| 3 &
  |[draw,fill=light]| \textcolor{black}{5} &
  |[draw,fill=light]| \textcolor{black}{8} &
  |[draw,fill=white]| 1 &
  |[draw,fill=light]| \textcolor{black}{7} &
  |[draw,fill=white]| 2 &
  |[draw,fill=light]| \textcolor{black}{4} &
  |[draw,fill=light]| \textcolor{black}{9} &
  |[draw,fill=light]| \textcolor{black}{6} \\
  |[draw,fill=white]| 2 &
  |[draw,fill=white]| 0 &
  |[draw,fill=light]| \textcolor{black}{6} &
  |[draw,fill=white]| 1 &
  |[draw,fill=light]| \textcolor{black}{5} &
  |[draw,fill=light]| \textcolor{black}{9} &
  |[draw,fill=light]| \textcolor{black}{7} &
  |[draw,fill=light]| \textcolor{black}{8} &
  |[draw,fill=light]| \textcolor{black}{4} &
  |[draw,fill=white]| 3 \\
  |[draw,fill=light]| \textcolor{black}{5} &
  |[draw,fill=light]| \textcolor{black}{9} &
  |[draw,fill=white]| 3 &
  |[draw,fill=white]| 2 &
  |[draw,fill=light]| \textcolor{black}{4} &
  |[draw,fill=white]| 1 &
  |[draw,fill=light]| \textcolor{black}{8} &
  |[draw,fill=white]| 0 &
  |[draw,fill=light]| \textcolor{black}{6} &
  |[draw,fill=light]| \textcolor{black}{7} \\
  |[draw,fill=light]| \textcolor{black}{6} &
  |[draw,fill=white]| 1 &
  |[draw,fill=white]| 2 &
  |[draw,fill=light]| \textcolor{black}{5} &
  |[draw,fill=light]| \textcolor{black}{8} &
  |[draw,fill=white]| 0 &
  |[draw,fill=light]| \textcolor{black}{4} &
  |[draw,fill=white]| 3 &
  |[draw,fill=light]| \textcolor{black}{7} &
  |[draw,fill=light]| \textcolor{black}{9} \\
  |[draw,fill=light]| \textcolor{black}{9} &
  |[draw,fill=light]| \textcolor{black}{7} &
  |[draw,fill=white]| 0 &
  |[draw,fill=light]| \textcolor{black}{4} &
  |[draw,fill=white]| 3 &
  |[draw,fill=white]| 2 &
  |[draw,fill=light]| \textcolor{black}{5} &
  |[draw,fill=light]| \textcolor{black}{6} &
  |[draw,fill=white]| 1 &
  |[draw,fill=light]| \textcolor{black}{8} \\
  |[draw,fill=white]| 1 &
  |[draw,fill=light]| \textcolor{black}{5} &
  |[draw,fill=dark]| \textcolor{white}{8} &
  |[draw,fill=white]| 3 &
  |[draw,fill=dark]| \textcolor{white}{6} &
  |[draw,fill=light]| \textcolor{black}{4} &
  |[draw,fill=light]| \textcolor{black}{9} &
  |[draw,fill=light]| \textcolor{black}{7} &
  |[draw,fill=white]| 2 &
  |[draw,fill=white]| 0 \\
  |[draw,fill=white]| 3 &
  |[draw,fill=light]| \textcolor{black}{8} &
  |[draw,fill=white]| 1 &
  |[draw,fill=dark]| \textcolor{white}{9} &
  |[draw,fill=dark]| \textcolor{white}{7} &
  |[draw,fill=light]| \textcolor{black}{6} &
  |[draw,fill=white]| 0 &
  |[draw,fill=white]| 2 &
  |[draw,fill=light]| \textcolor{black}{5} &
  |[draw,fill=light]| \textcolor{black}{4} \\
  |[draw,fill=dark]| \textcolor{white}{4} &
  |[draw,fill=white]| 2 &
  |[draw,fill=light]| \textcolor{black}{7} &
  |[draw,fill=white]| 0 &
  |[draw,fill=light]| \textcolor{black}{9} &
  |[draw,fill=dark]| \textcolor{white}{8} &
  |[draw,fill=light]| \textcolor{black}{6} &
  |[draw,fill=light]| \textcolor{black}{5} &
  |[draw,fill=white]| 3 &
  |[draw,fill=white]| 1 \\
  |[draw,fill=light]| \textcolor{black}{8} &
  |[draw,fill=dark]| \textcolor{white}{6} &
  |[draw,fill=light]| \textcolor{black}{4} &
  |[draw,fill=dark]| \textcolor{white}{7} &
  |[draw,fill=white]| 2 &
  |[draw,fill=white]| 3 &
  |[draw,fill=white]| 1 &
  |[draw,fill=light]| \textcolor{black}{9} &
  |[draw,fill=white]| 0 &
  |[draw,fill=light]| \textcolor{black}{5} \\
  |[draw,fill=dark]| \textcolor{white}{7} &
  |[draw,fill=dark]| \textcolor{white}{4} &
  |[draw,fill=dark]| \textcolor{white}{9} &
  |[draw,fill=light]| \textcolor{black}{6} &
  |[draw,fill=white]| 0 &
  |[draw,fill=dark]| \textcolor{white}{5} &
  |[draw,fill=white]| 3 &
  |[draw,fill=white]| 1 &
  |[draw,fill=light]| \textcolor{black}{8} &
  |[draw,fill=white]| 2 \\
  };\end{tikzpicture}
  \end{minipage}
\end{figure}
\begin{figure}[ht]
  \begin{minipage}[c][1\width]{0.44\textwidth}
  \centering
  \begin{tikzpicture}
  \matrix (m) [matrix of nodes,nodes={element},column sep=-\pgflinewidth, row sep=-\pgflinewidth, label={[font=\large]above: type W}]{
  |[draw,fill=white]| 0 &
  |[draw,fill=white]| 1 &
  |[draw,fill=white]| 3 &
  |[draw,fill=light]| \textcolor{black}{4} &
  |[draw,fill=light]| \textcolor{black}{5} &
  |[draw,fill=light]| \textcolor{black}{6} &
  |[draw,fill=white]| 2 &
  |[draw,fill=light]| \textcolor{black}{7} &
  |[draw,fill=light]| \textcolor{black}{8} &
  |[draw,fill=light]| \textcolor{black}{9} \\
  |[draw,fill=white]| 1 &
  |[draw,fill=light]| \textcolor{black}{5} &
  |[draw,fill=white]| 0 &
  |[draw,fill=white]| 2 &
  |[draw,fill=light]| \textcolor{black}{7} &
  |[draw,fill=light]| \textcolor{black}{9} &
  |[draw,fill=light]| \textcolor{black}{8} &
  |[draw,fill=light]| \textcolor{black}{6} &
  |[draw,fill=light]| \textcolor{black}{4} &
  |[draw,fill=white]| 3 \\
  |[draw,fill=white]| 2 &
  |[draw,fill=light]| \textcolor{black}{6} &
  |[draw,fill=light]| \textcolor{black}{9} &
  |[draw,fill=white]| 0 &
  |[draw,fill=light]| \textcolor{black}{8} &
  |[draw,fill=white]| 1 &
  |[draw,fill=white]| 3 &
  |[draw,fill=light]| \textcolor{black}{4} &
  |[draw,fill=light]| \textcolor{black}{7} &
  |[draw,fill=light]| \textcolor{black}{5} \\
  |[draw,fill=white]| 3 &
  |[draw,fill=white]| 2 &
  |[draw,fill=light]| \textcolor{black}{5} &
  |[draw,fill=light]| \textcolor{black}{7} &
  |[draw,fill=white]| 0 &
  |[draw,fill=light]| \textcolor{black}{4} &
  |[draw,fill=white]| 1 &
  |[draw,fill=light]| \textcolor{black}{8} &
  |[draw,fill=light]| \textcolor{black}{9} &
  |[draw,fill=light]| \textcolor{black}{6} \\
  |[draw,fill=light]| \textcolor{black}{7} &
  |[draw,fill=light]| \textcolor{black}{9} &
  |[draw,fill=white]| 1 &
  |[draw,fill=light]| \textcolor{black}{8} &
  |[draw,fill=white]| 2 &
  |[draw,fill=white]| 3 &
  |[draw,fill=light]| \textcolor{black}{5} &
  |[draw,fill=white]| 0 &
  |[draw,fill=light]| \textcolor{black}{6} &
  |[draw,fill=light]| \textcolor{black}{4} \\
  |[draw,fill=dark]| \textcolor{white}{4} &
  |[draw,fill=white]| 0 &
  |[draw,fill=light]| \textcolor{black}{7} &
  |[draw,fill=light]| \textcolor{black}{5} &
  |[draw,fill=dark]| \textcolor{white}{6} &
  |[draw,fill=white]| 2 &
  |[draw,fill=light]| \textcolor{black}{9} &
  |[draw,fill=white]| 3 &
  |[draw,fill=white]| 1 &
  |[draw,fill=light]| \textcolor{black}{8} \\
  |[draw,fill=dark]| \textcolor{white}{5} &
  |[draw,fill=dark]| \textcolor{white}{4} &
  |[draw,fill=white]| 2 &
  |[draw,fill=white]| 3 &
  |[draw,fill=light]| \textcolor{black}{9} &
  |[draw,fill=light]| \textcolor{black}{8} &
  |[draw,fill=light]| \textcolor{black}{6} &
  |[draw,fill=white]| 1 &
  |[draw,fill=white]| 0 &
  |[draw,fill=light]| \textcolor{black}{7} \\
  |[draw,fill=light]| \textcolor{black}{6} &
  |[draw,fill=light]| \textcolor{black}{7} &
  |[draw,fill=dark]| \textcolor{white}{8} &
  |[draw,fill=dark]| \textcolor{white}{9} &
  |[draw,fill=white]| 1 &
  |[draw,fill=white]| 0 &
  |[draw,fill=light]| \textcolor{black}{4} &
  |[draw,fill=light]| \textcolor{black}{5} &
  |[draw,fill=white]| 3 &
  |[draw,fill=white]| 2 \\
  |[draw,fill=light]| \textcolor{black}{8} &
  |[draw,fill=white]| 3 &
  |[draw,fill=light]| \textcolor{black}{6} &
  |[draw,fill=white]| 1 &
  |[draw,fill=dark]| \textcolor{white}{4} &
  |[draw,fill=dark]| \textcolor{white}{5} &
  |[draw,fill=light]| \textcolor{black}{7} &
  |[draw,fill=light]| \textcolor{black}{9} &
  |[draw,fill=white]| 2 &
  |[draw,fill=white]| 0 \\
  |[draw,fill=light]| \textcolor{black}{9} &
  |[draw,fill=dark]| \textcolor{white}{8} &
  |[draw,fill=dark]| \textcolor{white}{4} &
  |[draw,fill=dark]| \textcolor{white}{6} &
  |[draw,fill=white]| 3 &
  |[draw,fill=dark]| \textcolor{white}{7} &
  |[draw,fill=white]| 0 &
  |[draw,fill=white]| 2 &
  |[draw,fill=light]| \textcolor{black}{5} &
  |[draw,fill=white]| 1 \\
  };\end{tikzpicture}
  \end{minipage}
  \hfill
  \begin{minipage}[c][1\width]{0.44\textwidth}
  \centering
  \begin{tikzpicture}
  \matrix (m) [matrix of nodes,nodes={element},column sep=-\pgflinewidth, row sep=-\pgflinewidth, label={[font=\large]above: type X}]{
  |[draw,fill=light]| \textcolor{black}{6} &
  |[draw,fill=white]| 1 &
  |[draw,fill=light]| \textcolor{black}{7} &
  |[draw,fill=white]| 2 &
  |[draw,fill=white]| 0 &
  |[draw,fill=light]| \textcolor{black}{8} &
  |[draw,fill=white]| 3 &
  |[draw,fill=light]| \textcolor{black}{9} &
  |[draw,fill=light]| \textcolor{black}{5} &
  |[draw,fill=light]| \textcolor{black}{4} \\
  |[draw,fill=light]| \textcolor{black}{7} &
  |[draw,fill=white]| 3 &
  |[draw,fill=white]| 0 &
  |[draw,fill=light]| \textcolor{black}{5} &
  |[draw,fill=light]| \textcolor{black}{9} &
  |[draw,fill=white]| 1 &
  |[draw,fill=light]| \textcolor{black}{4} &
  |[draw,fill=white]| 2 &
  |[draw,fill=light]| \textcolor{black}{8} &
  |[draw,fill=light]| \textcolor{black}{6} \\
  |[draw,fill=light]| \textcolor{black}{8} &
  |[draw,fill=light]| \textcolor{black}{7} &
  |[draw,fill=white]| 2 &
  |[draw,fill=white]| 0 &
  |[draw,fill=white]| 3 &
  |[draw,fill=light]| \textcolor{black}{4} &
  |[draw,fill=light]| \textcolor{black}{5} &
  |[draw,fill=light]| \textcolor{black}{6} &
  |[draw,fill=white]| 1 &
  |[draw,fill=light]| \textcolor{black}{9} \\
  |[draw,fill=light]| \textcolor{black}{9} &
  |[draw,fill=white]| 0 &
  |[draw,fill=light]| \textcolor{black}{5} &
  |[draw,fill=white]| 1 &
  |[draw,fill=light]| \textcolor{black}{8} &
  |[draw,fill=white]| 3 &
  |[draw,fill=light]| \textcolor{black}{6} &
  |[draw,fill=light]| \textcolor{black}{7} &
  |[draw,fill=light]| \textcolor{black}{4} &
  |[draw,fill=white]| 2 \\
  |[draw,fill=white]| 0 &
  |[draw,fill=white]| 2 &
  |[draw,fill=dark]| \textcolor{white}{4} &
  |[draw,fill=light]| \textcolor{black}{8} &
  |[draw,fill=dark]| \textcolor{white}{6} &
  |[draw,fill=light]| \textcolor{black}{9} &
  |[draw,fill=light]| \textcolor{black}{7} &
  |[draw,fill=white]| 1 &
  |[draw,fill=white]| 3 &
  |[draw,fill=light]| \textcolor{black}{5} \\
  |[draw,fill=white]| 1 &
  |[draw,fill=light]| \textcolor{black}{6} &
  |[draw,fill=dark]| \textcolor{white}{8} &
  |[draw,fill=light]| \textcolor{black}{4} &
  |[draw,fill=white]| 2 &
  |[draw,fill=dark]| \textcolor{white}{5} &
  |[draw,fill=white]| 0 &
  |[draw,fill=white]| 3 &
  |[draw,fill=light]| \textcolor{black}{9} &
  |[draw,fill=light]| \textcolor{black}{7} \\
  |[draw,fill=white]| 2 &
  |[draw,fill=dark]| \textcolor{white}{8} &
  |[draw,fill=white]| 1 &
  |[draw,fill=light]| \textcolor{black}{7} &
  |[draw,fill=dark]| \textcolor{white}{4} &
  |[draw,fill=light]| \textcolor{black}{6} &
  |[draw,fill=light]| \textcolor{black}{9} &
  |[draw,fill=light]| \textcolor{black}{5} &
  |[draw,fill=white]| 0 &
  |[draw,fill=white]| 3 \\
  |[draw,fill=white]| 3 &
  |[draw,fill=dark]| \textcolor{white}{4} &
  |[draw,fill=light]| \textcolor{black}{6} &
  |[draw,fill=dark]| \textcolor{white}{9} &
  |[draw,fill=light]| \textcolor{black}{5} &
  |[draw,fill=white]| 2 &
  |[draw,fill=light]| \textcolor{black}{8} &
  |[draw,fill=white]| 0 &
  |[draw,fill=light]| \textcolor{black}{7} &
  |[draw,fill=white]| 1 \\
  |[draw,fill=dark]| \textcolor{white}{4} &
  |[draw,fill=light]| \textcolor{black}{5} &
  |[draw,fill=light]| \textcolor{black}{9} &
  |[draw,fill=white]| 3 &
  |[draw,fill=white]| 1 &
  |[draw,fill=dark]| \textcolor{white}{7} &
  |[draw,fill=white]| 2 &
  |[draw,fill=light]| \textcolor{black}{8} &
  |[draw,fill=light]| \textcolor{black}{6} &
  |[draw,fill=white]| 0 \\
  |[draw,fill=dark]| \textcolor{white}{5} &
  |[draw,fill=light]| \textcolor{black}{9} &
  |[draw,fill=white]| 3 &
  |[draw,fill=dark]| \textcolor{white}{6} &
  |[draw,fill=light]| \textcolor{black}{7} &
  |[draw,fill=white]| 0 &
  |[draw,fill=white]| 1 &
  |[draw,fill=light]| \textcolor{black}{4} &
  |[draw,fill=white]| 2 &
  |[draw,fill=light]| \textcolor{black}{8} \\
  };\end{tikzpicture}
  \end{minipage}
\end{figure}
\begin{figure}[ht]
  \begin{minipage}[c][1\width]{0.44\textwidth}
  \centering
  \begin{tikzpicture}
  \matrix (m) [matrix of nodes,nodes={element},column sep=-\pgflinewidth, row sep=-\pgflinewidth, label={[font=\large]above: type X}]{
  |[draw,fill=white]| 0 &
  |[draw,fill=white]| 1 &
  |[draw,fill=white]| 3 &
  |[draw,fill=light]| \textcolor{black}{4} &
  |[draw,fill=light]| \textcolor{black}{5} &
  |[draw,fill=light]| \textcolor{black}{6} &
  |[draw,fill=white]| 2 &
  |[draw,fill=light]| \textcolor{black}{7} &
  |[draw,fill=light]| \textcolor{black}{8} &
  |[draw,fill=light]| \textcolor{black}{9} \\
  |[draw,fill=white]| 1 &
  |[draw,fill=light]| \textcolor{black}{7} &
  |[draw,fill=light]| \textcolor{black}{6} &
  |[draw,fill=white]| 3 &
  |[draw,fill=white]| 2 &
  |[draw,fill=light]| \textcolor{black}{8} &
  |[draw,fill=light]| \textcolor{black}{5} &
  |[draw,fill=light]| \textcolor{black}{9} &
  |[draw,fill=light]| \textcolor{black}{4} &
  |[draw,fill=white]| 0 \\
  |[draw,fill=white]| 2 &
  |[draw,fill=light]| \textcolor{black}{9} &
  |[draw,fill=white]| 1 &
  |[draw,fill=light]| \textcolor{black}{5} &
  |[draw,fill=light]| \textcolor{black}{6} &
  |[draw,fill=white]| 3 &
  |[draw,fill=light]| \textcolor{black}{7} &
  |[draw,fill=light]| \textcolor{black}{8} &
  |[draw,fill=white]| 0 &
  |[draw,fill=light]| \textcolor{black}{4} \\
  |[draw,fill=light]| \textcolor{black}{6} &
  |[draw,fill=white]| 2 &
  |[draw,fill=light]| \textcolor{black}{9} &
  |[draw,fill=light]| \textcolor{black}{8} &
  |[draw,fill=white]| 3 &
  |[draw,fill=white]| 1 &
  |[draw,fill=light]| \textcolor{black}{4} &
  |[draw,fill=white]| 0 &
  |[draw,fill=light]| \textcolor{black}{7} &
  |[draw,fill=light]| \textcolor{black}{5} \\
  |[draw,fill=white]| 3 &
  |[draw,fill=dark]| \textcolor{white}{6} &
  |[draw,fill=white]| 0 &
  |[draw,fill=light]| \textcolor{black}{7} &
  |[draw,fill=dark]| \textcolor{white}{8} &
  |[draw,fill=light]| \textcolor{black}{4} &
  |[draw,fill=light]| \textcolor{black}{9} &
  |[draw,fill=white]| 2 &
  |[draw,fill=light]| \textcolor{black}{5} &
  |[draw,fill=white]| 1 \\
  |[draw,fill=light]| \textcolor{black}{4} &
  |[draw,fill=dark]| \textcolor{white}{5} &
  |[draw,fill=white]| 2 &
  |[draw,fill=dark]| \textcolor{white}{9} &
  |[draw,fill=light]| \textcolor{black}{7} &
  |[draw,fill=white]| 0 &
  |[draw,fill=white]| 3 &
  |[draw,fill=white]| 1 &
  |[draw,fill=light]| \textcolor{black}{6} &
  |[draw,fill=light]| \textcolor{black}{8} \\
  |[draw,fill=light]| \textcolor{black}{5} &
  |[draw,fill=light]| \textcolor{black}{8} &
  |[draw,fill=dark]| \textcolor{white}{4} &
  |[draw,fill=white]| 0 &
  |[draw,fill=white]| 1 &
  |[draw,fill=dark]| \textcolor{white}{7} &
  |[draw,fill=light]| \textcolor{black}{6} &
  |[draw,fill=white]| 3 &
  |[draw,fill=light]| \textcolor{black}{9} &
  |[draw,fill=white]| 2 \\
  |[draw,fill=dark]| \textcolor{white}{7} &
  |[draw,fill=white]| 0 &
  |[draw,fill=light]| \textcolor{black}{5} &
  |[draw,fill=white]| 2 &
  |[draw,fill=light]| \textcolor{black}{4} &
  |[draw,fill=dark]| \textcolor{white}{9} &
  |[draw,fill=light]| \textcolor{black}{8} &
  |[draw,fill=light]| \textcolor{black}{6} &
  |[draw,fill=white]| 1 &
  |[draw,fill=white]| 3 \\
  |[draw,fill=light]| \textcolor{black}{8} &
  |[draw,fill=light]| \textcolor{black}{4} &
  |[draw,fill=dark]| \textcolor{white}{7} &
  |[draw,fill=white]| 1 &
  |[draw,fill=dark]| \textcolor{white}{9} &
  |[draw,fill=white]| 2 &
  |[draw,fill=white]| 0 &
  |[draw,fill=light]| \textcolor{black}{5} &
  |[draw,fill=white]| 3 &
  |[draw,fill=light]| \textcolor{black}{6} \\
  |[draw,fill=dark]| \textcolor{white}{9} &
  |[draw,fill=white]| 3 &
  |[draw,fill=light]| \textcolor{black}{8} &
  |[draw,fill=dark]| \textcolor{white}{6} &
  |[draw,fill=white]| 0 &
  |[draw,fill=light]| \textcolor{black}{5} &
  |[draw,fill=white]| 1 &
  |[draw,fill=light]| \textcolor{black}{4} &
  |[draw,fill=white]| 2 &
  |[draw,fill=light]| \textcolor{black}{7} \\
  };\end{tikzpicture}
  \end{minipage}
  \hfill
  \begin{minipage}[c][1\width]{0.44\textwidth}
  \centering
  \begin{tikzpicture}
  \matrix (m) [matrix of nodes,nodes={element},column sep=-\pgflinewidth, row sep=-\pgflinewidth, label={[font=\large]above: type X}]{
  |[draw,fill=white]| 0 &
  |[draw,fill=light]| \textcolor{black}{7} &
  |[draw,fill=white]| 2 &
  |[draw,fill=light]| \textcolor{black}{8} &
  |[draw,fill=white]| 1 &
  |[draw,fill=light]| \textcolor{black}{5} &
  |[draw,fill=light]| \textcolor{black}{9} &
  |[draw,fill=light]| \textcolor{black}{6} &
  |[draw,fill=white]| 3 &
  |[draw,fill=light]| \textcolor{black}{4} \\
  |[draw,fill=white]| 3 &
  |[draw,fill=white]| 2 &
  |[draw,fill=light]| \textcolor{black}{5} &
  |[draw,fill=light]| \textcolor{black}{4} &
  |[draw,fill=white]| 0 &
  |[draw,fill=light]| \textcolor{black}{8} &
  |[draw,fill=light]| \textcolor{black}{7} &
  |[draw,fill=white]| 1 &
  |[draw,fill=light]| \textcolor{black}{9} &
  |[draw,fill=light]| \textcolor{black}{6} \\
  |[draw,fill=light]| \textcolor{black}{6} &
  |[draw,fill=light]| \textcolor{black}{8} &
  |[draw,fill=white]| 0 &
  |[draw,fill=white]| 2 &
  |[draw,fill=light]| \textcolor{black}{7} &
  |[draw,fill=white]| 3 &
  |[draw,fill=white]| 1 &
  |[draw,fill=light]| \textcolor{black}{5} &
  |[draw,fill=light]| \textcolor{black}{4} &
  |[draw,fill=light]| \textcolor{black}{9} \\
  |[draw,fill=light]| \textcolor{black}{8} &
  |[draw,fill=light]| \textcolor{black}{9} &
  |[draw,fill=white]| 3 &
  |[draw,fill=white]| 0 &
  |[draw,fill=light]| \textcolor{black}{4} &
  |[draw,fill=white]| 1 &
  |[draw,fill=light]| \textcolor{black}{5} &
  |[draw,fill=white]| 2 &
  |[draw,fill=light]| \textcolor{black}{6} &
  |[draw,fill=light]| \textcolor{black}{7} \\
  |[draw,fill=white]| 1 &
  |[draw,fill=white]| 0 &
  |[draw,fill=dark]| \textcolor{white}{4} &
  |[draw,fill=light]| \textcolor{black}{7} &
  |[draw,fill=dark]| \textcolor{white}{9} &
  |[draw,fill=light]| \textcolor{black}{6} &
  |[draw,fill=white]| 3 &
  |[draw,fill=light]| \textcolor{black}{8} &
  |[draw,fill=white]| 2 &
  |[draw,fill=light]| \textcolor{black}{5} \\
  |[draw,fill=white]| 2 &
  |[draw,fill=dark]| \textcolor{white}{6} &
  |[draw,fill=light]| \textcolor{black}{9} &
  |[draw,fill=white]| 3 &
  |[draw,fill=light]| \textcolor{black}{5} &
  |[draw,fill=dark]| \textcolor{white}{7} &
  |[draw,fill=white]| 0 &
  |[draw,fill=light]| \textcolor{black}{4} &
  |[draw,fill=white]| 1 &
  |[draw,fill=light]| \textcolor{black}{8} \\
  |[draw,fill=light]| \textcolor{black}{4} &
  |[draw,fill=white]| 3 &
  |[draw,fill=dark]| \textcolor{white}{7} &
  |[draw,fill=light]| \textcolor{black}{5} &
  |[draw,fill=white]| 2 &
  |[draw,fill=dark]| \textcolor{white}{9} &
  |[draw,fill=light]| \textcolor{black}{6} &
  |[draw,fill=white]| 0 &
  |[draw,fill=light]| \textcolor{black}{8} &
  |[draw,fill=white]| 1 \\
  |[draw,fill=light]| \textcolor{black}{5} &
  |[draw,fill=light]| \textcolor{black}{4} &
  |[draw,fill=white]| 1 &
  |[draw,fill=dark]| \textcolor{white}{6} &
  |[draw,fill=dark]| \textcolor{white}{8} &
  |[draw,fill=white]| 0 &
  |[draw,fill=white]| 2 &
  |[draw,fill=light]| \textcolor{black}{9} &
  |[draw,fill=light]| \textcolor{black}{7} &
  |[draw,fill=white]| 3 \\
  |[draw,fill=dark]| \textcolor{white}{7} &
  |[draw,fill=white]| 1 &
  |[draw,fill=light]| \textcolor{black}{8} &
  |[draw,fill=dark]| \textcolor{white}{9} &
  |[draw,fill=light]| \textcolor{black}{6} &
  |[draw,fill=white]| 2 &
  |[draw,fill=light]| \textcolor{black}{4} &
  |[draw,fill=white]| 3 &
  |[draw,fill=light]| \textcolor{black}{5} &
  |[draw,fill=white]| 0 \\
  |[draw,fill=dark]| \textcolor{white}{9} &
  |[draw,fill=dark]| \textcolor{white}{5} &
  |[draw,fill=light]| \textcolor{black}{6} &
  |[draw,fill=white]| 1 &
  |[draw,fill=white]| 3 &
  |[draw,fill=light]| \textcolor{black}{4} &
  |[draw,fill=light]| \textcolor{black}{8} &
  |[draw,fill=light]| \textcolor{black}{7} &
  |[draw,fill=white]| 0 &
  |[draw,fill=white]| 2 \\
  };\end{tikzpicture}
  \end{minipage}
\end{figure}

\end{document}